\documentclass[12pt]{amsart}

\usepackage{amsmath,amssymb,amsfonts}
\usepackage{mathrsfs}
\usepackage{mathtools}
\usepackage{enumitem}
\usepackage{xcolor}
\usepackage{tikz-cd}
\usepackage{hyperref}
\usepackage[alphabetic,msc-links]{amsrefs}
\hypersetup{
  colorlinks=false,
  pdfborder={0 0 0.7},
  pdfborderstyle={/S/D/D[3 2]/W 0.7},
  linkbordercolor={0.10 0.52 0.28},
  citebordercolor={0.04 0.45 0.22},
  urlbordercolor={0.00 0.42 0.30},
  pdftitle={McKean Rigidity for Cocompact Negatively Curved Manifolds and the p-Laplacian},
  pdfauthor={Kuntao Jin, Shing-Tung Yau, and Bo Zhu},
  pdfsubject={Spectral rigidity under an upper sectional-curvature bound},
  pdfkeywords={McKean inequality, p-Laplacian, bottom spectrum, Busemann function, leafwise diffusion}
}

\usepackage{geometry}
\newtheorem{theorem}{Theorem}[section]

\newtheorem{proposition}[theorem]{Proposition}
\newtheorem{lemma}[theorem]{Lemma}

\newtheorem{problem}[theorem]{Problem}

\newtheorem*{acknowledgements}{Acknowledgements}

\newtheorem*{hilleyosida}{Hille--Yosida theorem (contraction form)}
\theoremstyle{definition}
\newtheorem{definition}[theorem]{Definition}
\newtheorem{example}[theorem]{Example}
\newtheorem{remark}[theorem]{Remark}
\newtheorem*{unnumberedremark}{Remark}

\numberwithin{equation}{section}

\newcommand{\ee}{{\mathrm{e}}}

\newcommand{\mc}{\mathcal}
\newcommand{\ms}{\mathscr}

\newcommand{\wti}{\widetilde}
\newcommand{\mr}{\mathrm}

\newcommand{\R}{\mathbb{R}}
\newcommand{\bH}{\mathbb{H}}
\newcommand{\supp}{\mathrm{supp}}
\newcommand{\vol}{\operatorname{vol}}
\newcommand{\area}{\operatorname{area}}
\newcommand{\tr}{\mathrm{tr}}
\DeclareMathOperator{\Ric}{Ric}

\begin{document}

\title[McKean rigidity]{McKean Rigidity for Cocompact Negatively Curved
Manifolds and the \(p\)-Laplacian}
\date{\today}

\author{Kuntao Jin}
\address[Kuntao Jin]{Department of Mathematical Sciences, Tsinghua University}
\email{jkt25@mails.tsinghua.edu.cn}

\author{Bo Zhu}
\address[Bo Zhu]{Yau Mathematical Sciences Center,
Tsinghua University}
\email{zhub@tsinghua.edu.cn}
\thanks{Bo Zhu is supported by NSFC~12501066.}

\subjclass[2020]{Primary 53C24, 58J50; Secondary 35J92, 58J65}
\keywords{\(p\)-Laplacian, McKean inequality, bottom spectrum, Busemann function,
horospherical suspension, leafwise diffusion, spectral rigidity}

\begin{abstract}
Let \((M^m,g)\) be a closed Riemannian manifold with
\(\sec_g\leq-1\).  We prove that the bottom spectrum of its universal
cover attains McKean's lower bound if and only if the universal cover is
hyperbolic space of constant sectional curvature \(-1\).  More generally,
for every \(1<p<\infty\), the variational \(p\)-fundamental tone satisfies
\[
  \lambda_{1,p}(\wti M)
  \geq\left(\frac{m-1}{p}\right)^p,
\]
and equality for some \(p\in(1,\infty)\) holds if and only if
\(\wti M\cong\bH^m(-1)\).  In that case, equality holds for every
\(p\in(1,\infty)\).  The proof converts the two McKean defects of a
minimizing sequence into a stationary probability measure on the compact
horospherical suspension; heat-kernel positivity then forces its
zero-defect support to contain a complete leaf. 
\end{abstract}
\maketitle
\section{Introduction}
Let \((X^m,g_X)\) be a complete Riemannian manifold, where \(m\geq2\).  Its
bottom spectrum is
\begin{equation}\label{eq:bottom-spectrum}
  \lambda_1(X)
  =\inf_{0\neq u\in C_c^\infty(X)}
  \frac{\int_X|\nabla u|^2\,dV}
       {\int_Xu^2\,dV}.
\end{equation}
If \(X\) is simply connected and \(\sec_{g_X}\leq-1\), McKean proved the
following sharp estimate (see \cite{McKean1970}, pp.~360--365):
\begin{equation}\label{eq:mckean-bound}
  \lambda_1(X)
  \geq\frac{(m-1)^2}{4}
  =\lambda_1\bigl(\bH^m(-1)\bigr).
\end{equation}
For a general complete, simply connected manifold, however, equality in
McKean's estimate is not rigid: the bottom spectrum alone does
not determine the geometry of \(X\).  The examples in
Subsection~\ref{subsec:role-cocompactness} explain the two points at which
cocompactness enters the rigidity argument.

\medskip
We prove rigidity in the cocompact setting: \(X=\wti M\) is the universal
Riemannian cover of a closed manifold.

\begin{theorem}\label{thm:main}
Let \((M^m,g)\) be a closed connected Riemannian manifold of dimension
\(m\geq2\) such that
\(
  \sec_g\leq-1.
\)
Then
\[
  \lambda_1(\wti M)\geq\frac{(m-1)^2}{4}.
\]
Equality holds if and only if
\[
  (\wti M,\wti g)\cong\bH^m(-1).
\]
\end{theorem}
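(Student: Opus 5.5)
The inequality is McKean's, and we recover it from Busemann functions. Fix a point \(\xi\) of the ideal boundary \(\partial_\infty\wti M\) and let \(b=b_\xi\) be the Busemann function. Then \(|\nabla b|=1\), and Rauch/Hessian comparison under \(\sec\le-1\) gives \(\Delta b\geq m-1\) in the barrier sense; in fact \(b\) is \(C^2\) here. For \(u\in C_c^\infty\), integrate \(\operatorname{div}(u^2\nabla b)\):
\[
(m-1)\int u^2\le\int u^2\Delta b=-2\int u\langle\nabla u,\nabla b\rangle\le 2\Bigl(\int u^2\Bigr)^{1/2}\Bigl(\int|\nabla u|^2\Bigr)^{1/2}.
\]
This already yields \(\lambda_1\ge(m-1)^2/4\). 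It also identifies two nonnegative defects for each \(u\):
\[
D_1(u)=\int u^2\,(\Delta b-(m-1)), \qquad D_2(u)=\int\bigl|\nabla u+\tfrac{m-1}{2}u\nabla b\bigr|^2 .
\]
The second measures the failure of equality in Cauchy--Schwarz. Expanding,
\[
\int|\nabla u|^2-\tfrac{(m-1)^2}{4}\int u^2=D_2(u)+\tfrac{m-1}{2}D_1(u).
\]
The converse direction of the equivalence is immediate: \(\lambda_1(\bH^m(-1))=(m-1)^2/4\).

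Now assume \(\lambda_1(\wti M)=(m-1)^2/4\), and take a minimizing sequence \(u_k\) with \(\|u_k\|_2=1\). Then \(D_1(u_k)+D_2(u_k)\to0\) for every fixed \(\xi\); averaging over \(\xi\) is not even needed. The plan is to pass to the compact horospherical suspension. This is the space \(SM_\infty=\Gamma\backslash(\wti M\times\partial_\infty\wti M)\), foliated by leaves \(\wti M\times\{\xi\}\), with \(b_\xi\) defining a leafwise drift. It is compact because \(M\) is closed, which is where cocompactness enters.

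Push forward the probability measures \(u_k^2\,dV\otimes\nu\) (for a suitable family of measures \(\nu\) on the boundary), made \(\Gamma\)-periodic by summing over translates, to the compact space. Extract a weak limit \(\mu\). Then:
\begin{enumerate}[label=(\roman*)]
\item Vanishing of \(D_1\) gives \(\int(\Delta b-(m-1))\,d\mu=0\), so \(\mu\) is supported on the closed set \(Z=\{\Delta_{\mathrm{leaf}}b=m-1\}\), the zero-defect set.
\item Vanishing of \(D_2\) says \(\nabla u_k\approx-\tfrac{m-1}{2}u_k\nabla b\). Testing against leafwise functions \(f\) should give
\[
\int\bigl(\Delta_{\mathrm{leaf}}f-(m-1)\langle\nabla b,\nabla f\rangle\bigr)\,d\mu=0 ,
\]
or an analogous identity for the generator \(L=\Delta-(m-1)\partial_{\nabla b}\) or its adjoint. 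In other words, \(\mu\) is stationary for the leafwise diffusion with generator \(L\).
\end{enumerate}

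The main obstacle is the next step: showing that \(\mu\) charges a whole leaf. Stationarity gives \(\int P_tf\,d\mu=\int f\,d\mu\) for the leafwise heat semigroup \(P_t\). The leafwise heat kernel \(p_t(x,y)\) is strictly positive for all \(y\) in the leaf. So if \(f\ge0\) vanishes on \(Z\), then \(\int P_tf\,d\mu=0\), hence \(P_tf=0\) on \(\supp\mu\), and positivity forces \(f=0\) on the entire leaf through any point of \(\supp\mu\). Choosing \(f=\Delta b-(m-1)\) (or a continuous cutoff of it), we conclude that some leaf lies in \(Z\).

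The delicate parts are the regularity of \(\Delta b\) as a function on the suspension (continuity in \(\xi\)) and making the stationarity limit rigorous given only \(L^2\)-smallness of the defects. For the latter I would use the identity \(\nabla(u_k^2)=2u_k\nabla u_k\) and control the error by Cauchy--Schwarz using \(D_2\to0\).

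Finally, once a leaf \(\wti M\times\{\xi\}\) lies in \(Z\), every horosphere centered at \(\xi\) has mean curvature exactly \(m-1\). Under \(\sec\le-1\) the shape operator satisfies \(\nabla^2b\ge g-db\otimes db\), and the trace equality forces \(\nabla^2b=g-db\otimes db\) everywhere. The Riccati equation along the gradient lines then forces \(R(\cdot,\nabla b)\nabla b=-(\cdot)\). The metric becomes \(dt^2+e^{2t}h\) with \(h\) flat, since the Gauss equation gives \(\sec\) of the horospheres equal to \(\sec+1\). The remaining step is to show the full curvature is \(-1\), i.e. that the mixed planes tangent to horospheres have curvature \(-1\). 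For this, use density of the leaf: \(\Gamma\)-translates, and hence leaves in the closure of the orbit, which by minimality of the \(\Gamma\)-action on \(\partial_\infty\wti M\) is everything. By continuity, \(Z\) is then the whole suspension. So every Busemann function satisfies \(\nabla^2b=g-db\otimes db\), every radial curvature is \(-1\), hence all sectional curvatures are \(-1\), and \(\wti M\cong\bH^m(-1)\).
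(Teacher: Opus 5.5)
Through the point where a whole leaf lies in the zero-defect set, your proposal follows the paper. You use the same square identity, and the same pushforward to the suspension. With $\nu=\delta_{\xi_0}$ no periodization is needed: just push $u_k^2\,dV$ forward by $x\mapsto[x,\xi_0]$. The stationarity computation via $\nabla(u_k^2)=2u_k\nabla u_k$ is the same, and so is the appeal to heat-kernel positivity. Applying stationarity directly to $f=D$ is a compressed form of the paper's leaf-saturation argument. Two facts you take for granted there are exactly what Section~\ref{sec:candel-semigroup-construction} supplies, via the core property and gradient bound for $\mc D_0(A)$ and the duplicated-leaf construction:
\begin{itemize}
\item $\int Lf\,d\mu=0$ on a test class implies $\int P_tf\,d\mu=\int f\,d\mu$;
\item $P_t$ on $C(Z)$ is given, on each possibly nonproper leaf, by a strictly positive kernel of total mass one.
\end{itemize}

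The genuine difference is the last step, and your route is correct.
\begin{itemize}
\item \textbf{The paper.} Lemma~\ref{lem:riemannian-rigidity} uses a single Busemann function. The Hessian identity gives $dt^2+\ee^{2t}g_H$, with tangential curvatures $\ee^{-2t}\sec_{g_H}-1$. Letting $t\to-\infty$ against the lower curvature bound forces $g_H$ to be flat.
\item \textbf{Your route.} You spread $D=0$ from one leaf to the whole suspension. By $\Gamma$-invariance, $\widetilde D(x,\gamma\xi_*)=\widetilde D(\gamma^{-1}x,\xi_*)=0$. The orbit $\Gamma\xi_*$ is dense in $\partial_\infty\wti M$ by minimality of the boundary action of a cocompact group, and $\widetilde D$ is jointly continuous. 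Hence every $B_\xi$ satisfies the Hessian identity. Every unit $w\in S_x\wti M$ equals $-\nabla B_\xi(x)$ for $\xi=c_w(+\infty)$, so the Riccati equation gives $R(Y,w)w=-Y$ for all $Y\perp w$. The curvature is therefore identically $-1$.
\item \textbf{What your route buys.} It bypasses the warped-product formulas and the completeness of $H$. It also proves a stronger intermediate fact: every horosphere is umbilic with mean curvature $m-1$. The lower curvature bound still enters, but only through the regularity and joint continuity in Proposition~\ref{prop:busemann-comparison}.
\item \textbf{What the paper's route buys.} It needs no boundary dynamics. It yields a group-free rigidity statement for one Busemann function, which is what lets the paper isolate the role of the lower bound in Subsection~\ref{subsec:role-cocompactness}.
\end{itemize}

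One sentence of yours is false and should be deleted. The Gauss equation gives only $\sec_h=\sec+1\leq0$, not flatness of $h$. The nonflat warped products in Subsection~\ref{subsec:role-cocompactness} satisfy $\sec\leq-1$. Were $h$ flat, your ``remaining step'' would be vacuous. Your conclusion rests on the density argument, not on that claim.
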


\medskip
The same rigidity holds for the variational \(p\)-Laplacian.  For a
complete Riemannian manifold \(X\) and \(1<p<\infty\), set
\[
  \lambda_{1,p}(X)
  :=
  \inf_{0\neq u\in C_c^\infty(X)}
  \frac{\|\nabla u\|_{L^p(X)}^p}
       {\|u\|_{L^p(X)}^p}.
\]
The sharp lower bound under \(\sec\leq-1\) is known
(see \cite{Poliquin2014}, Proposition~4.5, and
\cite{CarvalhoCavalcante2022}, Corollary~1.1).  Our concern is rigidity
in the equality case for universal covers of closed manifolds.

\begin{theorem}
\label{thm:p-rigidity}
Let \((M^m,g)\) be a closed connected Riemannian manifold of dimension
\(m\geq2\) satisfying \(\sec_g\leq-1\).  Then, for every
\(1<p<\infty\),
\[
  \lambda_{1,p}(\wti M)
  \geq\left(\frac{m-1}{p}\right)^p.
\]
Equality holds for some \(p\in(1,\infty)\) if and only if
\[
  (\wti M,\wti g)\cong\bH^m(-1).
\]
In this case, equality holds for every \(p\in(1,\infty)\).
\end{theorem}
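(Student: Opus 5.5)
The proof has two parts: the inequality, which is a Busemann-function calibration argument, and the rigidity, which follows the route sketched in the abstract. For the inequality, fix a point \(\xi\) of the ideal boundary and let \(b=b_\xi\) be the Busemann function. Under \(\sec\leq-1\), Hessian comparison gives \(\Delta b\geq m-1\) in the barrier sense, and \(|\nabla b|=1\). For \(u\in C_c^\infty(\wti M)\), test the vector field \(|u|^p\nabla b\) in the divergence theorem:
\[
  (m-1)\int|u|^p\leq\int|u|^p\Delta b
  =-p\int|u|^{p-2}u\,\langle\nabla u,\nabla b\rangle
  \leq p\,\|u\|_p^{p-1}\|\nabla u\|_p,
\]
using Hölder in the last step. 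Rearranging gives \(\|\nabla u\|_p^p\geq((m-1)/p)^p\|u\|_p^p\). The "if" direction and the final clause of the theorem follow from the classical computation \(\lambda_{1,p}(\bH^m)=((m-1)/p)^p\), obtained by testing the radial functions \(e^{-(m-1+\epsilon)r/p}\).

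For rigidity, assume equality for some \(p\) and take a normalized minimizing sequence \(u_k\) with \(\|u_k\|_p=1\). In the chain above there are exactly two defects:
\begin{itemize}
\item a \emph{Laplacian defect} \(\int|u_k|^p(\Delta b-(m-1))\geq0\), which is a nonnegative measure;
\item a \emph{Hölder/alignment defect}, measuring the failure of \(\nabla u_k\) to be parallel to \(-\nabla b\) together with the failure of \(|\nabla u_k|^p\) to be proportional to \(|u_k|^p\).
\end{itemize}
Both tend to zero along the sequence. The difficulty is that \(u_k\) may escape to infinity, so no limit exists on \(\wti M\). Cocompactness is used at this point: translate by deck transformations and lift to the horospherical suspension \(S\), the compact space of pairs \((x,\xi)\) modulo \(\pi_1(M)\), which is fibered over \(M\) with boundary fibers and carries the leaf foliation \(\{\xi=\text{const}\}\). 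The density \(|u_k|^p\), pushed to \(S\) and averaged over a suitable family of boundary points (for instance averaging \(\xi\) against harmonic or visual measures, or choosing \(\xi\) adapted to \(u_k\)), yields probability measures \(\mu_k\) on \(S\). By compactness, a subsequence has a weak limit \(\mu\).

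The identities satisfied by the \(u_k\), namely the Euler–Lagrange relation along \(-\nabla b\) with vanishing alignment defect, should pass to the limit and show that \(\mu\) is stationary for the leafwise diffusion generated by \(\Delta_{\text{leaf}}-(m-1)\partial_b\) or its \(p\)-analogue. The vanishing Laplacian defect shows that \(\mu\) is supported in the closed set \(Z=\{\Delta b=m-1\}\), where lower semicontinuity of \(\Delta b\) on \(S\) makes the zero-defect set closed. Heat-kernel positivity of the leafwise diffusion then shows that the support of a stationary measure is leafwise saturated, so \(Z\) contains a complete leaf \(\{(x,\xi_0):x\in\wti M\}\). Along that leaf, the horospheres centered at \(\xi_0\) have mean curvature exactly \(m-1\). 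Combined with \(\sec\leq-1\), whose Riccati comparison gives every principal curvature at least \(1\), this forces all principal curvatures to equal \(1\). The Riccati equation then gives \(\sec(\nabla b,\cdot)\equiv-1\), and this holds for every point \(x\), not only near one.

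To finish, I would upgrade this to constant curvature \(-1\). One option is to use the density of the \(\pi_1\)-orbit of \(\xi_0\), which by minimality of the boundary action makes the zero-defect set invariant. Then \(\Delta b_\xi\equiv m-1\) for all \(\xi\), by closedness. Hence every horosphere is umbilic with principal curvatures \(1\), all mixed curvatures equal \(-1\), and by varying \(\xi\) one obtains \(\sec\equiv-1\). Simple connectivity then gives \(\wti M\cong\bH^m(-1)\).

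The main obstacle is the middle step: constructing \(\mu\) so that it is genuinely stationary for the leafwise operator and so that the \(p\)-nonlinearity, through terms like \(|u|^{p-2}u\), passes to the weak limit. I would first handle this by working with \(v_k=|u_k|^{p/2}\), which is a near-minimizer for a linear drift-Laplacian quadratic form, and then obtain stationarity from the vanishing defects via an integration-by-parts identity tested against leafwise smooth functions on \(S\).
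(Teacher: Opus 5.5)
Your lower bound (divergence of $|u|^p\nabla B_\xi$ plus H\"older) and your converse via $e^{-(m-1+\epsilon)r/p}$ are sound. Your rigidity architecture also follows the paper's structure: push $|u_k|^p\,dV$ to the compact suspension, confine the weak limit to the zero set of $\Delta B_\xi-(m-1)$, then use stationarity and heat-kernel positivity to saturate a leaf. The genuine gap is the step you flag yourself. You never obtain the estimate that makes the weak limit stationary, and the device you propose does not supply it for all $p$.

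With $v_k=|u_k|^{p/2}$, H\"older gives $\int|\nabla v_k|^2\,dV=\frac{p^2}{4}\int|u_k|^{p-2}|\nabla u_k|^2\,dV\leq\frac{p^2}{4}\|\nabla u_k\|_{L^p}^2\to\frac{(m-1)^2}{4}$ when $p\geq2$. For those $p$, $v_k$ is a quadratic near-minimizer, so your device even reduces the statement to the quadratic theorem. For $1<p<2$ the inequality runs the other way. The weight $|u_k|^{p-2}$ blows up where $u_k$ is small, so $\int|u_k|^{p-2}|\nabla u_k|^2\,dV$ is not controlled by the $p$-energy. Nothing then forces the quadratic first defect of $v_k$ to vanish. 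Your hedge ``or its $p$-analogue'' also shows that the limiting operator has not been pinned down.

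The missing idea is that you need neither a quadratic near-minimizer nor a nonlinear operator. The pairing argument only uses the $L^1$ transport estimate for the density $\rho_k=u_k^p$ (you may take $u_k\geq0$):
\[
  \bigl\|\nabla\rho_k+(m-1)\rho_k\nabla B_{\xi_0}\bigr\|_{L^1}\longrightarrow0 .
\]
To get it, you must combine your two defects through uniform convexity. The paper writes the spectral excess exactly as $\int\mathcal G_p(\nabla u,-\alpha u\nabla B)\,dV$ plus the Laplacian defect, where $\mathcal G_p$ is the Bregman divergence of $|\cdot|^p$ and $\alpha=(m-1)/p$. Its uniform-convexity lower bound, with an extra H\"older step when $1<p<2$, gives $\|\nabla u_k+\alpha u_k\nabla B\|_{L^p}\to0$. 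Then $\nabla\rho_k+(m-1)\rho_k\nabla B=pu_k^{p-1}(\nabla u_k+\alpha u_k\nabla B)$, and H\"older yields the $L^1$ bound.

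Because $p\alpha=m-1$, the identity $\int_ZLf\,d\mu_k=-\int\langle\nabla\rho_k+(m-1)\rho_k\nabla B,\nabla f_0\rangle\,dV$ makes $\mu$ stationary for the same linear operator $\Delta^{\mathrm{leaf}}-(m-1)\langle V,\nabla^{\mathrm{leaf}}\cdot\rangle$ for every $p$. This requires that test functions in a core of the generator have bounded leafwise gradients, which is what the paper's gradient-control and core lemmas provide. No averaging over $\xi$ is needed, since the defect decomposition holds for each fixed $\xi_0$.

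Your endgame, by contrast, is a valid alternative to the paper's. The closed saturated support contains $\mathcal L_{\xi_*}$, which is dense in the suspension because $\Gamma\xi_*$ is dense in $\partial_\infty\wti M$. Hence $\Delta B_\xi\equiv m-1$ for every $\xi$, and the Riccati equation gives $\sec\equiv-1$ directly. The paper instead uses a single Busemann function: it obtains the warped product $dt^2+e^{2t}g_H$ and uses the lower curvature bound to force $g_H$ flat. That is a purely Riemannian lemma, whereas your route relies on the boundary dynamics of $\Gamma$.
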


To the best of our knowledge, neither the quadratic equality-rigidity
statement under the upper curvature bound \(\sec_g\leq-1\) nor its
\(p\)-Laplacian extension was previously known.  In this paper, we first
treat the quadratic case \(p=2\), where the square identity separates the
two McKean defects.  For general \(p\), uniform convexity of
\(Y\mapsto|Y|^p\) yields, for a normalized \(p\)-minimizing sequence
\(u_j\) and \(\rho_j=u_j^p\),
\[
  \left\|
    \nabla\rho_j+(m-1)\rho_j\nabla B
  \right\|_{L^1}
  \longrightarrow0.
\]
The coefficient \(m-1\) is independent of \(p\); hence the limiting measure
is stationary for the same leafwise diffusion as in the quadratic case,
and the geometric and dynamical parts of the argument are unchanged.  The
nonlinear defect calculation and the proof of
Theorem~\ref{thm:p-rigidity} are deferred to
Appendix~\ref{app:p-rigidity}.

\bigskip
\subsection*{Related rigidity results under curvature bounds}

For a complete Riemannian manifold \((X^m,g)\) with
\(\Ric_g\geq-(m-1)g\), Shiu-Yuen Cheng proved the sharp estimate
(see \cite{Cheng1975}):
\(\lambda_1(X)\leq (m-1)^2/4\). When $m\geq 3$,
Peter Li and Jiaping Wang analyzed the equality
case on general complete manifolds by harmonic-function methods; their
splitting theorems identify explicit warped products as the only equality
cases with more than one end (see \cite{LiWang2001} and
\cite{LiWang2002}).  Thus equality alone does not imply hyperbolicity in
the noncocompact setting.  For universal covers of closed manifolds,
however, Xiaodong Wang proved full rigidity using Martin-boundary harmonic
functions and Kaimanovich entropy: if \((M^m,g)\) is closed,
\(\Ric_g\geq-(m-1)g\), and
\(\lambda_1(\wti M)=(m-1)^2/4\), then
\(\wti M\cong\bH^m(-1)\)
(see \cite{Wang2008}, Theorem~4).

Scalar-curvature analogues follow the same pattern.  In dimension three,
Munteanu and Jiaping Wang used harmonic-function theory to prove the sharp
estimate, normalized to the hyperbolic curvature scale \(-1\):
\(\operatorname{Sc}_X\geq-6\) implies \(\lambda_1(X)\leq 1\)
for a complete \(X^3\) with finitely many ends and finite first Betti
number (see \cite{MunteanuWang2024}, Theorem~1.1, and
\cite{MunteanuWang2026}, Theorem~1.1).\footnote{Munteanu and Wang assume a
Ricci lower bound in the first paper and remove this assumption in the
second (see \cite{MunteanuWang2024} and \cite{MunteanuWang2026}).} In
higher dimensions, Jinmin Wang and Bo Zhu proved the corresponding
sharp estimate, together with rigidity in the equality case, for universal
covers of closed manifolds.  If
\(M\) is rationally essential, \(\wti M\) is spin, and \(\pi_1(M)\)
satisfies the Strong Novikov Conjecture, then
\(\operatorname{Sc}_g\geq-m(m-1)\) implies
\(\lambda_1(\wti M)\leq(m-1)^2/4\), and equality forces
\((M,g)\) to be hyperbolic
(see \cite{WangZhu2024}, Theorem~1.2, and
\cite{WangZhu2026}, Theorem~1.2).\footnote{Wang and Zhu prove the sharp
estimate in the first paper and rigidity in the second
(see \cite{WangZhu2024} and \cite{WangZhu2026}).  Their rigidity argument
uses almost-harmonic spinors to
recover the Einstein identity
\(\Ric_g=-(m-1)g\), after which Xiaodong Wang's rigidity theorem applies.}

Related rigidity results begin with stronger horospherical assumptions.
Besson, Courtois, and Hersonsky obtain hyperbolicity from geometric
conditions on horospheres, while Zimmer proves rigidity under asymptotic
harmonicity, which requires every Busemann function to have the same
constant Laplacian (see
\cites{BessonCourtoisHersonsky2025Flat,
BessonCourtoisHersonsky2025Horospherical,Zimmer2012}).  By contrast,
spectral equality here initially yields only averaged defect limits.  The
stationary-measure argument produces a single
\(\xi_*\in\partial_\infty\wti M\) satisfying
\(\Delta B_{\xi_*}=m-1\), and under the present curvature bound this
identity suffices to prove hyperbolicity.

\subsection*{The invariant perspective: isoperimetry and entropy}

The preceding results formulate rigidity through curvature inequalities.
A complementary viewpoint uses two global geometric invariants: the
Cheeger constant and the volume entropy.
The relation between isoperimetric constants and the first eigenvalue was
further developed by Yau in the compact setting
(see \cite{Yau1975Isoperimetric}).

For a complete \(X\), and for a universal cover \(\wti M\) of a closed
manifold, set
\[
\begin{aligned}
  h_{\mathrm{iso}}(X)
  &:=
  \inf_{\Omega\Subset X}
  \frac{\area(\partial\Omega)}{\vol(\Omega)},\\
  h_{\mathrm{vol}}(\wti M)
  &:=
  \lim_{R\to\infty}\frac{1}{R}\log\vol B(o,R).
\end{aligned}
\]
Then Cheeger's inequality and Brooks's volume-growth
estimate give (see \cite{Cheeger1970} and \cite{Brooks1981})
\begin{equation}\label{eq:cheeger-spectrum-entropy-chain}
  h_{\mathrm{iso}}(\wti M)^2
  \leq 4\lambda_1(\wti M)
  \leq h_{\mathrm{vol}}(\wti M)^2.
\end{equation}

Under a Ricci lower bound \(\Ric_g\geq-(m-1)g\), Bishop--Gromov comparison
gives \(h_{\mathrm{vol}}(\wti M)\leq m-1\).  Hence equality in the sharp
spectral bound forces \(h_{\mathrm{vol}}(\wti M)=m-1\), and the entropy
rigidity theorem of Ledrappier--X.~Wang, with Gang Liu's short proof, yields
\(\wti M\cong\bH^m(-1)\)
(see \cite{LedrappierWang2010}, Theorem~2, and
\cite{Liu2011}, Theorem~1).

Under the sectional-curvature assumption of this paper, G\"unther's volume
comparison theorem gives the opposite inequality (see \cite{Gunther1960}):
\[
  \sec_g\leq-1
  \quad\Longrightarrow\quad
  h_{\mathrm{vol}}(\wti M)\geq m-1.
\]
Together with the right-hand inequality in
\eqref{eq:cheeger-spectrum-entropy-chain}, spectral equality again gives
only \(h_{\mathrm{vol}}(\wti M)\geq m-1\), which is precisely the
comparison estimate above.  Thus the spectral hypothesis supplies no
reverse volume-growth inequality, and the entropy route does not close
the argument.

Cheeger's inequality yields a necessary consequence of spectral equality,
but it does not by itself imply rigidity.  Indeed, for every relatively
compact smooth domain \(\Omega\subset\wti M\), Busemann comparison and the
divergence theorem give
\[
  (m-1)\vol(\Omega)
  \leq\int_\Omega\Delta B_\xi\,dV
  =\int_{\partial\Omega}\langle\nabla B_\xi,\nu\rangle\,dA
  \leq\area(\partial\Omega).
\]
Taking the infimum over \(\Omega\) yields
\(h_{\mathrm{iso}}(\wti M)\geq m-1\).  If spectral equality holds, Cheeger's
inequality gives \(h_{\mathrm{iso}}(\wti M)\leq m-1\), and therefore
\[
  h_{\mathrm{iso}}(\wti M)=m-1.
\]
For \(p=1\), define
\[
  \lambda_{1,1}(X)
  :=
  \inf_{0\neq u\in C_c^\infty(X)}
  \frac{\int_X|\nabla u|\,dV}
       {\int_X|u|\,dV}.
\]
The coarea formula and smooth approximation of characteristic functions
give
\[
  \lambda_{1,1}(X)=h_{\mathrm{iso}}(X).
\]

The proof of Theorem~\ref{thm:main} uses spectral minimizing sequences
directly.  Entropy equality is not assumed; it follows only after
hyperbolicity has been proved.

\subsection*{Outline of the proof}

Assume that \(\lambda_1(\wti M)=(m-1)^2/4\).  To prove
Theorem~\ref{thm:main}, it is enough to find
\(\xi_*\in\partial_\infty\wti M\) such that \(B=B_{\xi_*}\) satisfies
\begin{equation}\label{eq:busemann-conclusion}
  |\nabla B|=1,
  \quad
  \Delta B=m-1
\end{equation}
throughout \(\wti M\).  The unit-gradient identity is automatic for a
Busemann function; the task is therefore to obtain the constant-Laplacian
identity.  Busemann Hessian comparison then converts
\eqref{eq:busemann-conclusion} into hyperbolic rigidity.

The central point is that the two McKean defects control complementary
properties of a single limiting measure on the compact horospherical
suspension.  The first defect gives stationarity for a leafwise diffusion
with drift coefficient \(m-1\), while the second confines the support to
the zero set of the Busemann Laplacian defect.  Positivity of the leafwise
diffusion makes the stationary support leaf-saturated, thereby propagating
the Busemann equality along a complete leaf.  This stationary-measure
viewpoint is inspired by Ledrappier--X.~Wang
(see \cite{LedrappierWang2010}, Theorems~1 and~2).  The required diffusion
theory follows Candel's Hille--Yosida construction
(see \cite{Candel2003}, Theorem~4.14 and
Propositions~4.15--4.16); its detailed verification is deferred to
Section~\ref{sec:candel-semigroup-construction}, after the main rigidity
argument.

\begin{enumerate}[label=\textup{(\arabic*)}]
\item Section~\ref{sec:busemann-defects} derives the McKean square
identity and produces a normalized minimizing sequence \(u_j\) for which
both defects vanish; see \eqref{eq:first-defect} and
\eqref{eq:second-defect}.

\item Section~\ref{sec:suspension} introduces the compact horospherical
suspension
\[
  Z=(\wti M\times\partial_\infty\wti M)/\Gamma,
  \quad Z\cong SM.
\]
Its leaves are covered by \(\wti M\), and its compactness prevents the
quadratic mass of \(u_j\) from being lost at infinity.

\item In Section~\ref{sec:limiting-measure}, we push \(u_j^2dV\) to \(Z\) along
the leaf determined by \(\xi_0\).  A subsequence converges to a probability
measure \(\mu\).  For
\(D([x,\xi])=\Delta B_\xi(x)-(m-1)\), the second defect gives
\[
  \supp\mu\subseteq D^{-1}(0).
\]

\item In Section~\ref{sec:stationarity}, the first defect gives
\(\int_ZLf\,d\mu=0\) for the leafwise drift operator
\[
  L=\Delta^{\mathrm{leaf}}
    -(m-1)\langle V,\nabla^{\mathrm{leaf}}\,\cdot\,\rangle.
\]
The associated diffusion makes \(\mu\) stationary, and its leafwise
positivity makes \(\supp\mu\) leaf-saturated.  Since
\(\supp\mu\subseteq D^{-1}(0)\), one complete leaf satisfies \(D=0\);
hence \(\Delta B_{\xi_*}=m-1\) on \(\wti M\).

\item In Section~\ref{sec:rigidity}, equality in Busemann Hessian comparison
gives
\[
  \nabla^2B=\wti g-dB\otimes dB,
\]
so the gradient flow yields an exponential warped product.  The curvature
bound inherited from the compact quotient forces its cross-section to be
flat, and therefore \((\wti M,\wti g)\cong\bH^m(-1)\).

\item After the main proof, Section~\ref{sec:candel-semigroup-construction}
develops the leafwise semigroup used in
Section~\ref{sec:stationarity}.  It gives the construction at the finite
transverse regularity needed here, proves gradient control on the
classical domain and the core property of that domain, and establishes
leaf saturation of stationary supports.
\end{enumerate}

Appendix~\ref{app:p-rigidity} proves
Theorem~\ref{thm:p-rigidity}.  The uniform-convexity defect for the
\(p\)-energy yields an \(L^1\) transport equation for the probability
density \(u_j^p\).  Since its drift coefficient is again \(m-1\), the
suspension, stationary-measure, and support-saturation arguments from the
quadratic case apply without change.

\begin{acknowledgements}
The authors are grateful to Jiaping Wang for bringing this problem to their attention and to Shing-Tung Yau and Xiaodong Wang for their valuable comments and suggestions. The first author also thanks his advisor, Guoyi Xu, for his guidance and mentorship.
\end{acknowledgements}

\section{Busemann functions and the McKean defects}
\label{sec:busemann-defects}

No cocompact group action is used in this section.  Throughout, let
\((X^m,g_X)\) be a complete simply connected Riemannian manifold, where
\(m\geq2\), and assume that, for some \(A\geq1\),
\begin{equation}\label{eq:pinched-curvature}
  -A^2\leq\sec_{g_X}\leq-1.
\end{equation}
Then \(X\) is a pinched negatively curved Hadamard manifold.  The
facts about the visual boundary and Busemann functions used in this
section are classical
(see \cite{EberleinONeill1973}, Sections~1--3, and
\cite{HeintzeImHof1977}, Sections~2--3).  The excess in McKean's
inequality will be written as the sum of a first-order defect and a
Busemann Laplacian defect.  Spectral equality forces both terms to vanish
along a single normalized minimizing sequence.

\subsection{The visual boundary and Busemann comparison}

A \emph{geodesic ray} is a unit-speed geodesic
\(c\colon[0,\infty)\to X\); thus
\[
  d\bigl(c(s),c(t)\bigr)=|s-t|
  \quad (s,t\geq0).
\]
Two geodesic rays \(c_1\) and \(c_2\) are called \emph{asymptotic}, written
\(c_1\sim c_2\), if
\[
  \sup_{t\geq0}d\bigl(c_1(t),c_2(t)\bigr)<\infty.
\]
For unit-speed rays, this is equivalent to requiring their images to have
finite Hausdorff distance.  The triangle inequality shows that \(\sim\) is
an equivalence relation.  The \emph{visual boundary} of \(X\) is
\[
  \partial_\infty X
  :=
  \{\text{geodesic rays in \(X\)}\}/\mathord{\sim}.
\]
The class \([c]\), also denoted by \(c(+\infty)\), is the endpoint of \(c\)
at infinity.

We next describe this boundary from a fixed base point \(o\in X\).  Set
\[
  S_oX:=\{v\in T_oX:|v|_{g_X}=1\},
  \quad
  c_v(t):=\exp_o(tv).
\]
Every \(\xi\in\partial_\infty X\) has a unique representative
\(c_{o,\xi}\) issuing from \(o\).  Existence follows by taking a locally
uniform limit of the minimizing segments from \(o\) to points tending to
\(\xi\).  For uniqueness, if \(\gamma_1,\gamma_2\) are asymptotic rays
issuing from \(o\), then
\[
  f(t):=d\bigl(\gamma_1(t),\gamma_2(t)\bigr)
\]
is bounded and convex with \(f(0)=0\).  Thus
\(f(s)\leq (s/t)f(t)\) for \(0<s<t\); letting \(t\to\infty\) gives
\(\gamma_1=\gamma_2\).

It follows that the endpoint map
\[
  \Phi_o\colon S_oX\longrightarrow\partial_\infty X,
  \quad
  \Phi_o(v):=[c_v],
\]
is a bijection.  The \emph{cone topology} on \(\partial_\infty X\) is the
topology transported from \(S_oX\) by \(\Phi_o\).  Thus
\(\xi_j\to\xi\) precisely when
\[
  \dot c_{o,\xi_j}(0)\longrightarrow\dot c_{o,\xi}(0)
  \quad\text{in }S_oX.
\]
Equivalently, \(c_{o,\xi_j}\to c_{o,\xi}\) uniformly on every bounded time
interval.

For another base point \(o'\in X\), define the reanchoring map by
\[
  R_{o,o'}
  :=
  \Phi_{o'}^{-1}\circ\Phi_o
  \colon S_oX\longrightarrow S_{o'}X.
\]
The same compactness and convexity argument shows that \(R_{o,o'}\) and
\(R_{o',o}\) are continuous.  Hence \(R_{o,o'}\) is a homeomorphism, so the
cone topology is independent of the base point.

Finally, an isometry \(F\in\operatorname{Isom}(X)\) acts on the visual
boundary by
\[
  F_\infty([c]):=[F\circ c].
\]
This is well defined because \(F\) preserves distances.  In the endpoint
coordinates it has the form
\[
  F_\infty
  =
  \Phi_{F(o)}
  \circ\left.dF_o\right|_{S_oX}
  \circ\Phi_o^{-1},
\]
so \(F_\infty\) is a homeomorphism of \(\partial_\infty X\).

For \(\xi\in\partial_\infty X\), define the Busemann function normalized at
\(o\) by
\begin{equation}\label{eq:busemann-definition}
  B_{\xi,o}(x)
  :=\lim_{t\to\infty}
  \bigl(d(x,c_{o,\xi}(t))-t\bigr).
\end{equation}
To justify the definition, set
\[
  h_t(x):=d(x,c_{o,\xi}(t))-t.
\]
The triangle inequality shows that \(t\mapsto h_t(x)\) is nonincreasing and
that
\[
  -d(x,o)\leq h_t(x)\leq d(x,o).
\]
Hence the limit in \eqref{eq:busemann-definition} exists, is finite, and
satisfies \(B_{\xi,o}(o)=0\).  Since the functions \(h_t\) are uniformly
\(1\)-Lipschitz, they converge uniformly on compact subsets, and
\(B_{\xi,o}\) is \(1\)-Lipschitz.

Changing the base point changes only the additive normalization.  More
precisely, for \(o'\in X\),
\[
  B_{\xi,o'}(x)
  =
  B_{\xi,o}(x)-B_{\xi,o}(o').
\]
It is therefore natural to introduce the Busemann cocycle
\[
  \beta_\xi(x,y)
  :=
  B_{\xi,o}(x)-B_{\xi,o}(y);
\]
by the base-point change identity, this definition is independent of \(o\).
It satisfies
\[
  \beta_\xi(x,z)
  =
  \beta_\xi(x,y)+\beta_\xi(y,z),
  \quad
  |\beta_\xi(x,y)|\leq d(x,y).
\]
Moreover, if \(F\in\operatorname{Isom}(X)\), then
\[
  B_{F_\infty(\xi),F(o)}(F(x))=B_{\xi,o}(x),
  \quad
  \beta_{F_\infty(\xi)}(F(x),F(y))=\beta_\xi(x,y).
\]
Since the gradient, Hessian, and Laplacian are unchanged by adding a
constant, we suppress the base point and write \(B_\xi\) whenever only
derivatives are involved.  The curvature pinching in
\eqref{eq:pinched-curvature} yields the differential comparison estimates
used below.

\begin{proposition}
\label{prop:busemann-comparison}
Let \((X^m,g_X)\) be a complete simply connected Riemannian manifold,
where \(m\geq2\), and assume that
\[
  -A^2\leq\sec_{g_X}\leq-1
\]
for some \(A\geq1\).
Then 
\begin{enumerate}[label=\textup{(\roman*)}]
\item For every \(\xi\in\partial_\infty X\), the Busemann function
  \(B_\xi\) is of class \(C^2\).  If \(v_{x,\xi}\in S_xX\) is the initial
  velocity of the unique ray from \(x\) to \(\xi\), then
  \begin{equation}\label{eq:unit-busemann-gradient}
    \nabla B_\xi(x)=-v_{x,\xi},
    \quad
    |\nabla B_\xi(x)|=1.
  \end{equation}

\item For every \(\xi\in\partial_\infty X\),
  \addtocounter{equation}{1}
  \begin{equation}\label{eq:busemann-laplacian-comparison}
    \begin{aligned}
      g_X-dB_\xi\otimes dB_\xi
      &\leq\nabla^2B_\xi
      \leq A(g_X-dB_\xi\otimes dB_\xi),\\
      m-1&\leq\Delta B_\xi\leq A(m-1).
    \end{aligned}
  \end{equation}

\item Let \(\operatorname{pr}_X\colon
  X\times\partial_\infty X\to X\) be the projection.  Then
  \[
    (x,\xi)\longmapsto\nabla B_\xi(x),
    \quad
    (x,\xi)\longmapsto\nabla^2B_\xi(x)
  \]
  define continuous sections of
  \(\operatorname{pr}_X^*TX\) and
  \(\operatorname{pr}_X^*\operatorname{Sym}^2(T^*X)\), respectively, and
  \[
    (x,\xi)\longmapsto\Delta B_\xi(x)
  \]
  is a continuous function on \(X\times\partial_\infty X\).
\end{enumerate}
\end{proposition}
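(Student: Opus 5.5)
The plan is to realize \(B_\xi\) as a locally uniform limit of the distance functions \(h_t(x)=d(x,c_{o,\xi}(t))-t\) and to pass Riccati/Jacobi comparison estimates for these smooth functions to the limit. Fix a compact set \(K\subset X\). For \(t\) large, \(c_{o,\xi}(t)\) lies outside a neighbourhood of \(K\), so \(h_t\) is smooth on \(K\) because there are no conjugate points on a Hadamard manifold. Its gradient is \(-\) the unit initial velocity of the segment from \(x\) to \(c_{o,\xi}(t)\). Its Hessian, restricted to the orthogonal complement of \(\nabla h_t\), is the shape operator \(S_t\) of the distance sphere of radius \(r=d(x,c_{o,\xi}(t))\). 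Rauch/Hessian comparison with the space forms of curvature \(-1\) and \(-A^2\) gives
\[
  \coth r\,(g-dh_t\otimes dh_t)\le\nabla^2h_t\le A\coth(Ar)\,(g-dh_t\otimes dh_t).
\]
As \(t\to\infty\), \(r\to\infty\) uniformly on \(K\), so the constants tend to \(1\) and \(A\).

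For (i), first show \(\nabla h_t(x)\to -v_{x,\xi}\) uniformly on \(K\). The segments from \(x\) to \(c_{o,\xi}(t)\) converge to the ray from \(x\) asymptotic to \(c_{o,\xi}\); this is the same compactness-plus-convexity argument used for \(\Phi_o\) being a bijection, and uniqueness of the limit gives convergence without passing to subsequences. Uniform \(C^1\) convergence then gives \(B_\xi\in C^1\) with \(\nabla B_\xi=-v_{x,\xi}\) and \(|\nabla B_\xi|=1\).

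For \(C^2\), I would not pass Hessian bounds naively, because bounded Hessians give only \(C^{1,1}\). Instead I would use the Riccati equation. Along the integral curves of \(\nabla h_t\), which are geodesics, the shape operators satisfy \(S'+S^2+R=0\). For the horosphere, define \(U_\xi\) as the limit of \(S_t\) obtained from the stable Riccati solution: \(U(x)=\lim_{T\to\infty}U_T\), where \(U_T\) solves the Riccati equation along the ray \(c_{x,\xi}\) with \(U_T(T)=+\infty\). Monotonicity in \(T\), which follows from Riccati comparison, gives existence of this limit, and the comparison bounds give \(I\le U\le A\,I\).

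Continuity of \(U\) in the initial vector \(v_{x,\xi}\) is the main obstacle. I plan to prove it with the standard Eschenburg/Heintze–Im Hof estimate. The difference of two Riccati solutions along the same geodesic with terminal data at \(T_1<T_2\) decays like \(e^{-2T_1}\) at time \(0\), using \(U\ge I\) and the resulting contraction of the linearized equation. Hence \(U_T\to U\) uniformly in the initial vector over compact sets, and each \(U_T\) depends continuously on that vector, being a solution of an ODE with continuous coefficients. I expect the same estimate to show \(\nabla^2h_t\to U_\xi\) uniformly on \(K\). The two horizons differ slightly, \(c_{o,\xi}(t)\) versus points of the ray from \(x\), but both produce Riccati solutions that are \(\ge\coth\)-type and blow up far away, so they converge to the same stable solution. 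Uniform convergence of \(h_t\) in \(C^2(K)\) then yields \(B_\xi\in C^2\) with \(\nabla^2B_\xi=U_\xi\oplus0\).

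Statement (ii) follows by letting \(t\to\infty\) in the displayed Hessian comparison and taking traces over the \((m-1)\)-dimensional complement. For (iii), write \(\nabla B_\xi(x)=-\Phi_x^{-1}(\xi)\). Joint continuity of this map in \((x,\xi)\) follows as in the proof that the reanchoring map \(R_{o,o'}\) is a homeomorphism, now with \(o'\) varying, again by compactness and uniqueness of limit rays. Joint continuity of \(\nabla^2B_\xi(x)=U(v_{x,\xi})\) then follows from continuity of \(U\) in the unit vector established above, and \(\Delta B_\xi(x)\) is its trace.
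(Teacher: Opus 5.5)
Your proposal is correct and follows the paper's route. You approximate $B_\xi$ by distance functions to far points of the ray, pass the sphere Hessian comparison to the limit for (ii), and obtain the joint continuity in (iii) as a uniform limit of the continuous finite-horizon Hessians. The differences are minor: where the paper uses the Euclidean Jacobi-field bound $\|H^T_{x,\xi}-\nabla^2B_\xi(x)\|\le 1/T$ (which needs only nonpositive curvature), you use the exponential Riccati contraction coming from $U\ge I$, together with the bound from $\sec\ge -A^2$ at an intermediate time. You also use this estimate, plus continuity of the stable solution in the unit vector, to prove the $C^2$ convergence in (i), which the paper simply cites from Heintze--Im Hof. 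Continuity of the stable solution is what lets you pass from the segment direction toward $c_{o,\xi}(t)$ to $v_{x,\xi}$.
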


\begin{proof}
Fix \(\xi\in\partial_\infty X\) and \(x\in X\), and write
\(c=c_{x,\xi}\).  For \(R>0\), set
\[
  b_R(y):=d(y,c(R))-R.
\]
On every compact subset of \(X\), the functions \(b_R\) are smooth for
all sufficiently large \(R\) and converge to the Busemann function
normalized by \(B_\xi(x)=0\).

We use the standard radial-field convergence theorem.  The radial fields
\(-\nabla b_R\) converge locally uniformly to
\(v_{\,\cdot,\xi}\), and their covariant derivatives converge locally
uniformly to the derivatives determined by the stable Jacobi fields along
the rays to \(\xi\).  Equivalently, \(b_R\to B_\xi\) in
\(C^2\) on compact subsets.  This gives
\[
  B_\xi\in C^2(X),
  \quad
  \nabla B_\xi(x)=-v_{x,\xi},
  \quad
  |\nabla B_\xi|=1,
\]
and proves \textup{(i)}.  The local \(C^2\)-convergence, including its
uniformity on compact families, is the content of the stable-Jacobi-field
argument cited after the proof.

For \textup{(ii)}, decompose
\[
  w=a\,v_{x,\xi}+w^\perp,
  \quad
  w^\perp\perp v_{x,\xi}.
\]
Hessian comparison for the distance from \(c(R)\) gives
\[
  \coth R\,|w^\perp|^2
  \leq \nabla^2b_R(x)(w,w)
  \leq A\coth(AR)\,|w^\perp|^2.
\]
Passing to the \(C^2\)-limit and using
\(\nabla B_\xi=-v_{x,\xi}\), we obtain
\[
  |w|^2-\langle w,\nabla B_\xi\rangle^2
  \leq\nabla^2B_\xi(w,w)
  \leq
  A\bigl(|w|^2-\langle w,\nabla B_\xi\rangle^2\bigr).
\]
This is the tensor inequality in
\eqref{eq:busemann-laplacian-comparison}.  Taking the trace in an
orthonormal basis whose first vector is \(v_{x,\xi}\) gives the two
Laplacian bounds.

It remains to prove the joint continuity in \textup{(iii)}.  The map
\[
  (x,\xi)\longmapsto v_{x,\xi}
\]
is continuous by the cone topology, so the gradient formula gives the
joint continuity of \(\nabla B_\xi(x)\).

For the Hessian, fix \(T>0\) and define
\[
  H^T_{x,\xi}
  :=
  \left.
  \nabla_y^2\bigl(d(y,c_{x,\xi}(T))-T\bigr)
  \right|_{y=x}.
\]
Because a Hadamard manifold has no cut locus and
\((x,\xi)\mapsto c_{x,\xi}(T)\) is continuous, \(H^T\) is a continuous
section of
\(\operatorname{pr}_X^*\operatorname{Sym}^2(T^*X)\).

Let \(J_w^T\) be the normal Jacobi field along
\(c_{x,\xi}|_{[0,T]}\) with \(J_w^T(0)=w\) and \(J_w^T(T)=0\), and let
\(J_w^s\) be the bounded normal Jacobi field with \(J_w^s(0)=w\).
The Euclidean comparison estimate in the construction of \(J_w^s\)
gives
\[
  \bigl|(J_w^T)'(0)-(J_w^s)'(0)\bigr|
  \leq\frac{|w|}{T}.
\]
The Jacobi-field formulas for the two Hessians, together with their
vanishing in the \(v_{x,\xi}\)-direction, therefore yield
\[
  \bigl\|H^T_{x,\xi}-\nabla^2B_\xi(x)\bigr\|
  \leq\frac1T.
\]
The estimate is independent of \((x,\xi)\).  Hence
\(\nabla^2B_\xi(x)\) is the uniform limit of the continuous sections
\(H^T\) and is jointly continuous.  Taking the trace proves the joint
continuity of \(\Delta B_\xi(x)\).
\end{proof}

Proposition~\ref{prop:busemann-comparison} establishes the regularity and
comparison facts needed below.  The \(C^2\)-regularity in part~\textup{(i)}
follows from the stable-Jacobi-field argument (see
\cite{HeintzeImHof1977}, Proposition~3.1 and Lemma~2.2).  The
joint-continuity statement in part~\textup{(iii)} is also standard
(see \cite{EberleinONeill1973}, Proposition~3.5, and
\cite{HeintzeImHof1977}, Proposition~3.2).  The two-sided curvature bound
supplies the regularity used here; Busemann functions on an arbitrary
Hadamard manifold need not be \(C^2\).  In the model case of constant
sectional curvature \(-1\), the comparison estimate is sharp:
\[
  \nabla^2B_\xi=g_X-dB_\xi\otimes dB_\xi,
  \quad
  \Delta B_\xi=m-1.
\]
Thus, under \eqref{eq:pinched-curvature}, both \(\nabla^2B_\xi\) and
\(\Delta B_\xi\) are classical pointwise quantities, and the square
identity below follows by ordinary integration by parts.

\subsection{The McKean square identity}
The following identity is essential for the arguments that follow, and it is independent of curvature.
\begin{lemma}
\label{lem:mckean-square}
Let \((X^m,g_X)\) be a Riemannian manifold without boundary, where
\(m\geq2\), and let \(B\in C^2(X)\) satisfy \(|\nabla B|=1\).  Then, for every
\(u\in C_c^\infty(X)\),
\begin{equation}\label{eq:mckean-square}
\begin{split}
  \int_X\left(|\nabla u|^2-\frac{(m-1)^2}{4}u^2\right)\,dV
  ={}&\int_X\left|\nabla u+\frac{m-1}{2}u\nabla B\right|^2\,dV \\
  &+\frac{m-1}{2}\int_X(\Delta B-(m-1))u^2\,dV.
\end{split}
\end{equation}
\end{lemma}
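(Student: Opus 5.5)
Expand the square on the right-hand side pointwise and integrate the cross term by parts. Write \(a=\frac{m-1}{2}\). Since \(|\nabla B|=1\), we have pointwise
\[
  \left|\nabla u+a u\nabla B\right|^2
  =|\nabla u|^2+2a\,u\langle\nabla u,\nabla B\rangle+a^2u^2 .
\]
The cross term is \(a\langle\nabla(u^2),\nabla B\rangle\). Because \(u\in C_c^\infty(X)\) and \(B\in C^2(X)\), the vector field \(u^2\nabla B\) is \(C^1\) with compact support. The divergence theorem on the manifold without boundary then gives
\[
  \int_X\langle\nabla(u^2),\nabla B\rangle\,dV=-\int_X u^2\Delta B\,dV .
\]
No boundary terms appear, and \(\Delta B\) is a classical continuous function, so the integration is legitimate. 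This is the only place the \(C^2\) hypothesis is used.

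Substituting, we obtain
\[
  \int_X\left|\nabla u+a u\nabla B\right|^2dV
  =\int_X|\nabla u|^2dV-a\int_X u^2\Delta B\,dV+a^2\int_X u^2\,dV .
\]
Next add \(a\int_X(\Delta B-(m-1))u^2\,dV\) to both sides. The \(\Delta B\) terms cancel, and the constant terms combine to \(a^2-a(m-1)=a^2-2a^2=-a^2=-\frac{(m-1)^2}{4}\). This is exactly the left-hand side of \eqref{eq:mckean-square}.

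There is no real obstacle here. The only points to check are the sign convention \(\Delta=\operatorname{div}\nabla\), which is consistent with \(\Delta B_\xi\geq m-1\) in Proposition~\ref{prop:busemann-comparison}, and the use of \(|\nabla B|^2=1\) to turn \(a^2|\nabla B|^2u^2\) into \(a^2u^2\). No curvature assumption enters, in agreement with the remark preceding the lemma.
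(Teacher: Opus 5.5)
Your proposal is correct and follows essentially the same route as the paper: expand the square using \(|\nabla B|=1\), write the cross term as \(\frac{m-1}{2}\langle\nabla(u^2),\nabla B\rangle\), integrate by parts against the compactly supported \(C^1\) field \(u^2\nabla B\), and rearrange using \(\frac{m-1}{2}(m-1)=2\cdot\frac{(m-1)^2}{4}\). Your bookkeeping of the constants and the sign convention \(\Delta=\operatorname{div}\nabla\) matches the paper's computation exactly.
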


\begin{proof}
Set
\(
  I=\int_X|\nabla u+\frac{m-1}{2}u\nabla B|^2\,dV.
\)
Since \(u\) has compact support, integration by parts has no boundary
term.  Using \(|\nabla B|=1\) and
\(\nabla(u^2)=2u\nabla u\), we obtain
\begin{align*}
  I
  ={}&\int_X|\nabla u|^2\,dV
  +\frac{m-1}{2}\int_X
     \langle\nabla(u^2),\nabla B\rangle\,dV
  +\frac{(m-1)^2}{4}\int_Xu^2\,dV \\
  ={}&\int_X|\nabla u|^2\,dV
  -\frac{m-1}{2}\int_X(\Delta B)u^2\,dV
  +\frac{(m-1)^2}{4}\int_Xu^2\,dV \\
  ={}&\int_X\left(
    |\nabla u|^2-\frac{(m-1)^2}{4}u^2
  \right)\,dV \\
  &-\frac{m-1}{2}
  \int_X\bigl(\Delta B-(m-1)\bigr)u^2\,dV.
\end{align*}
Adding the final integral to both sides gives
\eqref{eq:mckean-square}.
\end{proof}

Lemma~\ref{lem:mckean-square} is the key analytic identity of the paper.
Apply it with \(B=B_\xi\).  Proposition~\ref{prop:busemann-comparison}
gives \(|\nabla B_\xi|=1\) and
\(\Delta B_\xi-(m-1)\geq0\).  Thus
\eqref{eq:mckean-square} decomposes the excess in McKean's inequality into
two nonnegative terms and therefore directly implies
\eqref{eq:mckean-bound}.  Moreover, if equality holds in McKean's
inequality, then, for every fixed \(\xi\), any \(L^2\)-normalized
minimizing sequence makes both terms tend to zero simultaneously.  These
are precisely the first and second defect limits
\eqref{eq:first-defect} and \eqref{eq:second-defect}.

We call the squared \(L^2\)-term the \emph{first defect}; it measures the
first-order error in
\[
  \nabla u+\frac{m-1}{2}u\nabla B_\xi=0.
\]
We call the integral containing
\(\Delta B_\xi-(m-1)\) the \emph{second defect}; it is the
\(u^2\,dV\)-weighted error in the Busemann Laplacian equality
\[
  \Delta B_\xi=m-1.
\]

\subsection{Vanishing of the first and second defects}

The next proposition makes the preceding consequence of
Lemma~\ref{lem:mckean-square} explicit.

\begin{proposition}
\label{prop:defects}
Let \((X^m,g_X)\) be a complete simply connected Riemannian manifold with
\(m\geq2\) and \(-A^2\leq\sec_{g_X}\leq-1\) for some \(A\geq1\).  Assume
\[
  \lambda_1(X)=\frac{(m-1)^2}{4}.
\]
Then there exists an \(L^2\)-normalized minimizing sequence
\(\{u_j\}\subset C_c^\infty(X)\) such that
\begin{equation}\label{eq:minimizing-sequence}
  \int_Xu_j^2\,dV=1,
  \quad
  \int_X|\nabla u_j|^2\,dV
  \longrightarrow\frac{(m-1)^2}{4}.
\end{equation}
For every fixed \(\xi_0\in\partial_\infty X\), the same sequence satisfies
the following two conclusions.
\begin{enumerate}[label=\textup{(\roman*)}]
\item The first defect vanishes:
\begin{equation}\label{eq:first-defect}
  \int_X\left|
    \nabla u_j+\frac{m-1}{2}u_j\nabla B_{\xi_0}
  \right|^2\,dV\longrightarrow0.
\end{equation}
\item The second defect vanishes:
\begin{equation}\label{eq:second-defect}
  \int_X\bigl(\Delta B_{\xi_0}-(m-1)\bigr)u_j^2\,dV
  \longrightarrow0.
\end{equation}
\end{enumerate}
\end{proposition}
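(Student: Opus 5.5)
The plan is to read off both conclusions directly from Lemma~\ref{lem:mckean-square} combined with the sign information in Proposition~\ref{prop:busemann-comparison}.

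\textbf{Step 1: the minimizing sequence.} By the definition \eqref{eq:bottom-spectrum} of \(\lambda_1(X)\) as an infimum over \(C_c^\infty(X)\), there is a sequence \(v_j\in C_c^\infty(X)\setminus\{0\}\) whose Rayleigh quotients tend to \(\lambda_1(X)=(m-1)^2/4\). Rescaling \(u_j:=v_j/\|v_j\|_{L^2}\) leaves the Rayleigh quotient unchanged, so \eqref{eq:minimizing-sequence} holds. The choice of \(u_j\) does not involve any \(\xi\), which is why a single sequence works for every \(\xi_0\).

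\textbf{Step 2: applying the square identity.} Fix \(\xi_0\) and set \(B=B_{\xi_0}\). By Proposition~\ref{prop:busemann-comparison}(i), \(B\in C^2(X)\) and \(|\nabla B|=1\), so Lemma~\ref{lem:mckean-square} applies to each \(u_j\) and gives
\[
  \int_X|\nabla u_j|^2\,dV-\frac{(m-1)^2}{4}
  = I_j+\frac{m-1}{2}J_j .
\]
Here \(I_j\) is the integral in \eqref{eq:first-defect} and \(J_j\) is the integral in \eqref{eq:second-defect}. We have used \(\int u_j^2=1\).

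\textbf{Step 3: sign and conclusion.} The left-hand side tends to \(0\) by \eqref{eq:minimizing-sequence}. Clearly \(I_j\geq0\). By the Laplacian comparison \eqref{eq:busemann-laplacian-comparison}, \(\Delta B-(m-1)\geq0\) pointwise, so \(J_j\geq0\); since \(m\geq2\), the coefficient \((m-1)/2\) is positive. Two nonnegative sequences whose positive combination tends to zero must each tend to zero, which gives (i) and (ii).

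There is no real obstacle here. The only point to check is that the integrals make sense. The integrand \(\Delta B\,u_j^2\) is continuous with compact support because \(B\in C^2\), so the integration by parts inside Lemma~\ref{lem:mckean-square} is legitimate.
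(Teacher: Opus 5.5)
Your proposal is correct and follows essentially the same route as the paper. The paper likewise picks an \(L^2\)-normalized sequence with Rayleigh quotients tending to \((m-1)^2/4\) and applies the square identity (Lemma~\ref{lem:mckean-square}) with \(B=B_{\xi_0}\). It then uses the nonnegativity of both terms, from Proposition~\ref{prop:busemann-comparison}, to force each defect to zero.
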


\begin{proof}
By the variational definition of \(\lambda_1(X)\) and the homogeneity of
the Rayleigh quotient, for each \(j\geq1\) we may choose
\(u_j\in C_c^\infty(X)\) such that \(\int_Xu_j^2\,dV=1\) and
\[
  \frac{(m-1)^2}{4}
  \leq\int_X|\nabla u_j|^2\,dV
  \leq\frac{(m-1)^2}{4}+\frac1j.
\]
Set
\[
  \varepsilon_j
  :=\int_X|\nabla u_j|^2\,dV-\frac{(m-1)^2}{4}
  \quad\text{so that}\quad
  0\leq\varepsilon_j\leq\frac1j.
\]
Thus \(\varepsilon_j\to0\), and
\eqref{eq:minimizing-sequence} follows.

Fix \(\xi_0\in\partial_\infty X\).  By
Proposition~\ref{prop:busemann-comparison},
\[
  B_{\xi_0}\in C^2(X),\quad
  |\nabla B_{\xi_0}|=1,\quad
  \Delta B_{\xi_0}\geq m-1.
\]
Applying Lemma~\ref{lem:mckean-square} with \(B=B_{\xi_0}\) gives the exact
decomposition
\[
  \varepsilon_j
  =\int_X\left|\nabla u_j+\frac{m-1}{2}u_j\nabla B_{\xi_0}\right|^2\,dV
   +\frac{m-1}{2}\int_X
    \bigl(\Delta B_{\xi_0}-(m-1)\bigr)u_j^2\,dV.
\]
Proposition~\ref{prop:busemann-comparison} also shows that both terms on the
right are nonnegative.  Hence
\begin{align*}
  0\leq
  \int_X\left|
    \nabla u_j+\frac{m-1}{2}u_j\nabla B_{\xi_0}
  \right|^2\,dV
  &\leq\varepsilon_j\longrightarrow0,\\
  0\leq
  \int_X\bigl(\Delta B_{\xi_0}-(m-1)\bigr)u_j^2\,dV
  &\leq\frac{2\varepsilon_j}{m-1}\longrightarrow0.
\end{align*}
These are precisely \eqref{eq:first-defect} and
\eqref{eq:second-defect}.
\end{proof}

The two vanishing statements play different roles below.  The second defect
will place the limiting measure on the zero set of the Busemann defect,
whereas the first will yield stationarity and then leaf saturation of its
support.  Since the measures \(u_j^2dV\) may escape to infinity on \(X\), we
now pass to the compact horospherical suspension, where their total mass is
preserved.

\section{The horospherical suspension}
\label{sec:suspension}

We return to the cocompact setting of Theorem~\ref{thm:main}.  Let
\((M^m,g)\) be a closed Riemannian manifold with \(\sec_g\leq-1\), and let
\[
  \pi\colon(\wti M,\wti g)\longrightarrow(M,g)
\]
be its universal Riemannian covering.  Since \(M\) is compact, its sectional
curvature is also bounded below.  Thus \((\wti M,\wti g)\) satisfies the
curvature bounds in \eqref{eq:pinched-curvature}, and all the results of
Section~\ref{sec:busemann-defects} apply with \(X=\wti M\).

The next step is to extract a nonzero limit from the probability measures
\(u_j^2\,dV\).  On the noncompact manifold \(\wti M\), their mass may
escape to infinity.  Passing to the compact quotient \(M\) would prevent
this loss, but it would forget the boundary point \(\xi_0\) that defines
the Busemann function and its defect.  This boundary point cannot be held
fixed while the spatial variable is recentered: a deck transformation
\(\gamma\) sends the pair \((x,\xi_0)\) to
\((\gamma x,\gamma\xi_0)\).

The compact space must therefore include both variables and identify them
equivariantly.  The horospherical suspension defined below has these
properties.  The map
\[
  x\longmapsto[x,\xi_0]
\]
pushes \(u_j^2\,dV\) to a probability measure on the compact suspension,
while preserving the Busemann information needed in the equality
argument.

\subsection{The suspension and the unit tangent bundle}

Identify \(\Gamma=\pi_1(M)\) with the deck group of
\(\pi\colon\wti M\to M\).  Each \(\gamma\in\Gamma\) is an isometry of
\(\wti M\) and therefore acts on \(\partial_\infty\wti M\).  We use the
diagonal action
\[
  \gamma\cdot(x,\xi):=(\gamma x,\gamma\xi).
\]
The \emph{horospherical suspension} is the quotient
\begin{equation}\label{eq:suspension}
  Z
  :=
  (\wti M\times\partial_\infty\wti M)/\Gamma.
\end{equation}
The quotient is equipped with the quotient topology, and the class of
\((x,\xi)\) is denoted by \([x,\xi]\).  This is the visual-boundary form
of Ledrappier's horospherical suspension (see
\cite{Ledrappier2010}, Section~1, and
\cite{LedrappierWang2010}, Section~2).

We first identify the space before taking the quotient.  For
\((x,\xi)\in\wti M\times\partial_\infty\wti M\), let \(c_{x,\xi}\) be
the unique unit-speed geodesic ray from \(x\) to \(\xi\), and let
\[
  v_{x,\xi}:=\dot c_{x,\xi}(0)\in S_x\wti M.
\]
Define
\[
  \Phi\colon
  \wti M\times\partial_\infty\wti M
  \longrightarrow S\wti M,
  \quad
  \Phi(x,\xi):=v_{x,\xi}.
\]
Every unit vector determines its base point and its forward endpoint, so
\(\Phi\) is bijective.  Its inverse is
\[
  \Phi^{-1}(v)
  =
  \bigl(p(v),c_v(+\infty)\bigr),
\]
where \(p\colon S\wti M\to\wti M\) is the bundle projection and
\(c_v(t):=\exp_{p(v)}(tv)\).  Proposition~\ref{prop:busemann-comparison}
\textup{(iii)} gives the continuity of
\((x,\xi)\mapsto v_{x,\xi}\), while the cone topology gives the
continuity of \(v\mapsto c_v(+\infty)\).  Hence \(\Phi\) is a
homeomorphism.

The deck group acts on \(S\wti M\) by
\[
  \gamma\cdot v:=d\gamma_x(v),
  \quad v\in S_x\wti M.
\]
Since \(\gamma\) is an isometry, \(\gamma\circ c_{x,\xi}\) is the
unit-speed ray from \(\gamma x\) to \(\gamma\xi\).  Uniqueness gives
\[
  \gamma\circ c_{x,\xi}=c_{\gamma x,\gamma\xi}.
\]
Differentiating at \(t=0\) gives
\[
  v_{\gamma x,\gamma\xi}
  =d\gamma_x(v_{x,\xi}).
\]
Thus \(\Phi\) is \(\Gamma\)-equivariant and induces a homeomorphism
\[
  \Phi_\Gamma\colon
  Z\longrightarrow S\wti M/\Gamma,
  \quad
  \Phi_\Gamma([x,\xi])=[v_{x,\xi}].
\]

We next identify the quotient \(S\wti M/\Gamma\).  Because \(\pi\) is a
local isometry, its differential maps unit vectors to unit vectors and
defines
\[
  \Psi\colon S\wti M/\Gamma\longrightarrow SM,
  \quad
  \Psi([v])=d\pi_x(v),
  \quad v\in S_x\wti M.
\]
This map is well defined: since \(\pi\circ\gamma=\pi\),
\[
  d\pi_{\gamma x}\circ d\gamma_x=d\pi_x.
\]
Every unit vector in \(SM\) lifts to a unit vector in \(S\wti M\), and
any two lifts differ by a deck transformation.  Thus \(\Psi\) is
bijective.  In evenly covered neighborhoods, both \(\Psi\) and its
inverse are represented by the differential of a local isometry, so
\(\Psi\) is a homeomorphism.

Composing the two quotient maps gives
\begin{equation}\label{eq:suspension-unit-tangent}
  \overline\Phi
  :=
  \Psi\circ\Phi_\Gamma
  \colon Z\longrightarrow SM,
  \quad
  \overline\Phi([x,\xi])
  :=
  d\pi_x(v_{x,\xi}).
\end{equation}
Equivalently, the formula is independent of the representative because
\[
  d\pi_{\gamma x}(v_{\gamma x,\gamma\xi})
  =
  d\pi_{\gamma x}\bigl(d\gamma_x(v_{x,\xi})\bigr)
  =
  d\pi_x(v_{x,\xi}).
\]
Both factors in the definition of \(\overline\Phi\) are homeomorphisms;
hence
\[
  Z\cong SM.
\]
Since \(M\) is closed, \(SM\), and hence \(Z\), is compact and metrizable.

\subsection{The suspension as a foliated space}
\label{subsec:suspension-foliated-space}

We begin with the regularity convention used below.  Let
\(r\in\mathbb N\cup\{\infty\}\).  An \(m\)-dimensional
\(C^{r,0}\) \emph{foliated space} is a topological space \(X\) equipped
with charts
\[
  \psi_\alpha\colon U_\alpha\longrightarrow B_\alpha\times T_\alpha,
\]
where \(B_\alpha\subset\mathbb R^m\) is an open ball and \(T_\alpha\) is
a topological space, such that every coordinate change has the form
\[
  (x,\tau)\longmapsto
  \bigl(h_{\beta\alpha}(x,\tau),
        \theta_{\beta\alpha}(\tau)\bigr).
\]
For each fixed \(\tau\), the map
\(x\mapsto h_{\beta\alpha}(x,\tau)\) is \(C^r\), and all its
\(x\)-derivatives through order \(r\) depend continuously on
\((x,\tau)\).  When \(r=\infty\), this condition is imposed at every
finite order.  Thus \(x\) is the differentiable \emph{leaf variable},
whereas \(\tau\) is only a continuous \emph{transverse variable}.

\begin{definition}
\label{def:Ck0}
Let \(k\) be a nonnegative integer, with \(k\leq r\) when
\(r<\infty\).  A function \(f\colon X\to\mathbb R\) is of class
\(C^{k,0}\) if, in every foliated chart, all derivatives
\[
  \partial_x^I(f\circ\psi_\alpha^{-1})(x,\tau),
  \quad |I|\leq k,
\]
exist and are continuous in both variables.  No transverse derivative is
required.  For a leafwise tensor, the same condition is imposed on its
coordinate components.
\end{definition}

The sets
\[
  \psi_\alpha^{-1}(B_\alpha\times\{\tau\}),
  \quad \tau\in T_\alpha,
\]
are the \emph{plaques}.  A \emph{leaf} is a maximal subset connected by
finite chains of intersecting plaques.  The leaf coordinates make every
leaf a \(C^r\) \(m\)-manifold, although its manifold topology may be
finer than the subspace topology inherited from \(X\).  Unless stated
otherwise, every differential operator below acts only along the leaves.

Let
\[
  q\colon
  \wti M\times\partial_\infty\wti M
  \longrightarrow Z
\]
be the quotient map.  Since the diagonal action sends
\[
  \gamma\bigl(\wti M\times\{\xi\}\bigr)
  =
  \wti M\times\{\gamma\xi\},
\]
the image of a slice depends only on the \(\Gamma\)-orbit of \(\xi\).
Set
\[
  \mc L_\xi
  :=q\bigl(\wti M\times\{\xi\}\bigr)
  =\{[x,\xi]:x\in\wti M\},
\]
and define
\[
  q_\xi\colon\wti M\longrightarrow\mc L_\xi,
  \quad
  q_\xi(x):=[x,\xi].
\]
In Ledrappier's notation, \(\mc L_\xi\) is the leaf \(W_\xi\) of the
Busemann lamination; see \cite{Ledrappier2010}, Section~1.
If \(\mc L_\xi\cap\mc L_\eta\neq\varnothing\), then
\([x,\xi]=[y,\eta]\) for some \(x,y\in\wti M\), so
\((y,\eta)=(\gamma x,\gamma\xi)\) for some \(\gamma\in\Gamma\).  Hence
\(\eta=\gamma\xi\).  The converse follows directly from the diagonal
action.  Therefore
\[
  \mc L_\xi=\mc L_\eta
  \quad\Longleftrightarrow\quad
  \eta\in\Gamma\xi,
\]
and consequently
\[
  \mc F:=\{\mc L_\xi:\xi\in\partial_\infty\wti M\}
\]
is a partition of \(Z\).

\begin{lemma}
\label{lem:suspension-foliation}
The quotient \(Z\) is a compact \(m\)-dimensional
\(C^{\infty,0}\) foliated space, and its leaves are the sets
\(\mc L_\xi\).  For every \(\xi\in\partial_\infty\wti M\), the map
\(q_\xi\colon\wti M\to\mc L_\xi\) is a Riemannian covering with deck
group
\(\Gamma_\xi=\{\gamma\in\Gamma:\gamma\xi=\xi\}\), and hence induces a
Riemannian isometry
\(\wti M/\Gamma_\xi\to\mc L_\xi\).  In particular, every leaf
\(\mc L_\xi\) is a smooth, connected, complete Riemannian manifold.
\end{lemma}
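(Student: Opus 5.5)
The plan is to build foliated charts on \(Z\) by first taking product charts on \(\wti M\times\partial_\infty\wti M\) and then pushing them down through the quotient map \(q\). Two facts about the action make this work. The deck group \(\Gamma\) acts freely and properly discontinuously on \(\wti M\), so it also acts freely and properly discontinuously on the product: the \(\wti M\)-coordinate alone already separates translates. Around each \(x_0\in\wti M\) I choose a geodesic ball \(W\) such that \(\gamma W\cap W=\varnothing\) for \(\gamma\neq e\). Then \(q\) restricted to \(W\times\partial_\infty\wti M\) is injective.

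Next I check that this restriction is open. Let \(U\subset W\times\partial_\infty\wti M\) be open. Its saturation is \(\bigcup_\gamma\gamma U\), and each \(\gamma\) acts by a homeomorphism of the product: an isometry on the first factor and \(\gamma_\infty\) on the second. Hence the saturation is open, and \(q|_{W\times\partial_\infty\wti M}\) is a homeomorphism onto an open set. Composing with a normal-coordinate chart \(W\to B\subset\R^m\) gives a chart
\[
  \psi\colon q(W\times\partial_\infty\wti M)\longrightarrow B\times\partial_\infty\wti M .
\]
Here the transverse space is \(T=\partial_\infty\wti M\). Now consider the overlap of two such charts. A point \([x,\xi]\) with representatives \((x,\xi)\in W_\alpha\times\partial_\infty\wti M\) and \((\gamma x,\gamma\xi)\in W_\beta\times\partial_\infty\wti M\) has, on each connected piece of the overlap, a locally constant \(\gamma\), by discreteness. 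So the transition map has the form
\[
  (x,\xi)\longmapsto(\gamma x,\gamma_\infty\xi).
\]
The leaf part \(x\mapsto\gamma x\), read in normal coordinates, is smooth and does not depend on \(\xi\). Therefore all its \(x\)-derivatives are trivially jointly continuous, and the transverse part \(\gamma_\infty\) is a homeomorphism. This gives a \(C^{\infty,0}\) structure. Compactness and metrizability were already obtained from \(Z\cong SM\).

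In this chart, plaques are the sets \(q(W\times\{\xi\})\). Chaining plaques along the connected manifold \(\wti M\times\{\xi\}\) shows that each \(\mc L_\xi=q(\wti M\times\{\xi\})\) is contained in a single leaf. Conversely, a plaque meeting \(\mc L_\xi\) lies in \(\mc L_\xi\): if \([y,\eta]\) with \(y\in W\) meets \(\mc L_\xi\), then \(\eta\in\Gamma\xi\), and the whole plaque lies in \(\mc L_{\eta}=\mc L_\xi\). Since the \(\mc L_\xi\) partition \(Z\), as shown above, the leaves are exactly the sets \(\mc L_\xi\).

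For the covering statement, the leaf topology of \(\mc L_\xi\) is the one generated by the plaques. In the chart, \(q_\xi\) restricted to \(W\) is the map \(x\mapsto(\text{coords of }x,\xi)\), a diffeomorphism onto a plaque. Hence \(q_\xi\) is a local diffeomorphism, and it is surjective by definition. For the fibres, \(q_\xi(x)=q_\xi(y)\) holds if and only if \((y,\xi)=(\gamma x,\gamma\xi)\), that is, \(y=\gamma x\) with \(\gamma\in\Gamma_\xi\). For evenness of the covering, the preimage of the plaque \(q(W\times\{\xi\})\) under \(q_\xi\) is
\[
  \bigsqcup_{\gamma\in\Gamma_\xi}\gamma W ,
\]
a disjoint union of sets each mapped diffeomorphically onto it. Points with a different \(\xi\)-label are excluded here because the chart is injective. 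So \(q_\xi\) is a covering with deck group \(\Gamma_\xi\). The subgroup \(\Gamma_\xi\) acts freely and properly discontinuously by isometries of \(\wti g\), so \(\wti M/\Gamma_\xi\) carries a Riemannian metric. The transitions \(x\mapsto\gamma x\) are isometries, so \(\wti g\) descends to a well-defined leafwise metric, and the induced bijection \(\wti M/\Gamma_\xi\to\mc L_\xi\) is a Riemannian isometry. Connectedness and completeness are inherited from \(\wti M\), since a Riemannian quotient of a complete manifold by a free, properly discontinuous isometric action is complete.

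I expect the main obstacle to be the subtle point-set step: verifying that the leaf topology on \(\mc L_\xi\), which is finer than the subspace topology when \(\Gamma\xi\) is dense, agrees with the quotient topology of \(\wti M/\Gamma_\xi\). I would handle this by working only with plaques, as above, rather than the subspace topology. This relies on the chart injectivity established in the first step.
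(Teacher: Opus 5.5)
Your proposal is correct and follows essentially the same route as the paper. You build product charts from deck-disjoint balls pushed through the open quotient map \(q\), and on each overlap piece the coordinate change is \((x,\xi)\mapsto(\gamma x,\gamma\xi)\) for a fixed \(\gamma\). Plaque chains then identify the leaves with the \(\mc L_\xi\), and the fibre computation \(q_\xi(x)=q_\xi(y)\iff y\in\Gamma_\xi x\) gives \(\wti M/\Gamma_\xi\cong\mc L_\xi\). The only difference is that you check even covering directly via \(q_\xi^{-1}\bigl(q(W\times\{\xi\})\bigr)=\bigsqcup_{\gamma\in\Gamma_\xi}\gamma W\), whereas the paper deduces the covering property from the bijective local isometry \(\wti M/\Gamma_\xi\to\mc L_\xi\).
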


\begin{proof}
\emph{Foliated atlas.}
Let \(U\subset M\) be an evenly covered coordinate ball, and choose one
lift \(\wti U\subset\wti M\).  Set
\[
  Z_{\wti U}:=q\bigl(\wti U\times\partial_\infty\wti M\bigr).
\]
The quotient map \(q\) is open, and distinct deck translates of
\(\wti U\) are disjoint.  Hence \(Z_{\wti U}\) is open in \(Z\), and
\(q\) restricts to a homeomorphism from
\(\wti U\times\partial_\infty\wti M\) onto \(Z_{\wti U}\).  Choose
smooth coordinates
\(\varphi_U\colon\wti U\to B_U\subset\mathbb R^m\), and set
\[
  \psi_{\wti U}\colon Z_{\wti U}
  \longrightarrow B_U\times\partial_\infty\wti M,
  \quad
  \psi_{\wti U}([x,\xi])
  :=
  \bigl(\varphi_U(x),\xi\bigr).
\]
This is a homeomorphism.  As \(U\) ranges over an evenly covered
coordinate cover of \(M\), these charts cover \(Z\), since every class
has a representative in one of the chosen lifts.  Their plaques are the
sets \(q(\wti U\times\{\xi\})\), with
\(\xi\in\partial_\infty\wti M\).

An overlap of two such charts decomposes into open pieces on each of
which the two representatives are related by a fixed
\(\gamma\in\Gamma\).  On such a piece the coordinate change is
\[
  (x,\xi)
  \longmapsto
  \left(
    \varphi_V\circ\gamma\circ\varphi_U^{-1}(x),
    \gamma\xi
  \right).
\]
The first component is smooth and independent of \(\xi\), while the
second is the boundary homeomorphism
\(\xi\mapsto\gamma\xi\).  Thus these charts form a
\(C^{\infty,0}\) foliated atlas of leaf dimension \(m\).

\smallskip
\noindent\emph{Identification of the leaves.}
In the chart determined by \(\wti U\), write
\(P(\wti U,\eta):=q(\wti U\times\{\eta\})\subset\mc L_\eta\).
If \(P(\wti U,\eta)\) and \(P(\wti V,\zeta)\) intersect, then
\([u,\eta]=[v,\zeta]\) for some \(u\in\wti U\) and \(v\in\wti V\).
It follows that
\((v,\zeta)=(\gamma u,\gamma\eta)\) for some \(\gamma\in\Gamma\).
In particular, \(\zeta=\gamma\eta\), and hence
\(\mc L_\eta=\mc L_\zeta\).  Therefore every plaque chain starting at
\([x,\xi]\) is contained in \(\mc L_\xi\).

Conversely, let \([y,\xi]\in\mc L_\xi\), and choose a path
\(c\colon[0,1]\to\wti M\) from \(x\) to \(y\).  For every
\(t_0\in[0,1]\), there are an interval \(I_{t_0}\), a chart
\(Z_{\wti U}\), and \(\gamma\in\Gamma\) such that
\(\gamma c(I_{t_0})\subset\wti U\).  In that chart the transverse
coordinate of \(q_\xi\circ c\) on \(I_{t_0}\) is the fixed point
\(\gamma\xi\), so this part of the path lies in one plaque.  Compactness
of \([0,1]\) gives a finite subdivision into such subarcs.  Their plaques
meet successively and form a chain from \([x,\xi]\) to \([y,\xi]\).
Therefore
\(\operatorname{Leaf}([x,\xi])=\mc L_\xi\).

\smallskip
\noindent\emph{Leaf metrics and completeness.}
Use the chosen lift \(\wti U\) to give each plaque the restriction of
\(\wti g\).  This definition is independent of the chart because every
leafwise coordinate change is induced by a deck isometry.  It therefore
defines a smooth Riemannian metric on every leaf, and
\(q_\xi\colon\wti M\to\mc L_\xi\) is a local isometry.  Moreover,
\[
  q_\xi(x)=q_\xi(y)
  \quad\Longleftrightarrow\quad
  y=\gamma x
  \quad\text{for some }\gamma\in
  \Gamma_\xi:=\{\gamma\in\Gamma:\gamma\xi=\xi\}.
\]
The subgroup \(\Gamma_\xi\) acts freely and properly discontinuously by
isometries.  Hence \(\wti M\to\wti M/\Gamma_\xi\) is a Riemannian
covering.  The displayed description of the fibers shows that \(q_\xi\)
factors through a bijective local isometry
\(\overline q_\xi\colon\wti M/\Gamma_\xi\to\mc L_\xi\), which is
therefore a Riemannian isometry.  Consequently, \(q_\xi\) is a
Riemannian covering with deck group \(\Gamma_\xi\).
The leaf \(\mc L_\xi\) is connected because \(\wti M\) is connected,
and it is complete because each of its geodesics lifts to the complete
manifold \((\wti M,\wti g)\).

Finally, \eqref{eq:suspension-unit-tangent} identifies \(Z\) with \(SM\),
which is compact because \(M\) is closed.
\end{proof}

\begin{unnumberedremark}
A leaf should not be confused with the base manifold \(M\).  The
universal covering induces a well-defined projection
\[
  \Pi\colon Z\longrightarrow M,
  \quad
  \Pi([x,\eta])=\pi(x).
\]
Under \(Z\cong SM\), this is the footpoint projection.  Its restriction
\(\Pi_\xi\colon\mc L_\xi\to M\) is the covering associated with the
subgroup \(\Gamma_\xi\subseteq\Gamma\), and
\[
  \Pi_\xi\circ q_\xi=\pi,
  \qquad
  \mc L_\xi\cong\wti M/\Gamma_\xi,
  \qquad
  M=\wti M/\Gamma.
\]

Here \(\Gamma\) is a torsion-free non-elementary word-hyperbolic group.
The stabilizer of a boundary point satisfies
\[
  \Gamma_\xi=\{e\}
  \quad\text{or}\quad
  \Gamma_\xi\cong\mathbb Z
\]
(see \cite{Bowditch1998}).  Since \(\Gamma\) is not virtually
cyclic, \(\Gamma_\xi\) has infinite index.  Thus \(\Pi_\xi\) is
infinite-sheeted and every leaf is noncompact: it is isometric to
\(\wti M\) when \(\Gamma_\xi=\{e\}\), and otherwise to a cyclic quotient
of \(\wti M\).

This does not conflict with the compactness of \(Z\).  The intrinsic
topology of a leaf may be finer than its subspace topology, so a
noncompact leaf may be nonproper or even dense in \(Z\), just as an
irrational line is dense in a torus.  Intrinsically, however, every leaf
is smooth, connected, and complete, and it inherits the local curvature
bounds of \(\wti M\).  These are precisely the properties used in the
leafwise elliptic and heat-kernel arguments below.
\end{unnumberedremark}

\begin{proposition}
\label{prop:busemann-field-regularity}
Let \((M^m,g)\) be a closed Riemannian manifold with \(\sec_g\leq-1\), and
let
\[
  Z=(\wti M\times\partial_\infty\wti M)/\Gamma.
\]
The Busemann vector field on the covering space,
\[
  \widetilde V(x,\xi):=\nabla B_\xi(x)
\]
is \(\Gamma\)-equivariant and therefore descends to a leafwise unit vector
field \(V\) on \(Z\).  The maps
\[
  (x,\xi)\longmapsto\widetilde V(x,\xi),
  \quad
  (x,\xi)\longmapsto\nabla_x\widetilde V(x,\xi)
\]
are continuous on \(\wti M\times\partial_\infty\wti M\).  Moreover, for
each fixed \(\xi\), the map \(x\mapsto\widetilde V(x,\xi)\) is smooth.
Consequently,
\[
  V\in C^{1,0}(T\mc F),
  \qquad
  V|_{\mc L_\xi}\in C^\infty(T\mc L_\xi)
  \quad\text{for every }\xi.
\]
\end{proposition}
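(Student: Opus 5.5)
The proof splits into four checks: equivariance, joint continuity, leafwise smoothness, and the translation into the foliated-space regularity classes of Definition~\ref{def:Ck0}.

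\emph{Equivariance.} Use the identity \(\gamma\circ c_{x,\xi}=c_{\gamma x,\gamma\xi}\) established in Section~\ref{sec:suspension}. Combined with \(\nabla B_\xi(x)=-v_{x,\xi}\) from Proposition~\ref{prop:busemann-comparison}\textup{(i)}, it gives
\[
  \widetilde V(\gamma x,\gamma\xi)=-v_{\gamma x,\gamma\xi}=-d\gamma_x(v_{x,\xi})=d\gamma_x\bigl(\widetilde V(x,\xi)\bigr).
\]
In the foliated charts \(\psi_{\wti U}\) of Lemma~\ref{lem:suspension-foliation}, the leafwise tangent spaces are identified with \(T\wti U\) through the chosen lift. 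Leafwise coordinate changes are differentials of deck isometries, so the local representatives of \(\widetilde V\) patch to a well-defined leafwise vector field \(V\) on \(Z\). It has unit length because \(|\nabla B_\xi|=1\).

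\emph{Continuity.} Joint continuity of \((x,\xi)\mapsto\widetilde V\) and of \((x,\xi)\mapsto\nabla_x\widetilde V=\nabla^2B_\xi(x)\) (via the metric identification) is exactly Proposition~\ref{prop:busemann-comparison}\textup{(iii)} applied to \(X=\wti M\). The pinching hypothesis needed there holds because \(M\) is compact.

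\emph{Leafwise smoothness.} This is the one genuinely new point, since Proposition~\ref{prop:busemann-comparison} only gives \(C^2\). The plan is to use the classical fact that on a Hadamard manifold with bounded geometry the Busemann function is as smooth as the metric, and to justify it via horospheres. Fix \(\xi\) and \(x\). The stable horosphere through \(x\), equivalently the strong stable manifold of the geodesic flow through \(-v_{x,\xi}\) projected to \(\wti M\), is a \(C^\infty\) submanifold, because the metric is smooth; for each fixed leaf this is standard stable-manifold regularity. The unit normal field \(\nabla B_\xi\) is then smooth along each horosphere. Transporting by the gradient flow of \(B_\xi\) and observing that the level sets of \(B_\xi\) are these horospheres yields \(B_\xi\in C^\infty(\wti M)\).

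An alternative route I would try first, as it is cleaner, is PDE bootstrapping. The unit vector \(W=\nabla B_\xi\) satisfies a Riccati-type equation: its covariant derivative \(U=\nabla W\) obeys \(\nabla_W U+U^2+R(\cdot,W)W=0\) along the flow lines of \(W\). One then bootstraps regularity along the flow lines and transversally using uniform hyperbolicity. I expect this leafwise \(C^\infty\) step to be the main obstacle, since it requires either invoking the smoothness of horospheres (as in Heintze--Im Hof, where horospheres in smooth Hadamard manifolds with bounded curvature are \(C^\infty\)) or a careful bootstrap. I would cite the known smoothness result rather than reprove it.

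\emph{Conclusion.} Finally, read off the regularity classes in the charts \(\psi_{\wti U}\). There the components of \(V\) and their first \(x\)-derivatives are the coordinate expressions of \(\widetilde V\) and \(\nabla_x\widetilde V\), with Christoffel symbols that are smooth and \(\xi\)-independent. Both are continuous in \((x,\xi)\), so \(V\in C^{1,0}(T\mc F)\). On each leaf, \(V|_{\mc L_\xi}\) is the push-forward under the local isometry \(q_\xi\) of the smooth field \(\nabla B_\xi\), hence smooth.
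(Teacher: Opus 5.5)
Your equivariance, continuity, and chart-reading steps are fine and agree with the paper. The gap is in the leafwise smoothness step, which is the only new content of the proposition. The stable manifold theorem (for the Anosov geodesic flow on the compact \(SM\), lifted to \(S\wti M\)) makes the strong stable leaf a smooth immersed submanifold of \(S\wti M\). Two further facts are needed, and you supply neither.

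\textbf{(a)} To conclude that the footpoint projection of that leaf, the horosphere, is a smooth hypersurface of \(\wti M\), you must show that \(dp\) is injective on \(E^s\). This is the paper's Jacobi-field argument: a stable Jacobi field with \(J(0)=0\) vanishes identically, because \(\sec\le-1\) makes \(\tfrac12|J|^2\) strictly convex unless \(J\equiv0\).

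\textbf{(b)} Smoothness of each level set separately gives no control of derivatives transverse to the level sets. The gradient flow of the \(C^2\) function \(B_\xi\) is a priori only \(C^1\), so ``transporting by the gradient flow'' does not yield \(C^\infty\) as stated.

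Two smaller corrections. The horosphere centered at \(\xi\) through \(x\) is the projection of \(W^{ss}(v_{x,\xi})\), not of \(W^{ss}(-v_{x,\xi})\); the footpoints of the latter form the horosphere centered at the backward endpoint. Also, the paper cites Heintze--Im Hof only for \(C^2\) regularity, so that citation does not carry the \(C^\infty\) step. In this setting the \(C^\infty\) statement comes from the Anosov stable manifold theorem.

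Point (b) can be repaired as follows. Since \(|\nabla B_\xi|=1\), the gradient lines are unit-speed geodesics. Hence the flow out of \(H_0=B_\xi^{-1}(0)\) is the normal exponential map \(F(t,y)=\exp_y(tN(y))\), which is smooth once \(H_0\) and its unit normal \(N\) are. Moreover, \(F\) is a \(C^1\) diffeomorphism \(\R\times H_0\to\wti M\), with inverse \(x\mapsto(B_\xi(x),\Phi_{-B_\xi(x)}(x))\), where \(\Phi_t\) is the gradient flow. So \(dF\) is invertible, and the inverse function theorem makes \(B_\xi=\mathrm{pr}_1\circ F^{-1}\) smooth.

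The paper avoids this two-stage argument by working with the weak-stable leaf \(W^{ws}(\xi)=\{v_{x,\xi}:x\in\wti M\}\), which already contains the flow direction. There \(dp_v\) sends \(X(v)\) to \(v\) and maps \(E^s(v)\) injectively into \(v^\perp\). So \(p|_{W^{ws}(\xi)}\) is a bijective local diffeomorphism onto \(\wti M\), and its inverse \(x\mapsto v_{x,\xi}=-\nabla B_\xi(x)\) is smooth in one step. Your Riccati-bootstrap alternative is too vague to count as a proof; it would need uniform bounds on covariant derivatives of curvature and a proof of higher-order convergence of \(b_R\to B_\xi\).
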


\begin{proof}
\emph{Equivariance and unit length.}
For \(\gamma\in\Gamma\), the Busemann functions
\(B_{\gamma\xi}\circ\gamma\) and \(B_\xi\) differ by a constant.  Since
\(\gamma\) is an isometry, taking gradients gives
\[
  \nabla B_{\gamma\xi}(\gamma x)
  =d\gamma_x\bigl(\nabla B_\xi(x)\bigr).
\]
Thus \(\widetilde V\) is equivariant and defines \(V\) on the quotient.
Equation~\eqref{eq:unit-busemann-gradient} gives \(|V|=1\).

\smallskip
\noindent\emph{Joint continuity to first leafwise order.}
Since \(M\) is compact, there is \(A\geq1\) such that
\[
  -A^2\leq\sec_g\leq-1.
\]
Proposition~\ref{prop:busemann-comparison}\textup{(iii)} therefore
applies to \(\wti M\) and gives joint continuity of \(\widetilde V\)
and its covariant derivative in the \(x\)-variable:
\[
  \nabla_x\widetilde V=(\nabla^2B_\xi)^\sharp.
\]
In a flow box from Lemma~\ref{lem:suspension-foliation}, let \(V^i\)
denote the leafwise coordinate components.  Then
\[
  \partial_jV^i
  =
  (\nabla_jV)^i-\Gamma^i_{jk}V^k.
\]
The Christoffel symbols in these lifted coordinates are smooth in the
leaf variable and independent of the boundary coordinate.  Hence both
\(V^i\) and \(\partial_jV^i\) are jointly continuous, which is precisely
\(V\in C^{1,0}(T\mc F)\).

\smallskip
\noindent\emph{Smoothness for fixed \(\xi\).}
Before taking the quotient, \(\Phi\) sends the slice
\(\wti M\times\{\xi\}\) onto
\[
  W^{ws}(\xi)
  =
  \{v_{x,\xi}:x\in\wti M\}
  \subset S\wti M.
\]
Two unit vectors lie in the same weak-stable leaf exactly when their
forward geodesic rays have the same endpoint at infinity.  The adjective
``weak'' indicates that a shift in the geodesic-flow parameter is
allowed; fixing also the Busemann level gives the strong-stable leaves.
Under \(\overline\Phi\colon Z\to SM\), the leaf \(\mc L_\xi\)
corresponds to the immersed weak-stable leaf
\(W^{ws}(\xi)/\Gamma_\xi\).

The geodesic flow on \(SM\) is a smooth Anosov flow
(see \cite{Anosov1967}).  By the stable manifold theorem, its weak-stable
leaves are smooth immersed submanifolds (see
\cite{HirschPughShub1977}, Theorem~4.1\textup{(e)} and the remarks
following it); their transverse dependence is generally only H\"older.
The lifted weak-stable foliation on \(S\wti M\) has the same leafwise
regularity.  Since there is a unique ray from each \(x\) to \(\xi\), the
footpoint projection
\(p\colon S\wti M\to\wti M\) restricts to a bijection
\[
  p|_{W^{ws}(\xi)}\colon W^{ws}(\xi)\longrightarrow\wti M.
\]

To prove that this bijection is a diffeomorphism, fix
\(v=v_{x,\xi}\).  The tangent space of the weak-stable leaf is
\[
  T_vW^{ws}(\xi)=\mathbb RX(v)\oplus E^s(v),
\]
where \(X\) is the geodesic vector field, and \(dp_v(X(v))=v\).
A vector \(z\in E^s(v)\) is represented by a normal Jacobi field \(J_z\)
with \(dp_v(z)=J_z(0)\) and \(J_z(t)\to0\).  If \(J_z(0)=0\), then the
Jacobi equation and \(\sec_g\leq-1\) show that
\(\frac12|J_z|^2\) is strictly convex unless \(J_z\equiv0\).  Since its
value and first derivative vanish at \(t=0\), strict convexity would make
it increase and prevent \(J_z(t)\to0\).  Thus \(J_z\equiv0\).
Furthermore, \(dp_v(E^s(v))\perp v\), whereas \(dp_v(X(v))=v\).
Consequently, \(dp_v\) is injective on
\(\mathbb RX(v)\oplus E^s(v)\), and hence is an isomorphism by dimension.
The inverse function theorem now shows that
\(p|_{W^{ws}(\xi)}\) is a local diffeomorphism.  Since it is also
bijective, it is a diffeomorphism.  Its inverse
\[
  \wti M\longrightarrow W^{ws}(\xi),
  \quad
  x\longmapsto v_{x,\xi},
\]
is therefore smooth.  Finally, \eqref{eq:unit-busemann-gradient} gives
\[
  \nabla B_\xi(x)=-v_{x,\xi},
\]
so \(x\mapsto\widetilde V(x,\xi)\) is smooth.  The leaf charts from
Lemma~\ref{lem:suspension-foliation} then show that
\(V|_{\mc L_\xi}\) is smooth.
\end{proof}

\begin{unnumberedremark}
The two regularity statements serve different purposes.
\(V\in C^{1,0}\) means that \(V\) and its first leafwise derivative vary
continuously in the transverse parameter; it does not require any
transverse derivative.  Smoothness of \(V|_{\mc L_\xi}\) gives
derivatives of every order on a fixed leaf, without asserting that the
higher derivatives vary continuously with \(\xi\).  The first property
provides continuous coefficients for the global leafwise diffusion; the
second permits classical heat-kernel arguments on an individual leaf.
\end{unnumberedremark}

\subsection{Leafwise differential calculus}

All derivatives in this subsection are taken only in the leaf variable.
For \(\xi\in\partial_\infty\wti M\), let
\[
  \iota_\xi\colon\wti M\longrightarrow\mc L_\xi,
  \quad \iota_\xi(x)=[x,\xi].
\]
Lemma~\ref{lem:suspension-foliation} identifies this map with the
Riemannian covering
\[
  \wti M\longrightarrow
  \wti M/\Gamma_\xi\cong\mc L_\xi.
\]

\smallskip
\noindent\emph{Leafwise regularity.}
For a function \(F\colon Z\to\mathbb R\), set
\[
  F_\xi:=F\circ\iota_\xi\colon\wti M\longrightarrow\mathbb R.
\]
For an integer \(k\geq0\), we say that \(F\) is \(C^k\)
\emph{along the leaves} if \(F|_{\mc L}\) is \(C^k\) for every leaf
\(\mc L\).  Since \(\iota_\xi\) is a covering, this is equivalent to
\[
  F_\xi\in C^k(\wti M)
  \quad
  \text{for every }\xi\in\partial_\infty\wti M.
\]
This condition concerns one leaf at a time and imposes no continuity as
the leaf varies.

The notation \(F\in C^{k,0}(Z)\) is understood in the sense of
Definition~\ref{def:Ck0}.  It is stronger than being \(C^k\) along
each leaf separately, because the leafwise derivatives must also vary
continuously in the transverse parameter.  As before, no transverse
derivative is required.

\smallskip
\noindent\emph{Leafwise differential operators.}
If \(F\) is \(C^1\) along the leaves, the leafwise metric defines
\(\nabla^{\mathrm{leaf}}F\) by restricting \(F\) to each leaf.  If
\(F\) is \(C^2\) along the leaves, define
\(\Delta^{\mathrm{leaf}}F\) in the same way.  The covering
\(\iota_\xi\) is a local isometry, so the naturality of the gradient and
Laplacian gives
\begin{equation}\label{eq:leafwise-lift-calculus}
\begin{aligned}
  d(\iota_\xi)_x\bigl(\nabla F_\xi(x)\bigr)
  &=\nabla^{\mathrm{leaf}}F(\iota_\xi(x)),\\
  \Delta F_\xi
  &=(\Delta^{\mathrm{leaf}}F)\circ\iota_\xi,
\end{aligned}
\end{equation}
where the unadorned \(\nabla\) and \(\Delta\) are the Riemannian operators on
\((\wti M,\wti g)\).  If \(dV_{\mc L_\xi}\) denotes the Riemannian
volume density of the leaf, then
\[
  \iota_\xi^*dV_{\mc L_\xi}=dV_{\wti g}.
\]
Consequently, a pointwise leafwise differential identity may be checked
after pulling it back to \(\wti M\) with \(\xi\) fixed.  Integration by
parts may likewise be carried out in plaque coordinates, or intrinsically
on a leaf under the usual compact-support or integrability hypotheses.

\section{The Busemann defect and the limiting measure}
\label{sec:limiting-measure}

Pushing the quadratic mass of a minimizing sequence to the compact
suspension \(Z\) prevents loss of mass.  The Busemann defect descends to
\(Z\), and the limiting measure is supported in its zero set.

Define the \emph{Busemann defect} on the covering space by
\[
  \widetilde D(x,\xi)=\Delta B_\xi(x)-(m-1),
  \quad (x,\xi)\in\wti M\times\partial_\infty\wti M.
\]
By Proposition~\ref{prop:busemann-comparison}, \(\widetilde D\) is
continuous and nonnegative.

For \(\gamma\in\Gamma\), the functions
\(B_{\gamma\xi}\circ\gamma\) and \(B_\xi\) differ by a constant.  Since
\(\gamma\) is an isometry,
\[
  \Delta B_{\gamma\xi}(\gamma x)=\Delta B_\xi(x).
\]
Thus \(\widetilde D\) is \(\Gamma\)-invariant and descends to the
continuous function
\begin{equation}\label{eq:busemann-defect}
  D\colon Z\longrightarrow[0,\infty),
  \quad
  D([x,\xi])=\Delta B_\xi(x)-(m-1)
\end{equation}
for every \((x,\xi)\in\wti M\times\partial_\infty\wti M\).  The set
\(D^{-1}(0)\) consists of the classes represented by pairs
\((x,\xi)\) for which \(\Delta B_\xi(x)=m-1\).

To construct the limiting measure, assume
\(\lambda_1(\wti M)=(m-1)^2/4\), and fix
\(\xi_0\in\partial_\infty\wti M\).  Choose a normalized minimizing sequence
\(u_j\) as in Proposition~\ref{prop:defects}, with \(X=\wti M\).  The
measure \(u_j^2dV\) is a probability measure on \(\wti M\).  Push it to
the compact suspension under the continuous map
\[
  \iota_{\xi_0}\colon\wti M\longrightarrow Z,
  \quad x\longmapsto[x,\xi_0],
\]
and define
\begin{equation}\label{eq:mu-j}
  \mu_j=(\iota_{\xi_0})_*(u_j^2dV).
\end{equation}
Each \(\mu_j\) is a probability measure, and for \(f\in C(Z)\),
\begin{equation}\label{eq:mu-j-testing}
  \int_Z f\,d\mu_j
  =\int_{\wti M}f([x,\xi_0])\,u_j(x)^2\,dV(x).
\end{equation}

\begin{proposition}
\label{prop:defect-support}
After passing to a subsequence, \(\mu_{j_k}\rightharpoonup\mu\), where
\(\mu\) is a probability measure satisfying
\[
  \int_ZD\,d\mu=0,
  \qquad
  \supp\mu\subseteq D^{-1}(0).
\]
\end{proposition}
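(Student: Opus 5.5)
The argument has three steps: extract a weak limit using compactness of \(Z\), pass the second defect to the limit, and convert the vanishing integral into a support statement.

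\emph{Compactness.} By Lemma~\ref{lem:suspension-foliation} and the identification \eqref{eq:suspension-unit-tangent}, \(Z\cong SM\) is a compact metrizable space. The probability measures on \(Z\) are therefore weak-\(*\) sequentially compact (Riesz representation together with Banach--Alaoglu and separability of \(C(Z)\), or Prokhorov's theorem). This gives a subsequence \(\mu_{j_k}\rightharpoonup\mu\). Testing against \(f\equiv1\) gives \(\mu(Z)=\lim\mu_{j_k}(Z)=1\). This is exactly where compactness of \(Z\) prevents mass from escaping, in contrast to the situation on \(\wti M\).

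\emph{Passing the defect to the limit.} By Proposition~\ref{prop:busemann-comparison}\textup{(iii)} and the \(\Gamma\)-invariance shown above, \(D\) is a continuous nonnegative function on \(Z\), and it is bounded because \(Z\) is compact. Hence \(D\in C(Z)\) is an admissible test function. Applying \eqref{eq:mu-j-testing} with \(f=D\), and using \(D([x,\xi_0])=\Delta B_{\xi_0}(x)-(m-1)\), we get
\[
  \int_Z D\,d\mu_j=\int_{\wti M}\bigl(\Delta B_{\xi_0}-(m-1)\bigr)u_j^2\,dV .
\]
By \eqref{eq:second-defect} this tends to \(0\). Weak convergence then yields \(\int_Z D\,d\mu=\lim_k\int_Z D\,d\mu_{j_k}=0\).

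\emph{Support.} Since \(D\geq0\) is continuous, the set \(U=\{D>0\}\) is open, and \(U=\bigcup_n\{D>1/n\}\). Chebyshev's inequality gives \(\mu(\{D>1/n\})\leq n\int_Z D\,d\mu=0\), so \(\mu(U)=0\). The support of \(\mu\) is the complement of the largest open \(\mu\)-null set, so \(\supp\mu\subseteq Z\setminus U=D^{-1}(0)\).

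There is no serious obstacle. The only point requiring care is that \(D\) is genuinely continuous on \(Z\), not merely along leaves, since this is what makes it a legitimate test function for weak convergence. That continuity comes from the joint continuity of \((x,\xi)\mapsto\Delta B_\xi(x)\) in Proposition~\ref{prop:busemann-comparison}\textup{(iii)}, combined with openness of the quotient map \(q\).
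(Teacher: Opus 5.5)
Your proposal is correct and follows the paper's proof essentially step for step: weak sequential compactness of probability measures on the compact metrizable \(Z\), testing the continuous function \(D\) against \(\mu_{j_k}\) via the pushforward formula and the second defect, and then using \(D\geq0\) to confine the support (your Chebyshev argument simply spells out the paper's one-line conclusion). One small remark: continuity of \(D\) on \(Z\) already follows from the universal property of the quotient topology applied to the \(\Gamma\)-invariant continuous function \(\widetilde D\), so openness of \(q\) is not needed.
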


\begin{proof}
The probability measures on the compact metrizable space \(Z\) are
weakly sequentially compact (see \cite{Bogachev2007},
Theorem~8.6.7).  Thus some subsequence converges weakly to a probability
measure \(\mu\).  Continuity of \(D\), weak convergence, the pushforward
formula \eqref{eq:mu-j-testing}, and the definition
\eqref{eq:busemann-defect} give
\[
  \int_ZD\,d\mu
  =\lim_{k\to\infty}\int_ZD\,d\mu_{j_k}
  =\lim_{k\to\infty}
    \int_{\wti M}\bigl(\Delta B_{\xi_0}-(m-1)\bigr)u_{j_k}^2\,dV
  =0.
\]
The last equality is \eqref{eq:second-defect}.  Since \(D\geq0\),
the identity forces \(D=0\) on \(\supp\mu\), which proves the stated
support containment.
\end{proof}

\section{Leafwise stationarity and support saturation}
\label{sec:stationarity}

Fix a weak subsequential limit \(\mu\) as in
Proposition~\ref{prop:defect-support}.  The first defect gives
infinitesimal stationarity for a leafwise drift operator.  The general
results of Section~\ref{sec:candel-semigroup-construction} then imply
stationarity and leaf saturation of \(\supp\mu\).

\subsection{The leafwise drift operator}

Let \(V\) be the Busemann field from
Proposition~\ref{prop:busemann-field-regularity}.  It is \(C^{1,0}\),
smooth on each leaf, and its lift to the covering
\(\iota_\xi\colon\wti M\to\mc L_\xi\) is \(\nabla B_\xi\).

Let \(f\in C(Z)\), and assume that \(f|_{\mc L}\) is \(C^2\) on every leaf
\(\mc L\).  Define
\begin{equation}\label{eq:leafwise-L}
  Lf
  :=\Delta^{\mathrm{leaf}}f
    -(m-1)\langle V,\nabla^{\mathrm{leaf}}f\rangle.
\end{equation}
By \eqref{eq:leafwise-lift-calculus}, its pullback to \(\wti M\) is
\begin{equation}\label{eq:leafwise-L-lift}
  (Lf)\circ\iota_\xi
  =\Delta(f\circ\iota_\xi)
   -(m-1)\bigl\langle\nabla B_\xi,
                 \nabla(f\circ\iota_\xi)\bigr\rangle.
\end{equation}
Set \(A:=L/2\).  We use its classical continuous domain
\begin{equation}\label{eq:generator-test-domain}
  \mc D_0(A)
  :=\left\{f\in C(Z):
    f|_{\mc L}\in C^2(\mc L)\text{ for every leaf }\mc L,
    \quad Af\in C(Z)
  \right\}.
\end{equation}
Since \(A=L/2\), the condition \(Af\in C(Z)\) is equivalent to
\(Lf\in C(Z)\).  This is the classical domain used in
Section~\ref{sec:candel-semigroup-construction} and also the test domain
in Suzaki's definition of an \(A\)-harmonic measure (see
\cite{Suzaki2015}, p.~256).

\begin{lemma}
\label{lem:leafwise-L-elliptic}
In every quotient product chart, the principal coefficients of \(L\) are
of class \(C^{\infty,0}\), its first-order coefficients are of
class \(C^{1,0}\), and all coefficients are smooth on each plaque.  Its
principal symbol is \((g^{\mathrm{leaf}})^{-1}\), and it has no
zeroth-order term.  In particular, \(L\) is uniformly elliptic along the
leaves and \(L1=0\).
\end{lemma}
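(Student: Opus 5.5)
The plan is to compute \(L\) explicitly in a quotient product chart \(\psi_{\wti U}\colon Z_{\wti U}\to B_U\times\partial_\infty\wti M\) from Lemma~\ref{lem:suspension-foliation} and read off its coefficients. In such a chart a point is \((x,\xi)\), with \(x\in B_U\) the leaf coordinate (pulled back through \(\varphi_U\) to the lift \(\wti U\)) and \(\xi\) the transverse coordinate. By \eqref{eq:leafwise-L-lift}, \(L\) acts on the plaque \(B_U\times\{\xi\}\) as the operator \(\Delta_{\wti g}-(m-1)\langle\nabla B_\xi,\nabla\,\cdot\,\rangle\) on \(\wti U\). In local coordinates this becomes
\[
  Lf=g^{ij}(x)\,\partial_i\partial_jf
   -\Bigl(g^{ij}(x)\Gamma^k_{ij}(x)+(m-1)V^k(x,\xi)\Bigr)\partial_kf,
\]
where \(g_{ij}\) are the components of \(\wti g\) in the coordinates \(\varphi_U\) and \(V^k\) are the leafwise components of the Busemann field.

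The coefficients can then be checked one at a time. The principal coefficients \(g^{ij}\) are smooth in \(x\) and independent of \(\xi\), so they are trivially \(C^{\infty,0}\), and the principal symbol is \((g^{\mathrm{leaf}})^{-1}\). The first-order coefficients are \(-g^{ij}\Gamma^k_{ij}\), which is smooth and independent of \(\xi\), plus \(-(m-1)V^k\). The term \(V^k\) is \(C^{1,0}\) by Proposition~\ref{prop:busemann-field-regularity}, which gives joint continuity of \(V^i\) and \(\partial_jV^i\). For fixed \(\xi\) it is smooth in \(x\), by the same proposition. Hence every coefficient is smooth on each plaque. No zeroth-order term appears, so \(L1=0\) follows immediately.

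Two routine consistency points remain. First, the coordinate changes between charts are \((x,\xi)\mapsto(\varphi_V\circ\gamma\circ\varphi_U^{-1}(x),\gamma\xi)\), and \(\gamma\) is an isometry that intertwines \(\nabla B_\xi\) with \(\nabla B_{\gamma\xi}\). Since the operator is defined intrinsically leafwise, the chart expressions transform correctly, and the stated regularity classes are preserved because the leaf component of each coordinate change is smooth and independent of \(\xi\).

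Second, uniform ellipticity along the leaves needs a uniform bound \(c|\zeta|^2\le g^{ij}\zeta_i\zeta_j\le C|\zeta|^2\), uniform across charts. For this I would take a finite cover of \(M\) by evenly covered coordinate balls with relatively compact, slightly shrunk coordinate images. Then \(g^{ij}\) is bounded above and below on finitely many compact sets, independently of \(\xi\), and compactness of \(Z\cong SM\) makes finitely many such charts suffice. Intrinsically, the symbol is just the leaf metric, so ellipticity holds with constant \(1\) in orthonormal frames.

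The only mildly delicate step is the \(C^{1,0}\) claim for the drift, and it is exactly Proposition~\ref{prop:busemann-field-regularity}. I therefore expect no real obstacle, only careful bookkeeping about which chart lifts are used.
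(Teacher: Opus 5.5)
Your proposal is correct and follows essentially the same route as the paper. Both write \(L\) in a quotient product chart and read off the principal matrix \((g^{ij})\), which is independent of the boundary coordinate and hence \(C^{\infty,0}\); you write the Laplace--Beltrami drift as \(-g^{ij}\Gamma^k_{ij}\) instead of the paper's equivalent divergence form, get the \(C^{1,0}\) and plaquewise smoothness of \(-(m-1)V^k\) from Proposition~\ref{prop:busemann-field-regularity}, and obtain uniform ellipticity from a finite atlas on the compact space \(Z\), exactly as the paper does.
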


\begin{proof}
In a foliated chart \(B\times T\), write
\[
  g^{\mathrm{leaf}}=g_{ij}(y,\tau)\,dy^i\,dy^j,
  \quad
  V=V^j(y,\tau)\partial_j.
\]
The standard coordinate formula for the Laplace--Beltrami operator gives
\[
  Lf
  =g^{ij}\partial_i\partial_jf
   +\left[
      \frac1{\sqrt{\det g}}\,
      \partial_i\bigl(\sqrt{\det g}\,g^{ij}\bigr)
      -(m-1)V^j
    \right]\partial_jf.
\]
The leafwise metric is of class \(C^{\infty,0}\), while
Proposition~\ref{prop:busemann-field-regularity} gives
\(V\in C^{1,0}\) and shows that \(V\) is smooth on each plaque.  Hence
\(g^{ij}\in C^{\infty,0}\), the displayed first-order coefficients belong
to \(C^{1,0}\), and all coefficients are smooth on each plaque.  The
principal matrix is \((g^{ij})\), so the principal symbol is
\((g^{\mathrm{leaf}})^{-1}\).  Its positive definiteness and the
compactness of \(Z\) give, after choosing a finite foliated atlas, a
constant \(\theta\in(0,1]\) such that
\[
  \theta|\zeta|^2
  \leq g^{ij}(y,\tau)\zeta_i\zeta_j
  \leq\theta^{-1}|\zeta|^2.
\]
Finally, the displayed formula contains no zeroth-order term, and hence
\(L1=0\).
\end{proof}

Lemma~\ref{lem:classical-gradient-control}, applied with
\(\ms A=A\), shows that every \(f\in\mc D_0(A)\) has a continuous,
uniformly bounded leafwise gradient.  This is the only domain
regularity needed in the first-defect argument.

\subsection{The leafwise diffusion}

The general results of Section~\ref{sec:candel-semigroup-construction}
apply to \(A=L/2\).  We state their consequences for the suspension.

\begin{lemma}
\label{lem:saturated-support}
The maximal continuous realization of \(A\) generates a conservative
Feller Markov semigroup \(\{P_t\}_{t\geq0}\) on \(C(Z)\).  Moreover:
\begin{enumerate}[label=\textup{(\roman*)}]
\item On every leaf, \(P_t\) is the intrinsic minimal heat semigroup of
\(A\).  Its heat kernel is smooth, strictly positive, and conservative
with respect to the principal metric \(g_A=2g^{\mathrm{leaf}}\).

\item A Borel probability measure \(\nu\) on \(Z\) satisfies
\[
  \int_ZP_tf\,d\nu=\int_Zf\,d\nu
  \quad(f\in C(Z),\ t\geq0)
\]
if and only if
\begin{equation}\label{eq:infinitesimal-stationarity}
  \int_ZLf\,d\nu=0
  \quad\text{for every }f\in\mc D_0(A).
\end{equation}

\item If \(\nu\) satisfies the equivalent conditions in
\textup{(ii)}, then
\[
  z\in\supp\nu\quad\Longrightarrow\quad
  \mc L(z)\subset\supp\nu.
\]
\end{enumerate}
\end{lemma}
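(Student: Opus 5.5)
This lemma is essentially an application of Candel's construction (\cite{Candel2003}, Theorem~4.14 and Propositions~4.15--4.16) to the operator \(A=L/2\). By Lemma~\ref{lem:leafwise-L-elliptic}, \(A\) is uniformly elliptic along the leaves. Its principal coefficients are \(C^{\infty,0}\), its drift is \(C^{1,0}\) and smooth on each plaque, and it has no zeroth-order term, so \(A1=0\). The plan is to verify Candel's hypotheses at this finite transverse regularity and then read off (i)--(iii). The details are deferred to Section~\ref{sec:candel-semigroup-construction}.

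\emph{Generation.} I would apply the contraction form of the Hille--Yosida theorem on the Banach space \(C(Z)\), which is available because \(Z\) is compact. The domain \(\mc D_0(A)\) is dense, since leafwise-smooth partitions of unity in foliated charts give \(C^{\infty,0}\) functions that approximate any continuous function. Dissipativity follows from the leafwise maximum principle. At a maximum point \(z\) of \(f\), the restriction \(f|_{\mc L(z)}\) attains a local maximum, so \(Af(z)\leq0\). Hence \(\|(\lambda-A)f\|\geq\lambda\|f\|\).

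The main obstacle is the range condition, \((\lambda-A)\mc D_0(A)\supseteq C(Z)\) or at least a dense subset of it. My first attempt would solve \((\lambda-A)f=h\) leaf by leaf. For \(h\in C^{\infty,0}\), I would take the minimal resolvent on each complete leaf. That resolvent exists and is unique among bounded solutions because the drift \(-(m-1)V\) is bounded, the leaf metric has bounded geometry, and a Khasminskii-type Lyapunov argument using \(r\) applies.

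Transverse continuity of \(f\) would then come from local Schauder estimates in plaques, which are uniform because the coefficient bounds are uniform, together with uniqueness of bounded solutions. Any subsequential limit in a plaque must coincide with the solution on the limiting plaque. This also yields the gradient control of Lemma~\ref{lem:classical-gradient-control}. Closing \(A\) on its maximal continuous realization then gives the Feller semigroup, and \(P_t1=1\) follows from \(A1=0\) together with conservativeness.

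\emph{Part (i).} Uniqueness of bounded solutions of the leafwise Cauchy problem identifies \(P_t\) with the minimal heat semigroup of the complete leaf. Smoothness of the kernel comes from parabolic regularity with coefficients that are smooth on a fixed leaf (Proposition~\ref{prop:busemann-field-regularity}). Strict positivity comes from the parabolic Harnack inequality and connectedness of the leaf. Stochastic completeness with respect to \(g_A=2g^{\mathrm{leaf}}\) follows from the curvature lower bound and the bounded drift.

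\emph{Part (ii).} For the forward direction, differentiate \(\int P_tf\,d\nu\) at \(t=0\) for \(f\in\mc D_0(A)\). For the converse, use the fact that \(\mc D_0(A)\) is a core. Writing \(\frac{d}{dt}\int P_tf\,d\nu=\int AP_tf\,d\nu\) with \(P_tf\) in the domain, this derivative vanishes after approximating \(P_tf\) by elements of the core in the graph norm.

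\emph{Part (iii).} Let \(z\in\supp\nu\) and \(w\in\mc L(z)\), and suppose \(w\notin\supp\nu\). Choose \(0\leq f\in C(Z)\) with \(f(w)>0\) and \(\supp f\cap\supp\nu=\varnothing\). Then \(\int P_tf\,d\nu=\int f\,d\nu=0\), so \(P_tf=0\) on \(\supp\nu\), in particular at \(z\). But \(P_tf(z)=\int_{\mc L(z)}p_t(z,y)f(y)\,dV(y)>0\), because the kernel is strictly positive and \(f>0\) near \(w\) in the leaf topology, which is finer than the subspace topology. This contradiction shows \(w\in\supp\nu\).
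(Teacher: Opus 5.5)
Your top-level reduction matches the paper's. Lemmas~\ref{lem:suspension-foliation} and~\ref{lem:leafwise-L-elliptic} verify the hypotheses, and (i)--(iii) are then read off from Theorem~\ref{thm:candel-diffusion}, Proposition~\ref{prop:stationarity-equivalence} and Proposition~\ref{prop:stationary-support-saturated}. Your arguments for (ii) and (iii) are essentially those two propositions.

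\textbf{The gap: transverse continuity in the range condition.} You claim that a subsequential Schauder limit on nearby plaques ``must coincide with the solution on the limiting plaque'' by uniqueness of bounded solutions. Uniqueness of bounded solutions is a statement about an entire complete leaf. Your limit exists only on one plaque, where \((\lambda-A)F=h\) has infinitely many local solutions, so nothing pins it down. You must take the limit along whole leaves.

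In the suspension this is possible because \(\wti M\times\partial_\infty\wti M\) is a global product. If \(\xi_k\to\xi\), the lifts \(F_{\xi_k}=f\circ\iota_{\xi_k}\) are bounded by \(\|h\|_\infty/\lambda\) and solve equations on all of \(\wti M\). Their drifts \(\nabla B_{\xi_k}\) are uniformly Lipschitz (Hessian comparison) and converge locally uniformly. Uniform interior Schauder estimates and a diagonal exhaustion then give a \(C^2_{\mathrm{loc}}(\wti M)\) limit solving the \(\xi\)-equation. Uniqueness of bounded solutions on \(\wti M\) itself, not merely on \(\wti M/\Gamma_\xi\), identifies this limit with \(F_\xi\).

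In a general foliated space with holonomy the limit would live only on a holonomy cover. This is why the paper, following Candel, avoids any Liouville-type theorem at this stage:
\begin{enumerate}
\item it builds extremal Dirichlet-exhaustion solutions \(u_-\le u_+\);
\item it proves their semicontinuity by foliated thickening in the holonomy cover;
\item it uses compactness of \(Z\) and the maximum principle on the upper semicontinuous function \(u_+-u_-\) to force \(u_-=u_+\).
\end{enumerate}
Your route needs completeness, a Ricci lower bound and bounded drift already at this step; the paper uses them only for mass conservation.

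\textbf{A smaller omission.} Hille--Yosida applied to \((A,\mc D_0(A))\) produces the closure of \(A|_{\mc D_0(A)}\) as generator. To conclude that the \emph{maximal} realization generates, as the statement asserts, you still need injectivity of \(\lambda-A_{\max}\). This follows from elliptic regularity for continuous distributional solutions of \(\lambda f=Af\), followed by the maximum principle on compact \(Z\).

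\textbf{Part (i) is a valid, shorter alternative.} Your identification via uniqueness for the leafwise Cauchy problem bypasses the duplicated-leaf construction. Since the generator acts leafwise as the distributional operator, \(P_tf|_L\) is a bounded classical solution of the leafwise heat equation attaining \(f|_L\) uniformly. Uniqueness of bounded solutions then identifies it with the minimal heat semigroup applied to \(f|_L\); this is the Lyapunov argument of Step~6 in the proof of Theorem~\ref{thm:candel-diffusion}\textup{(iii)}. Taking \(f=1\) recovers conservation of the leaf kernel. The paper instead first identifies the semigroups on \(C_0(\bar L)\), where no growth-class uniqueness is needed, and isolates the geometric input in the final conservation step.
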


\begin{proof}
By Lemma~\ref{lem:suspension-foliation}, \(Z\) is a compact
\(C^{\infty,0}\) foliated space.  Lemma~\ref{lem:leafwise-L-elliptic}
verifies all coefficient hypotheses for \(A\), and its principal matrix
is \((g^{ij}/2)\); hence \(g_A=2g^{\mathrm{leaf}}\).
Part~\textup{(i)} follows from
Theorem~\ref{thm:candel-diffusion}\textup{(ii)}--\textup{(iii)}.
Since \(L=2A\), part~\textup{(ii)} is
Proposition~\ref{prop:stationarity-equivalence}, and part~\textup{(iii)}
is Proposition~\ref{prop:stationary-support-saturated}.
\end{proof}

\subsection{Stationarity from the first defect}

We pass the first-defect identity to the weak limit.  Differentiating
\(u_j^2\) gives exactly the coefficient \(m-1\) appearing in \(L\).

\begin{proposition}
\label{prop:stationarity-defect}
Every weak subsequential limit \(\mu\) from
Proposition~\ref{prop:defect-support} satisfies
\begin{equation}\label{eq:mu-infinitesimal-stationarity}
  \int_ZLf\,d\mu=0
  \quad\text{for every }f\in\mc D_0(A).
\end{equation}
Moreover, its support is leaf-saturated; explicitly,
\[
  z\in\supp\mu\quad\Longrightarrow\quad
  \mc L(z)\subset\supp\mu.
\]
\end{proposition}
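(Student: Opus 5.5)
Fix \(f\in\mc D_0(A)\) and set \(f_0:=f\circ\iota_{\xi_0}\), which is \(C^2\) on \(\wti M\). By \eqref{eq:leafwise-L-lift} and \eqref{eq:mu-j-testing},
\[
  \int_ZLf\,d\mu_j
  =\int_{\wti M}\Bigl(\Delta f_0-(m-1)\langle\nabla B_{\xi_0},\nabla f_0\rangle\Bigr)u_j^2\,dV.
\]
Since \(u_j\) has compact support, integrating the Laplacian term by parts gives
\[
  \int_{\wti M}(\Delta f_0)u_j^2\,dV
  =-2\int_{\wti M}u_j\langle\nabla u_j,\nabla f_0\rangle\,dV.
\]
Therefore
\[
  \int_ZLf\,d\mu_j
  =-2\int_{\wti M}u_j\Bigl\langle\nabla u_j+\tfrac{m-1}{2}u_j\nabla B_{\xi_0},\,\nabla f_0\Bigr\rangle\,dV.
\]
This is where the coefficient \(m-1\) of \(L\) matches the first defect. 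By Cauchy--Schwarz, the right-hand side is bounded in absolute value by
\[
  2\,\|\nabla^{\mathrm{leaf}}f\|_\infty\,\|u_j\|_{L^2}
  \Bigl\|\nabla u_j+\tfrac{m-1}{2}u_j\nabla B_{\xi_0}\Bigr\|_{L^2}.
\]
Here \(|\nabla f_0|=|\nabla^{\mathrm{leaf}}f|\circ\iota_{\xi_0}\) is uniformly bounded by Lemma~\ref{lem:classical-gradient-control}. Since \(\|u_j\|_{L^2}=1\), \eqref{eq:first-defect} shows that this bound tends to \(0\).

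Next I pass to the limit along the subsequence \(\mu_{j_k}\rightharpoonup\mu\). Because \(f\in\mc D_0(A)\), the function \(Lf=2Af\) lies in \(C(Z)\), so weak convergence gives
\[
  \int_ZLf\,d\mu=\lim_k\int_ZLf\,d\mu_{j_k}=0.
\]
This proves \eqref{eq:mu-infinitesimal-stationarity}. Lemma~\ref{lem:saturated-support}\textup{(ii)} then shows that \(\mu\) is \(P_t\)-stationary, and Lemma~\ref{lem:saturated-support}\textup{(iii)} gives leaf saturation of \(\supp\mu\).

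The integration by parts itself needs only \(f_0\in C^2\) and compact support of \(u_j\), so it is unproblematic. The one delicate point is the uniform bound on \(\nabla^{\mathrm{leaf}}f\) for a general element of \(\mc D_0(A)\), whose leafwise gradient is a priori only defined leaf by leaf. I would simply invoke Lemma~\ref{lem:classical-gradient-control}, which is exactly the domain regularity announced for this step. If needed, one could instead reduce to a core of smoother functions using the core property established in Section~\ref{sec:candel-semigroup-construction}.
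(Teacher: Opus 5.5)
Your proposal is correct and follows the paper's proof essentially step by step. It uses the same integration by parts turning \(\int_Z Lf\,d\mu_j\) into \(-2\int_{\wti M}u_j\langle R_j,\nabla f_0\rangle\,dV\), the same Cauchy--Schwarz estimate with the uniform gradient bound from Lemma~\ref{lem:classical-gradient-control} (passing from \(g_A=2g^{\mathrm{leaf}}\) to the leaf metric changes only a constant), weak convergence against the continuous function \(Lf\), and then Lemma~\ref{lem:saturated-support}\textup{(ii)}--\textup{(iii)}, so your fallback via the core property is not needed.
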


\begin{proof}
Use the notation of Section~\ref{sec:limiting-measure}, and choose a
subsequence such that \(\mu_{j_k}\rightharpoonup\mu\).  Fix
\(f\in\mc D_0(A)\), and set
\[
  f_0:=f\circ\iota_{\xi_0},
  \quad
  R_j=\nabla u_j+\frac{m-1}{2}u_j\nabla B_{\xi_0}.
\]
The local isometry
\(\iota_{\xi_0}\colon\wti M\to\mc L_{\xi_0}\) gives
\(f_0\in C^2(\wti M)\) and
\[
  |\nabla f_0(x)|
  =|\nabla^{\mathrm{leaf}}f(\iota_{\xi_0}(x))|
  \quad\text{for every }x\in\wti M.
\]
Lemma~\ref{lem:classical-gradient-control}, together with
\(g_A=2g^{\mathrm{leaf}}\), therefore gives
\[
  \|\nabla f_0\|_{L^\infty(\wti M)}
  \leq\|\nabla^{\mathrm{leaf}}f\|_{L^\infty(Z)}<\infty.
\]

By the pushforward identity \eqref{eq:mu-j-testing} and the lifted formula
\eqref{eq:leafwise-L-lift},
\[
  \int_ZLf\,d\mu_j
  =\int_{\wti M}u_j^2
   \bigl(\Delta f_0
   -(m-1)\langle\nabla B_{\xi_0},\nabla f_0\rangle\bigr)\,dV.
\]
Although \(f_0\) need not have compact support, \(u_j\) does, so integration
by parts gives
\[
  \int_{\wti M}u_j^2\Delta f_0\,dV
  =-2\int_{\wti M}u_j
       \langle\nabla u_j,\nabla f_0\rangle\,dV.
\]
Substitution yields
\begin{equation}\label{eq:first-defect-pairing}
  \int_ZLf\,d\mu_j
  =-2\int_{\wti M}u_j\langle R_j,\nabla f_0\rangle\,dV.
\end{equation}
By Cauchy--Schwarz,
\begin{align*}
  \left|\int_ZLf\,d\mu_j\right|
  &\leq2\|u_j\|_{L^2(\wti M)}
        \|R_j\|_{L^2(T\wti M)}
        \|\nabla f_0\|_{L^\infty(\wti M)}\\
  &\leq2\|\nabla^{\mathrm{leaf}}f\|_{L^\infty(Z)}
        \|R_j\|_{L^2(T\wti M)}
  \longrightarrow0.
\end{align*}
The convergence follows from \eqref{eq:first-defect}.  Since
\(Lf\in C(Z)\), weak convergence gives
\[
  \int_ZLf\,d\mu
  =\lim_{k\to\infty}\int_ZLf\,d\mu_{j_k}=0.
\]
Thus \eqref{eq:mu-infinitesimal-stationarity} holds.  By
Lemma~\ref{lem:saturated-support}\textup{(ii)}--\textup{(iii)},
\(\mu\) is stationary and \(\supp\mu\) is leaf-saturated.
\end{proof}

\subsection{From saturated support to Busemann equality}

Propositions~\ref{prop:defect-support} and
\ref{prop:stationarity-defect} show that \(\supp\mu\) lies in
\(D^{-1}(0)\) and is leaf-saturated.  Choosing a leaf contained in this
support therefore gives a single boundary point for which the Busemann
defect vanishes at every base point.

\begin{proposition}
\label{prop:equality-busemann}
Let \((M^m,g)\) be a closed connected Riemannian manifold with \(m\geq2\)
and \(\sec_g\leq-1\), and let \((\wti M,\wti g)\) be its universal cover.
Assume
\[
  \lambda_1(\wti M)=\frac{(m-1)^2}{4}.
\]
Then there exists \(\xi_*\in\partial_\infty\wti M\) such that
\(B_{\xi_*}\in C^2(\wti M)\) and
\[
  |\nabla B_{\xi_*}(x)|=1,
  \quad
  \Delta B_{\xi_*}(x)=m-1
\]
for every \(x\in\wti M\).
\end{proposition}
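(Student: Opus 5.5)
The plan is to assemble the results of Sections~\ref{sec:limiting-measure} and~\ref{sec:stationarity}. Since \(M\) is compact, \(\wti M\) satisfies \eqref{eq:pinched-curvature} for some \(A\geq1\). Proposition~\ref{prop:busemann-comparison}\textup{(i)} therefore gives, for \emph{every} \(\xi\in\partial_\infty\wti M\), that \(B_\xi\in C^2(\wti M)\) and \(|\nabla B_\xi|=1\). Only the Laplacian identity remains.

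To obtain it, assume \(\lambda_1(\wti M)=(m-1)^2/4\) and fix some \(\xi_0\). Take a normalized minimizing sequence \(u_j\) from Proposition~\ref{prop:defects}, and form the measures \(\mu_j\) of \eqref{eq:mu-j}. By Proposition~\ref{prop:defect-support}, a subsequence converges weakly to a probability measure \(\mu\) with \(\supp\mu\subseteq D^{-1}(0)\). Because \(\mu\) has total mass one, its support is nonempty, so we may pick \(z\in\supp\mu\). By Proposition~\ref{prop:stationarity-defect}, \(\supp\mu\) is leaf-saturated, so \(\mc L(z)\subset\supp\mu\subseteq D^{-1}(0)\).

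Next, write \(z=[x_0,\xi_*]\) for some representative. By Lemma~\ref{lem:suspension-foliation}, the leaf through \(z\) is \(\mc L_{\xi_*}=\{[x,\xi_*]:x\in\wti M\}\). For every \(x\in\wti M\), the point \([x,\xi_*]\) lies in \(D^{-1}(0)\). By the definition \eqref{eq:busemann-defect}, which is well defined on classes by \(\Gamma\)-invariance of \(\widetilde D\), this says exactly
\[
  \Delta B_{\xi_*}(x)-(m-1)=D([x,\xi_*])=0 .
\]
Hence \(\Delta B_{\xi_*}\equiv m-1\) on \(\wti M\), as required.

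Given the earlier results, no step here is a real obstacle. The substantive inputs are leaf saturation, which rests on heat-kernel positivity in Lemma~\ref{lem:saturated-support}\textup{(iii)}, and the use of the \emph{whole} support rather than only \(\mu\)-almost every point. The one point to check is that \(\xi_*\) is any representative of the leaf through \(z\) and need not equal \(\xi_0\). The defect is constant on \(\Gamma\)-orbits, so the choice of representative does not matter.
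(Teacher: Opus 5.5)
Your proposal is correct and follows the paper's proof step for step. The \(C^2\) regularity and unit gradient come from Proposition~\ref{prop:busemann-comparison}, the limit \(\mu\) satisfies \(\supp\mu\subseteq D^{-1}(0)\) with leaf-saturated support (Propositions~\ref{prop:defect-support} and~\ref{prop:stationarity-defect}), and any \(z_*=[x_*,\xi_*]\in\supp\mu\) then gives \(D([x,\xi_*])=0\) for all \(x\in\wti M\); your remark that \(\xi_*\) need not equal \(\xi_0\), and that the representative is irrelevant by \(\Gamma\)-invariance of \(\widetilde D\), is a correct detail left implicit in the paper.
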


\begin{proof}
Since \(M\) is compact, there is a constant \(A\geq1\) such that
\[
  -A^2\leq\sec_g\leq-1.
\]
The same bounds hold on \((\wti M,\wti g)\), so
Proposition~\ref{prop:busemann-comparison} shows that every Busemann
function \(B_\xi\) is of class \(C^2\) and satisfies
\(|\nabla B_\xi|=1\).  We seek a boundary point for which the Laplacian
equality holds throughout \(\wti M\).

Proposition~\ref{prop:defect-support} gives a probability measure
\(\mu\), and Proposition~\ref{prop:stationarity-defect} applies to this
measure.  Together they give
\[
  \supp\mu\subseteq D^{-1}(0),
  \quad
  \supp\mu\text{ is leaf-saturated}.
\]

Since \(\mu\) is a probability measure, \(\supp\mu\neq\varnothing\).
Choose \(z_*=[x_*,\xi_*]\in\supp\mu\).  By the definition of the leaf
through \(z_*\) and leaf saturation, for every \(x\in\wti M\),
\[
  [x,\xi_*]\in\mc L(z_*)
  \subseteq\supp\mu
  \subseteq D^{-1}(0).
\]
Therefore, by \eqref{eq:busemann-defect},
\[
  0=D([x,\xi_*])
   =\Delta B_{\xi_*}(x)-(m-1)
  \quad\text{for every }x\in\wti M.
\]
Hence \(\Delta B_{\xi_*}=m-1\) throughout \(\wti M\).  The regularity and
unit-gradient identity were established above.
\end{proof}

It remains to convert this Busemann equality into a global geometric
description of \(\wti M\).

\section{From Busemann equality to hyperbolic rigidity}
\label{sec:rigidity}

In this section we pass from the Busemann equality obtained in
Proposition~\ref{prop:equality-busemann} to hyperbolic rigidity and complete
the proof of Theorem~\ref{thm:main}.  The next lemma contains the
Riemannian part of the argument: the upper curvature bound gives the
Busemann Hessian identity, integrating the gradient flow gives a
warped-product splitting, and the lower curvature bound forces the
reference horosphere to be flat.  We then apply the lemma to the universal
cover of \(M\).  The final subsection explains the two distinct places
where cocompactness is used.

\subsection{Riemannian rigidity and completion of the proof}

\begin{lemma}
\label{lem:riemannian-rigidity}
Let \((X^m,g_X)\) be a complete simply connected Riemannian manifold with
\(m\geq2\), and suppose that, for some \(A\geq1\),
\[
  -A^2\leq\sec_{g_X}\leq-1.
\]
If a Busemann function \(B\in C^2(X)\) satisfies
\[
  |\nabla B|=1,
  \quad
  \Delta B=m-1,
\]
then \(B\) is smooth,
\begin{equation}\label{eq:hessian-equality}
  \nabla^2B=g_X-dB\otimes dB.
\end{equation}
Moreover, \(H=B^{-1}(0)\) is complete, simply connected, and flat, and
\begin{equation}\label{eq:warped-product}
  (X,g_X)
  \cong
  \bigl(\R\times H,dt^2+\ee^{2t}g_H\bigr).
\end{equation}
Consequently, \((X,g_X)\) is isometric to
\(\mathbb H^m(-1)\).
\end{lemma}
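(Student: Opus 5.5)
Our plan proceeds in four steps: the Hessian identity, smoothness and the warped-product splitting, flatness of the horosphere, and identification with \(\bH^m(-1)\).

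\emph{Step 1: the Hessian identity.} By Proposition~\ref{prop:busemann-comparison}\textup{(ii)}, the symmetric tensor \(S:=\nabla^2B-(g_X-dB\otimes dB)\) is positive semidefinite. Its trace is \(\Delta B-(m-1)=0\). A positive semidefinite symmetric form with zero trace vanishes, so \eqref{eq:hessian-equality} holds pointwise.

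\emph{Step 2: smoothness and the splitting.} Since \(|\nabla B|=1\), the integral curves of \(\nabla B\) are unit-speed geodesics; indeed \(\nabla_{\nabla B}\nabla B=\nabla^2B(\nabla B,\cdot)^\sharp=0\) by Step 1. The flow \(\phi_t\) of \(\nabla B\) is complete because \(\nabla B\) has unit length and \(X\) is complete. The flow satisfies \(B(\phi_t(x))=B(x)+t\). Hence the map \(\Psi\colon\R\times H\to X\), \(\Psi(t,y)=\phi_t(y)\), is a bijection, and \(H=B^{-1}(0)\) is an embedded \(C^1\) hypersurface that is a deformation retract of \(X\). Consequently \(H\) is connected and simply connected, being homotopy equivalent to \(X\).

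For a vector \(w\) tangent to a level set, Step 1 gives \(\nabla_w\nabla B=w\). Along the flow, \(\mathcal L_{\nabla B}g=2\nabla^2B=2(g-dB\otimes dB)\). Thus the pulled-back metric \(g_t\) on \(H\) satisfies \(\partial_tg_t=2g_t\), so \(g_t=\ee^{2t}g_H\), and \(\Psi^*g_X=dt^2+\ee^{2t}g_H\). This proves \eqref{eq:warped-product}, at least at \(C^1\) regularity.

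For smoothness, we take the trace of the Bochner/Riccati identity, or more directly use the warped form: the level sets are horospheres, and \(B\) is the distance function to them. A cleaner route is to note that \(\Delta B=m-1\) is constant, so \(B\) solves an elliptic equation with smooth coefficients; elliptic regularity applied to \(B\in C^2\) then gives \(B\in C^\infty\). Once \(B\) is smooth, \(H\) and \(g_H\) are smooth. Completeness of \(H\): the metric \(g_H\) restricted to \(t=0\) satisfies \(d_H\geq d_X\), and a \(g_H\)-Cauchy sequence converges in \(X\) to a limit that lies in the closed set \(H\); the limit is then reached along \(H\) because \(\Psi\) is a homeomorphism and the metric is locally comparable.

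\emph{Step 3: flatness of \(H\).} This is the main obstacle, and it is where the lower curvature bound enters. The curvature formulas for warped products \(dt^2+f(t)^2g_H\) with \(f=\ee^t\) give, for a tangential plane \(\sigma\),
\[
  \sec_X(\sigma)=\frac{\sec_H(\sigma)-f'^2}{f^2}=\ee^{-2t}\sec_H(\sigma)-1,
\]
while mixed planes have curvature \(-f''/f=-1\). The bounds \(-A^2\leq\sec_X\leq-1\) give
\[
  -(A^2-1)\ee^{2t}\leq\sec_H(\sigma)\leq0
\]
for every \(t\in\R\). Letting \(t\to-\infty\) forces \(\sec_H\geq0\), and therefore \(\sec_H\equiv0\).

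\emph{Step 4: identification.} Since \(H\) is complete, simply connected, and flat, it is isometric to \(\R^{m-1}\). Hence \(X\cong(\R\times\R^{m-1},dt^2+\ee^{2t}g_{\mathrm{euc}})\), which is the horospherical model of \(\bH^m(-1)\).
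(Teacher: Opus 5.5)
Your proposal is correct and follows the paper's proof essentially step for step: the zero-trace argument for the Hessian identity, the gradient-flow splitting with $g_t=\ee^{2t}g_H$, and letting $t\to-\infty$ in the tangential curvature formula $\ee^{-2t}\sec_{g_H}(\sigma)-1$ to force flatness. The only real deviation is that you get smoothness from elliptic regularity for $\Delta B=m-1$ rather than from the paper's bootstrap of the coordinate form of the Hessian identity; both routes are valid, and for $m=2$ you should add that $H$ is one-dimensional and hence trivially flat.
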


\begin{proof}
The lower Hessian comparison inequality
\eqref{eq:busemann-laplacian-comparison} shows that
\[
  Q:=\nabla^2B-(g_X-dB\otimes dB)
\]
is positive semidefinite.  Its trace is
\[
  \tr_{g_X}Q
  =\Delta B-\bigl(m-|\nabla B|^2\bigr)
  =(m-1)-(m-1)=0.
\]
A positive semidefinite symmetric tensor with zero trace vanishes, proving
\eqref{eq:hessian-equality}.

The tensor identity also shows that \(B\) is smooth.  Indeed, in local
coordinates,
\[
  \partial_i\partial_jB
  =\Gamma_{ij}^{k}\partial_kB+(g_X)_{ij}
   -(\partial_iB)(\partial_jB).
\]
Starting from \(B\in C^2\), this equation and induction give
\(B\in C^\infty(X)\).  If \(T=\nabla B\), then
\eqref{eq:hessian-equality} is equivalent to
\begin{equation}\label{eq:nabla-T}
  \nabla_YT=Y-\langle Y,T\rangle T
\end{equation}
for every vector field \(Y\).  In particular,
\begin{equation}\label{eq:T-geodesic}
  \nabla_TT=0,
  \quad
  \nabla_YT=Y\quad\text{when }Y\perp T.
\end{equation}
Thus the integral curves of \(T\) are unit-speed geodesics, while the level
sets of \(B\) are totally umbilical with second fundamental form equal to
their induced metric.

Set \(H=B^{-1}(0)\), with its induced metric \(g_H\).  We next integrate
the vector field \(T=\nabla B\).
Since \(|T|=1\) and \(X\) is complete, the flow \(\Phi_t\) of
\(T=\nabla B\) is defined for every \(t\in\R\).  Along each flow line,
\begin{equation}\label{eq:B-along-flow}
  \frac{d}{dt}B(\Phi_t(x))
  =\langle\nabla B,T\rangle=1,
\end{equation}
and hence \(B(\Phi_t(x))=B(x)+t\).  It follows that
\[
  \Phi\colon\R\times H\longrightarrow X,
  \quad
  \Phi(t,y)=\Phi_t(y),
\]
is a diffeomorphism with inverse
\[
  x\longmapsto\bigl(B(x),\Phi_{-B(x)}(x)\bigr).
\]
Since \(X\) is connected, \(H\) is connected.

Let \(g_t\) be the metric on \(H\) obtained by pulling back the induced
metric on \(B^{-1}(t)\) by \(\Phi_t\).  Extend tangent vector fields
\(Y,Z\) on \(H\) by the flow, so that \([T,Y]=[T,Z]=0\).  Using
\eqref{eq:T-geodesic},
\begin{align*}
  \frac{d}{dt}g_t(Y,Z)
  &=\langle\nabla_TY,Z\rangle+\langle Y,\nabla_TZ\rangle \\
  &=\langle\nabla_YT,Z\rangle+\langle Y,\nabla_ZT\rangle \\
  &=2g_t(Y,Z).
\end{align*}
Therefore \(g_t=\ee^{2t}g_H\).  Since \(T\) is unit and orthogonal to the
level sets, this proves \eqref{eq:warped-product}.

The level set \(H\) is closed in \(X\).  An intrinsic Cauchy sequence in
\(H\) is therefore an ambient Cauchy sequence; completeness of \(X\) gives
an ambient limit in \(H\), and local equivalence of the intrinsic and
ambient distances gives intrinsic convergence.  Thus \(H\) is complete.
Finally, \(X\cong\R\times H\) makes \(H\) a deformation retract of \(X\),
which proves the assertion about simple connectivity.

The sectional curvatures of \eqref{eq:warped-product} are as follows.
If \(Y\) is tangent to \(H\), then
\begin{equation}\label{eq:mixed-curvature}
  \sec_{g_X}(\partial_t,Y)=-1.
\end{equation}
If \(\sigma\subset T_yH\) is a two-plane and \(\sigma_t\) is the
corresponding plane tangent to \(\{t\}\times H\), then
\begin{equation}\label{eq:tangential-curvature}
  \sec_{g_X}(\sigma_t)
  =\ee^{-2t}\sec_{g_H}(\sigma)-1.
\end{equation}
These are the standard warped-product formulas for the warping function
\(\ee^t\).

It remains to use the lower curvature bound.  The splitting alone would
allow \(g_H\) to be nonflat and nonpositively curved.
If \(m=2\), then \(H\) is one-dimensional and hence flat.  Suppose
\(m\geq3\), and let \(\sigma\subset T_yH\) be a two-plane.
The tangential curvature formula \eqref{eq:tangential-curvature} and the
upper curvature bound imply
\[
  \ee^{-2t}\sec_{g_H}(\sigma)-1\leq-1,
\]
and hence \(\sec_{g_H}(\sigma)\leq0\).  If
\(\sec_{g_H}(\sigma)=-\kappa<0\), then
\[
  \sec_{g_X}(\sigma_t)=-\kappa\ee^{-2t}-1
  \longrightarrow-\infty
  \quad\text{as }t\longrightarrow-\infty.
\]
This contradicts the lower curvature bound.
Therefore \(\sec_{g_H}(\sigma)=0\) for every \(\sigma\), so \(g_H\) is flat.

Since \(H\) is complete, simply connected, and flat,
\((H,g_H)\cong(\R^{m-1},g_{\mr{Euc}})\).  With \(r=\ee^{-t}\),
\[
  dt^2+\ee^{2t}g_{\mr{Euc}}
  =\frac{dr^2+g_{\mr{Euc}}}{r^2},
\]
which is the upper-half-space metric on \(\mathbb H^m(-1)\).
\end{proof}

\begin{proof}[Proof of Theorem~\ref{thm:main}]
McKean's estimate gives the asserted lower bound.  Assume equality.
Since \(M\) is closed, the sectional curvature of \(g\) is bounded below;
thus, for some \(A\geq1\),
\[
  -A^2\leq\sec_{\wti g}\leq-1.
\]
Proposition~\ref{prop:equality-busemann} gives a Busemann function
\(B\in C^2(\wti M)\) satisfying
\[
  |\nabla B|=1,
  \quad
  \Delta B=m-1.
\]
Lemma~\ref{lem:riemannian-rigidity} therefore yields
\(\wti M\cong\bH^m(-1)\).

Conversely, if \(\wti M\cong\bH^m(-1)\), then
\[
  \lambda_1(\wti M)=\frac{(m-1)^2}{4}.
\]
The upper bound follows by applying radial cutoffs to
\(\exp(-(m-1)r/2)\), and the reverse inequality is McKean's estimate.
The covering projection \(\wti M\to M\) is a local isometry.  Thus
\(\wti M\cong\bH^m(-1)\) is equivalent to \(g\) having constant sectional
curvature \(-1\).
\end{proof}

\subsection{The role of cocompactness}
\label{subsec:role-cocompactness}

Closedness has two concrete consequences in the proof of
Theorem~\ref{thm:main}.  It yields a uniform lower sectional-curvature
bound, which is the hypothesis that forces the reference horosphere to be
flat in Lemma~\ref{lem:riemannian-rigidity}.  It also makes the horospherical
suspension \(Z\cong SM\) compact, so the probability measures associated
with a minimizing sequence cannot lose mass at infinity.  The two
constructions below show that these are separate issues.

\medskip
\noindent\emph{Failure without a lower curvature bound.}
For \(m\geq3\), let \((H^{m-1},g_H)\) be complete, simply connected,
nonflat, and satisfy \(\sec_{g_H}\leq0\).  The warped product
\[
  (X^m,g_X)
  =\bigl(\R\times H,dt^2+\ee^{2t}g_H\bigr)
\]
is complete and simply connected.  It has mixed sectional curvatures
\(-1\), while for a two-plane
\(\sigma\subset T_yH\),
\[
  \sec_{g_X}(\sigma_t)
  =\ee^{-2t}\sec_{g_H}(\sigma)-1.
\]
Thus \(\sec_{g_X}\leq-1\), but \(X\) is not hyperbolic.  Nevertheless,
\[
  \lambda_1(X)=\frac{(m-1)^2}{4}.
\]
McKean's estimate gives the lower bound.  For the reverse inequality, fix
\(0\neq\varphi\in C_c^\infty(H)\), put \(a=(m-1)/2\), and choose
\(\chi_R\in C_c^\infty(\mathbb R)\) supported in \((0,R+1)\), equal to one
on \([1,R]\), and with derivative bounded independently of \(R\).  Since
\(dV_X=\ee^{(m-1)t}dt\,dV_H\), the functions
\[
  u_R(t,y)=\chi_R(t)\ee^{-at}\varphi(y)
\]
satisfy
\[
  \frac{\int_X|\nabla u_R|^2\,dV_X}
       {\int_Xu_R^2\,dV_X}
  =a^2+O(R^{-1}).
\]
Indeed, the denominator is comparable to \(R\), while the cutoff terms
and the \(H\)-gradient term remain bounded.  Hence the Rayleigh quotients
converge to \(a^2\).  Negative curvature in \(g_H\) is amplified as
\(t\to-\infty\), so \(\sec_{g_X}\) has no uniform lower bound.  This is
precisely the obstruction excluded by the two-sided curvature hypothesis
in Lemma~\ref{lem:riemannian-rigidity}.

\medskip
\noindent\emph{Failure of tightness at finite volume.}
The lower curvature bound may still hold when the suspension is no longer
compact.  The next proposition is a spectral criterion on the
universal cover; the example following it constructs a finite-volume
quotient satisfying the criterion.

\begin{proposition}
\label{prop:hyperbolic-horoball-criterion}
Let \((X^m,g_X)\), \(m\geq2\), be complete and simply connected, and
suppose that
\[
  \sec_{g_X}\leq-1.
\]
If \(X\) contains an open subset isometric to a horoball in
\(\mathbb H^m(-1)\), then
\begin{equation}\label{eq:cusp-spectrum}
  \lambda_1(X)=\frac{(m-1)^2}{4}.
\end{equation}
\end{proposition}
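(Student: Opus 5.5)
The lower bound \(\lambda_1(X)\geq(m-1)^2/4\) is McKean's estimate \eqref{eq:mckean-bound}, which needs only \(\sec_{g_X}\leq-1\) and simple connectivity. So the task is the reverse inequality. I will show that the Rayleigh quotient can be made arbitrarily close to \((m-1)^2/4\) using test functions supported inside the hyperbolic horoball \(U\subset X\). Since \(U\) is open and \(C_c^\infty(U)\subset C_c^\infty(X)\), and the Rayleigh quotient of such a function is computed with the hyperbolic metric alone, we get \(\lambda_1(X)\leq\lambda_1^{D}(U)\). Here \(\lambda_1^D(U)\) is the Dirichlet bottom spectrum of the horoball as a subset of \(\bH^m(-1)\). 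It therefore suffices to prove \(\lambda_1^D(\mathcal B)\leq(m-1)^2/4\) for a horoball \(\mathcal B\) in hyperbolic space.

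Use upper half-space coordinates \((r,y)\in(0,\infty)\times\R^{m-1}\) with metric \((dr^2+|dy|^2)/r^2\), and take \(\mathcal B=\{r>1\}\). Alternatively, use \(t=-\log r\), so that the metric is \(dt^2+\ee^{2t}g_{\mr{Euc}}\) as in the proof of Lemma~\ref{lem:riemannian-rigidity}, and the horoball becomes \(\{t<0\}\). There the horospheres shrink as \(t\to-\infty\), and \(dV=\ee^{(m-1)t}\,dt\,dy\). This is the same warped product as in the failure example above, except that the admissible \(t\)-range is now the half-line \(t<0\) instead of \(\R\).

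Following that example, set \(a=(m-1)/2\) and
\[
  u_R(t,y)=\chi_R(-t)\,\ee^{-at}\varphi(y),
\]
with \(0\neq\varphi\in C_c^\infty(\R^{m-1})\). Here \(\chi_R\) is supported in \((0,R+1)\), equals \(1\) on \([1,R]\), and has \(|\chi_R'|\) bounded independently of \(R\). Then \(u_R\) is compactly supported in \(\{t<0\}\), and \(u_R^2\,dV=\chi_R^2\varphi^2\,dt\,dy\). Hence the denominator is \(\asymp R\). The gradient splits as
\[
  |\nabla u_R|^2=|\partial_tu_R|^2+\ee^{-2t}|\nabla_y u_R|^2_{\mr{Euc}}.
\]
The \(\partial_t\) term gives \(a^2R\|\varphi\|_2^2+O(1)\), with the error coming only from the cutoff transition regions. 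The tangential term, however, is \(\int\chi_R^2\ee^{-2t}|\nabla\varphi|^2\,dt\,dy\). On \(t\in(-R-1,0)\) this grows like \(\ee^{2R}\), so the test function fails as written.

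This tangential blow-up is the main obstacle. The horoball \(\{t<0\}\) is exactly the region where horospheres shrink, so translating \(\varphi\) along fixed \(y\) is too costly. I would fix it by letting the \(y\)-profile scale with the horosphere: use \(\varphi(\ee^{t}y)\) in place of \(\varphi(y)\). Equivalently, work in the Euclidean coordinate \(r\) with functions \(r^{a}\psi(y/r)\chi(\log r)\), which are constant along geodesics issuing from the point at infinity \(y=\infty\)... A cleaner choice is to exploit the homogeneity of \(\bH^m\) directly. The horoball \(\{r>1\}\) contains hyperbolic balls of every radius, for instance \(B((0,e^{\rho+1}),\rho)\), which lies in \(\{r>1\}\) by the distance formula \(\log(r_2/r_1)\leq d\). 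Consequently
\[
  \lambda_1^D(\mathcal B)\leq\inf_\rho\lambda_1^D\bigl(B(\rho)\bigr)=\lambda_1(\bH^m)=\frac{(m-1)^2}{4}.
\]
Here \(\lambda_1^D(B(\rho))\downarrow\lambda_1(\bH^m)\) by exhaustion, and \(\lambda_1(\bH^m)=(m-1)^2/4\) via the radial cutoffs of \(\ee^{-(m-1)r/2}\) already used in the proof of Theorem~\ref{thm:main}. With balls in place of the horospherical test functions the argument closes. The one detail to verify is that a ball of radius \(\rho\) centred at height \(\ee^{\rho+1}\) stays in \(\{r>1\}\); this holds because vertical distance bounds \(|\log r|\) differences.
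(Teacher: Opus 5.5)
Your final argument is the paper's proof: McKean's estimate for the lower bound, then hyperbolic balls of every radius inside the horoball, followed by the exhaustion \(\lambda_1^D(B_\rho)\to\lambda_1(\bH^m(-1))=(m-1)^2/4\). Your check via \(|\log(r_2/r_1)|\leq d\) that \(B((0,\ee^{\rho+1}),\rho)\subset\{r>1\}\) fills in a step the paper only asserts.

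You should delete the horospherical detour, however. Your diagnosis of the \(\ee^{2R}\) tangential blow-up is correct, but the proposed repair \(\varphi(\ee^{t}y)\) with the same weight \(\chi_R(-t)\ee^{-at}\) does not work. The \(L^2\)-mass then concentrates in the deep cutoff region, and the tangential energy stops being negligible, so those quotients stay bounded away from \((m-1)^2/4\).
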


\begin{proof}
McKean's inequality gives
\[
  \lambda_1(X)\geq\frac{(m-1)^2}{4}.
\]
Let \(\mathcal H\subset X\) be an isometrically embedded hyperbolic
horoball.  For every \(R>0\), the horoball contains a relatively compact
domain isometric to the radius-\(R\) ball
\(B_R^{\mathbb H}\subset\mathbb H^m(-1)\).  Transporting compactly
supported test functions from this domain to \(X\) gives
\[
  \lambda_1(X)
  \leq\lambda_1^D(B_R^{\mathbb H}).
\]
The balls \(B_R^{\mathbb H}\) exhaust hyperbolic space, and hence
\[
  \lambda_1^D(B_R^{\mathbb H})
  \longrightarrow
  \lambda_1\bigl(\mathbb H^m(-1)\bigr)
  =\frac{(m-1)^2}{4}.
\]
This proves the reverse inequality and therefore~\eqref{eq:cusp-spectrum}.
\end{proof}

\begin{remark}
\label{rem:cusp-p-spectrum}
The same argument applies for every \(1<p<\infty\).  Replacing McKean's
inequality by its \(p\)-version and using the variational
\(p\)-spectrum of hyperbolic space gives
\[
  \lambda_{1,p}(X)
  =
  \left(\frac{m-1}{p}\right)^p.
\]
Thus this cusp construction is not specific to the linear Laplacian.  The
corresponding defect identity and rigidity theorem are developed in
Appendix~\ref{app:p-rigidity}.
\end{remark}

Proposition~\ref{prop:hyperbolic-horoball-criterion} is only a criterion:
it neither produces a finite-volume quotient nor asserts that \(X\) is
nonhyperbolic.  The following construction provides both missing features
and has a two-sided sectional-curvature bound.

\begin{example}
\label{ex:bounded-curvature-finite-volume}
We construct the promised manifold.  It is complete, noncompact, and of
finite volume, with bounded negative sectional curvature; its universal
cover attains the sharp McKean value but is not hyperbolic.

\smallskip
\noindent\emph{Construction.}
Let \((M_0^m,g_0)\) be a complete noncompact finite-volume hyperbolic
manifold of constant sectional curvature \(-4\).  Choose one cusp and a
horospherical coordinate in which, for \(t\geq T\),
\[
  g_0=dt^2+\ee^{-4t}g_N,
\]
where \((N^{m-1},g_N)\) is compact and flat.  Choose a smooth
nonincreasing function
\[
  q\colon[T,\infty)\longrightarrow[1,2]
\]
that equals \(2\) near \(T\) and equals \(1\) for all sufficiently large
\(t\).  Set
\[
  f(t)
  =\ee^{-2T}
   \exp\left(-\int_T^tq(s)\,ds\right).
\]
Since \(q=2\) near \(T\), one has \(f(t)=\ee^{-2t}\) there.  Hence
\[
  g=dt^2+f(t)^2g_N
\]
fits smoothly with \(g_0\).  Replace \(g_0\) by this metric on the chosen
cusp and leave it unchanged elsewhere.  Denote the resulting manifold by
\((M,g)\).

\smallskip
\noindent\emph{Curvature bounds.}
Since \(g_N\) is flat and
\[
  \frac{f'}{f}=-q,
  \quad
  \frac{f''}{f}=q^2-q',
\]
the warped-product curvature formulas give, for a unit vector \(Y\)
tangent to \(N\) and a two-plane \(\sigma\) tangent to \(N\),
\[
  \sec_g(\partial_t,Y)=q'-q^2,
  \quad
  \sec_g(\sigma)=-q^2.
\]
Because \(1\leq q\leq2\) and \(q'\leq0\), both curvatures are at most
\(-1\).  Moreover, \(q'\) is bounded and
\[
  q'-q^2\geq-\|q'\|_\infty-4,
  \quad
  -q^2\geq-4,
\]
while outside the transition region the curvature is either \(-4\) or
\(-1\).  Therefore, with \(K^2=4+\|q'\|_\infty\),
\[
  -K^2\leq\sec_g\leq-1
  \quad\text{on \(M\)}.
\]

\smallskip
\noindent\emph{Finite volume and sharp spectrum.}
For all sufficiently large \(t\), one has \(q(t)=1\), and hence
\[
  f(t)=C\ee^{-t}
\]
for some \(C>0\).  The inclusion of the cusp cross-section
\(N\hookrightarrow M\) is \(\pi_1\)-injective.  Indeed, before the metric
change this is a cross-section of a hyperbolic cusp of \(M_0\), and
\(\pi_1(N)\) is its parabolic cusp subgroup; changing the metric does not
alter this topological inclusion.  Consequently, every component of the
inverse image of the far end in \(X=\widetilde M\) is
\[
  [T_1,\infty)\times\widetilde N
\]
with the lifted metric
\[
  dt^2+C^2\ee^{-2t}\widetilde g_N
\]
for some sufficiently large \(T_1\).  Since \(N\) is flat,
\((\widetilde N,\widetilde g_N)\) is Euclidean; after a linear change of
the horospherical coordinates, the displayed metric is the standard
curvature-\(-1\) metric on a hyperbolic horoball.  Also,
\[
  \vol_g\bigl([T,\infty)\times N\bigr)
  =\vol_{g_N}(N)\int_T^\infty f(t)^{m-1}\,dt
  <\infty.
\]
The metric is complete because the \(t\)-direction has infinite length.
Consequently, \((M,g)\) is complete and has finite volume.

Proposition~\ref{prop:hyperbolic-horoball-criterion} gives
\[
  \lambda_1(X)=\frac{(m-1)^2}{4}.
\]
On the other hand, \(g=g_0\) on the compact core, where the sectional
curvature is \(-4\).  Hence
\[
  X\not\cong\mathbb H^m(-1).
\]
This is the asserted bounded-curvature finite-volume failure of rigidity.
\end{example}

The quotient in Example~\ref{ex:bounded-curvature-finite-volume} has
uniformly bounded sectional curvature but not bounded geometry in the
standard sense.  Indeed, the lengths of nontrivial loops in the
horospherical cross-sections of the exact cusp decay exponentially, so
\(\operatorname{inj}_g\to0\) along the cusp.  Its universal cover \(X\),
on the other hand, has infinite injectivity radius and uniformly bounded
sectional curvature.

The minimizing functions used in
Proposition~\ref{prop:hyperbolic-horoball-criterion} may be supported in
balls moving arbitrarily far into the lifted cusp.  Their projections to
\(M\), and hence the associated probability measures on
\(Z\cong SM\), escape every compact subset.  Thus the defect identities
from Section~\ref{sec:busemann-defects} still hold, but the compactness step
in Proposition~\ref{prop:defect-support} no longer produces a limiting
probability measure.  This is exactly the compactness obstruction
identified at the beginning of the subsection.

Together, the warped-product construction and
Example~\ref{ex:bounded-curvature-finite-volume} identify the two losses
caused by removing closedness.  The first shows that an upper curvature
bound alone does not force the reference horosphere to be flat.  The
second shows that even finite volume and a two-sided curvature bound do
not prevent minimizing mass from escaping.  These examples do not exclude
rigidity under additional noncompact hypotheses: a suitable tightness
condition could replace compactness of the suspension, provided that the
lower curvature control needed in the final warped-product argument
continues to hold.

\begin{problem}
\label{prob:finite-volume-entropy-rigidity}
Let \((M^m,g)\) be a complete noncompact Riemannian manifold of finite
volume, and let \(X=\widetilde M\) be its universal Riemannian cover.
Suppose that
\[
  \Ric_X\geq-(m-1)g_X
\]
and
\[
  h_{\mathrm{vol}}(X)=m-1.
\]
Is \(X\) necessarily isometric to \(\mathbb H^m(-1)\)?
\end{problem}

This asks whether the volume-entropy rigidity theorem of
Ledrappier--X.~Wang, for which Gang Liu gave a short proof in the compact
case, extends to complete finite-volume quotients (see
\cite{LedrappierWang2010}, Theorem~2, and
\cite{Liu2011}, Theorem~1).  With bounded curvature and the additional
identity
\[
  \delta_{\pi_1(M)}=h_{\mathrm{vol}}(X)=m-1,
\]
the theorem of Barthelm\'e--Marquis--Zimmer gives
\(X\cong\mathbb H^m(-1)\) (see
\cite{BarthelmeMarquisZimmer2018}, Theorem~A).  Here
\(\delta_{\pi_1(M)}\) is the Poincar\'e exponent.  Its equality with
volume entropy is automatic for cocompact actions but can fail for
finite-volume nonuniform actions, even in pinched negative curvature
(see \cite{DalboPeignePicaudSambusetti2009}, Theorem~1.1).  To the best of our
knowledge, the problem remains open.  It is distinct from the spectral
equality studied here: an exact hyperbolic cusp can realize McKean's
spectral value without forcing the maximal volume-growth condition in the
problem.

\section{Candel's leafwise semigroup construction}
\label{sec:candel-semigroup-construction}

We construct the leafwise diffusion needed for rigidity in a form valid
on general compact foliated spaces with possibly noncompact, nonproper
leaves.  Starting from the maximal continuous realization of a uniformly
elliptic leafwise Laplace operator, we obtain a conservative Feller
semigroup and identify it with the intrinsic minimal heat semigroup on
each leaf by a duplicated-leaf argument.

We also characterize stationary measures infinitesimally and prove that
their supports are leaf-saturated.

\subsection{Preparation for Candel's theorem}
\label{subsec:candel-preparation}

We use the \(C^{r,0}\) and \(C^{k,0}\) terminology fixed in
Subsection~\ref{subsec:suspension-foliated-space} and
Definition~\ref{def:Ck0}.  Let \((X,\mc F)\) be a compact metrizable
\(C^{r,0}\) foliated space of leaf dimension \(d\), where \(r\geq3\),
with locally compact metrizable transversals.  Compactness allows us to
fix a finite foliated atlas.

Consider a leafwise differential operator whose expression in a
foliated chart is
\begin{equation}\label{eq:general-leafwise-operator}
  \ms A f
  =a^{ij}(y,\tau)\partial_i\partial_jf
   +b^j(y,\tau)\partial_jf,
\end{equation}
where
\[
  a^{ij}=a^{ji},\quad
  (a^{ij})>0,\quad
  a^{ij}\in C^{2,0},\quad
  b^j\in C^{1,0}.
\]
Here and below, repeated indices are summed.  In Candel's terminology,
\(\ms A\) is a \emph{Laplace operator} on \((X,\mc F)\).  The
first-order term is allowed, but there is no zeroth-order term; in
particular, \(\ms A1=0\).  Our sign convention gives
\(\ms Af\leq0\) at every leafwise local maximum of \(f\).

The inverse matrix \((g_{ij})=(a^{ij})^{-1}\) defines the principal
leafwise metric
\[
  g_{\ms A}=g_{ij}\,dy^i\,dy^j.
\]
The compatibility of the principal symbols on chart overlaps makes this
an intrinsic metric on \(T\mc F\).  Compactness of \(X\) and a finite
foliated atlas give uniform ellipticity and uniform local coefficient
bounds.  If
\[
  \Delta_{g_{\ms A}}
  =
  a^{ij}\partial_i\partial_j+c^j\partial_j,
\]
then
\begin{equation}\label{eq:A-metric-drift-decomposition}
  \ms A=\Delta_{g_{\ms A}}+W,
  \quad
  W=(b^j-c^j)\partial_j.
\end{equation}
The vector field \(W\) has uniformly bounded leafwise length.  We also
use the fact that every leaf is complete for \(g_{\ms A}\).
To see this, suppose that a unit-speed leafwise geodesic
\(\gamma\colon[0,T)\to X\) has finite maximal time \(T\).  Compactness
gives a sequence \(t_j\uparrow T\) for which
\(\gamma(t_j)\to p\in X\).  Choose a flow box \(U\) about \(p\) and a
smaller flow box \(U'\) with \(p\in U'\) and
\(\overline{U'}\subset U\).  Uniform comparison of \(g_{\ms A}\) with
the Euclidean plaque metrics gives a number \(\delta>0\) such that
every plaquewise curve joining \(U'\) to the plaque boundary of \(U\)
has length at least \(\delta\).  For large \(j\),
\[
  \gamma(t_j)\in U',
  \quad
  T-t_j<\delta.
\]
The remaining segment of \(\gamma\) therefore stays in \(U\), and,
being leafwise, it remains in one plaque of \(U\).  Its plaque
coordinates converge as \(t\uparrow T\), and the geodesic equation
extends \(\gamma\) past \(T\), a contradiction.  Thus every leaf is
complete.  The uniform \(C^{2,0}\) bounds on the principal metric also
give a uniform lower bound on the leafwise Ricci curvature.  These
geometric consequences of compactness will enter only when
conservation of the leafwise heat-kernel mass is proved in
Subsection~\ref{subsec:candel-nonproper-heat}.

The classical continuous realization of \(\ms A\) has domain
\begin{equation}\label{eq:classical-domain}
  \mc D_0(\ms A)
  :=
  \left\{
  f\in C(X):
  f|_L\in C^2(L)\text{ for every leaf }L,\quad
  \ms Af\in C(X)
  \right\}.
\end{equation}

\begin{lemma}\label{lem:C20-density}
The space \(C^{2,0}(X)\) is uniformly dense in \(C(X)\).  In particular,
\(\mc D_0(\ms A)\) is dense in \(C(X)\).
\end{lemma}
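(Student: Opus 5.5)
The plan is to prove density of \(C^{2,0}(X)\) by a Stone--Weierstrass argument on the compact metrizable space \(X\). Then I will deduce the density of \(\mc D_0(\ms A)\) from the inclusion \(C^{2,0}(X)\subset\mc D_0(\ms A)\).

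\emph{The algebra.} First I check that \(C^{2,0}(X)\) is a subalgebra of \(C(X)\) containing the constants. This holds because, in each foliated chart, leafwise derivatives of products up to order two are given by the Leibniz rule, and these expressions are jointly continuous whenever the factors' derivatives are. The definition is chart-independent because the coordinate changes \((x,\tau)\mapsto(h_{\beta\alpha}(x,\tau),\theta_{\beta\alpha}(\tau))\) have \(x\)-derivatives through order \(r\geq3\) that are jointly continuous, so the chain rule preserves the class \(C^{2,0}\).

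\emph{Point separation.} Given distinct \(p\neq q\) in \(X\), I separate them by a product of a bump in the leaf variable and a continuous function of the transverse variable. Choose a chart \(\psi\colon U\to B\times T\) with \(p\in U\), and write \(\psi(p)=(y_0,\tau_0)\). Take \(\phi\in C_c^\infty(B)\) with \(\phi=1\) near \(y_0\). Since \(T\) is locally compact metrizable, Urysohn's lemma gives \(\chi\in C_c(T)\) with \(\chi(\tau_0)=1\), supported in a small compact neighborhood. Set \(f=(\phi\otimes\chi)\circ\psi\) on \(U\) and \(f=0\) off \(U\). Then:
\begin{itemize}
\item \(f\) is in \(C^{2,0}\) in the chart \(U\), since its leafwise derivatives \(\partial^I\phi(y)\chi(\tau)\) are jointly continuous;
\item \(f\) is in \(C^{2,0}\) in every other chart, by the chart-change remark above;
\item \(f\) is continuous on \(X\), because \(\psi^{-1}(\supp\phi\times\supp\chi)\) is compact, hence closed in \(X\), and \(f\) vanishes outside it.
\end{itemize}
If \(q\notin U\), then \(f(p)=1\neq0=f(q)\). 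If \(q\in U\), write \(\psi(q)=(y_1,\tau_1)\). When \(\tau_1\neq\tau_0\), shrink the support of \(\chi\) to exclude \(\tau_1\). When \(\tau_1=\tau_0\), so \(y_1\neq y_0\), shrink the support of \(\phi\) to exclude \(y_1\).

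\emph{Conclusion.} Stone--Weierstrass now gives uniform density of \(C^{2,0}(X)\) in \(C(X)\). For the second assertion, let \(f\in C^{2,0}(X)\). On each leaf, \(f\) is \(C^2\) in plaque coordinates, hence \(C^2\) on the leaf manifold. Moreover, \(\ms Af=a^{ij}\partial_i\partial_jf+b^j\partial_jf\) is a sum of products of jointly continuous functions, using \(a^{ij}\in C^{2,0}\) and \(b^j\in C^{1,0}\). So \(\ms Af\) is continuous in each chart, and hence \(\ms Af\in C(X)\). Thus \(C^{2,0}(X)\subset\mc D_0(\ms A)\), and density follows.

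\emph{Main obstacle.} The step needing the most care is the global continuity of the bump function. The chart image \(\psi^{-1}(B\times T)\) need not have compact closure behaving well, so the support of \(\chi\) must be chosen compact in \(T\) to make \(\psi^{-1}(\supp\phi\times\supp\chi)\) compact.
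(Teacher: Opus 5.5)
Your proposal is correct, and it takes a genuinely different route from the paper.

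\textbf{The paper's route.} It approximates a given \(f\in C(X)\) directly. It takes a \(C^{r,0}\) partition of unity \(\{\chi_\alpha\}\) subordinate to finitely many flow boxes, whose existence is cited from Candel. It then mollifies each \(\chi_\alpha f\) only in the plaque variable and uses uniform continuity on compact chart closures to get \(\|f_{\alpha,\varepsilon}-\chi_\alpha f\|_\infty\to0\).

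\textbf{Your route.} You show that \(C^{2,0}(X)\) is a unital subalgebra of \(C(X)\) that separates points, and then invoke Stone--Weierstrass. The only geometric inputs are:
\begin{itemize}
\item the product bump \(\phi(y)\chi(\tau)\);
\item the fact that transition maps of the form \((x,\tau)\mapsto(h(x,\tau),\theta(\tau))\) preserve \(C^{2,0}\) by the chain rule;
\item the fact that \(\psi^{-1}(\supp\phi\times\supp\chi)\) is compact, hence closed, so the zero extension is continuous and is \(C^{2,0}\) in every chart.
\end{itemize}

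\textbf{What each buys.} Your argument is shorter. It avoids both the partition of unity, whose construction itself needs such product bumps plus a normalization step, and the mollifier estimates. The price is that it is non-constructive. The paper's approximants \(\sum_\alpha\rho_\varepsilon*(\chi_\alpha f)\) are tied to \(f\) locally: they preserve nonnegativity and, up to \(\varepsilon\), supports. The same scheme would also give convergence in stronger \(C^{k,0}\) norms when \(f\) is already leafwise regular. None of this is needed here. In particular, the later positivity step in the resolvent construction uses only density together with the shift \(q_j=p_j+\varepsilon_j\), so your version suffices for everything downstream.

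You also check the inclusion \(C^{2,0}(X)\subset\mc D_0(\ms A)\) explicitly, using the continuity of \(a^{ij}\) and \(b^j\). The paper only asserts this inclusion.
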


\begin{proof}
A compact \(C^{r,0}\) foliated space admits \(C^{r,0}\) partitions of unity
subordinate to finite families of flow boxes (see
\cite{Candel2003}, Section~2).  Choose such a partition
\(\{\chi_\alpha\}\) after shrinking the flow boxes so that
\(\supp\chi_\alpha\) is compactly contained in \(U_\alpha\) and has a
uniform positive distance from the boundary in the plaque variable.
Extend
\((\chi_\alpha f)\circ\phi_\alpha^{-1}\) by zero in that variable and
convolve only in the plaque direction:
\[
  f_{\alpha,\varepsilon}(y,\tau)
  =
  \int_{\R^d}\rho_\varepsilon(y-z)
  (\chi_\alpha f)(z,\tau)\,dz.
\]
Every leafwise derivative of \(f_{\alpha,\varepsilon}\) is continuous in
\((y,\tau)\).  Uniform continuity on the compact chart closures gives
\[
  \|f_{\alpha,\varepsilon}-\chi_\alpha f\|_\infty\longrightarrow0.
\]
For sufficiently small \(\varepsilon\), each mollified term extends by
zero to a \(C^{2,0}\) function on \(X\).  The sum of the finitely many
terms converges uniformly to \(f=\sum_\alpha\chi_\alpha f\).  Hence
\(C^{2,0}(X)\) is uniformly dense in \(C(X)\).  Since
\(C^{2,0}(X)\subset\mc D_0(\ms A)\), the classical domain is dense as
well.
\end{proof}

\begin{lemma}
\label{lem:classical-gradient-control}
There is a constant \(C>0\), depending only on the foliated atlas and
\(\ms A\), such that every \(f\in\mc D_0(\ms A)\) has a continuous
leafwise gradient and
\begin{equation}\label{eq:classical-gradient-estimate}
  \sup_X|\nabla^{g_{\ms A}}f|_{g_{\ms A}}
  \leq
  C\bigl(\|f\|_\infty+\|\ms Af\|_\infty\bigr).
\end{equation}
\end{lemma}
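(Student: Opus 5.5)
The estimate is local and uniform, so I would prove it plaque by plaque with interior elliptic estimates whose constants depend only on the uniform coefficient bounds coming from the finite atlas. Shrink the atlas: fix finitely many flow boxes \(U_\alpha\cong B_\alpha\times T_\alpha\), with \(B_\alpha\) a ball of radius \(2\), such that the flow boxes \(U'_\alpha\cong B'_\alpha\times T_\alpha\), with \(B'_\alpha\) the concentric ball of radius \(1\), still cover \(X\). By compactness and the hypotheses \(a^{ij}\in C^{2,0}\) and \(b^j\in C^{1,0}\), there are constants \(\theta\in(0,1]\) and \(K<\infty\) such that, on every plaque, \(\theta|\zeta|^2\le a^{ij}\zeta_i\zeta_j\le\theta^{-1}|\zeta|^2\), the \(C^{1}\)-norm of \(a^{ij}\) is at most \(K\), and the sup norm of \(b^j\) is at most \(K\). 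These bounds are uniform in the transverse parameter \(\tau\).

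Next, fix \(f\in\mc D_0(\ms A)\) and a plaque \(B_\alpha\times\{\tau\}\). On this plaque, \(f(\cdot,\tau)\in C^2(B_\alpha)\) solves the equation
\[
  a^{ij}\partial_i\partial_jf+b^j\partial_jf=h,
  \qquad h:=\ms Af(\cdot,\tau),
\]
where \(h\) is bounded by \(\|\ms Af\|_\infty\). The coefficients \(a^{ij}\) are Lipschitz (indeed \(C^1\)), so the \(W^{2,q}\) interior estimates for strong solutions apply (Gilbarg--Trudinger, Theorem~9.11). For any \(q>d\) they give
\[
  \|f\|_{W^{2,q}(B'_\alpha)}\le C\bigl(\|f\|_{L^q(B_\alpha)}+\|h\|_{L^q(B_\alpha)}\bigr),
\]
with \(C\) depending only on \(d\), \(q\), \(\theta\), and \(K\). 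Morrey's embedding \(W^{2,q}\hookrightarrow C^{1,1-d/q}\) then bounds \(\sup_{B'_\alpha}|\partial f|\) by \(C(\|f\|_\infty+\|\ms Af\|_\infty)\). Since \(g_{\ms A}\) is uniformly comparable to the Euclidean plaque metric, this gives \eqref{eq:classical-gradient-estimate} on \(U'_\alpha\). Taking the maximum over the finitely many \(\alpha\) gives the global constant.

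The main obstacle is the continuity of the leafwise gradient in the transverse variable, because \(f\) is only assumed \(C^2\) separately on each leaf. To handle it, take \(\tau_n\to\tau\) in \(T_\alpha\). The functions \(f(\cdot,\tau_n)\) are bounded in \(W^{2,q}(B'_\alpha)\), hence precompact in \(C^1(\overline{B''_\alpha})\) on a slightly smaller ball \(B''_\alpha\). Let \(F\) be any subsequential limit. Uniform convergence \(f(\cdot,\tau_n)\to f(\cdot,\tau)\), which holds because \(f\) is continuous on the compact chart closure, identifies \(F=f(\cdot,\tau)\). Hence the whole sequence of derivatives converges: \(\partial f(\cdot,\tau_n)\to\partial f(\cdot,\tau)\) uniformly on \(B''_\alpha\). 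Together with the equicontinuity in \(y\) supplied by the Hölder bound, this gives joint continuity of \(\partial_yf\) in \((y,\tau)\). Using a three-ball nesting from the start, so that the balls \(B''_\alpha\) still cover \(X\), makes this continuity global, and hence the leafwise gradient is a continuous section.
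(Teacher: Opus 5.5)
Your proposal is correct and follows essentially the same route as the paper. The bound comes from uniform interior $W^{2,q}$ estimates (Gilbarg--Trudinger, Theorem~9.11) on plaques of finitely many shrunken flow boxes, followed by Morrey's embedding and the comparability of $g_{\ms A}$ with the plaque metric. Transverse continuity comes from a subsequence argument: uniform $C^{1,\gamma}$ bounds, compact embedding into $C^1$ on a smaller ball, and identification of the limit through uniform convergence of $f(\cdot,\tau_n)$. The paper states one point more explicitly than you do: the shrunken flow boxes should have transversals with compact closure inside larger charts. That is exactly what justifies your ``by compactness'' coefficient bounds and the ``compact chart closure'' you use in the continuity step.
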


\begin{proof}
Choose \(p>d\) and \(0<\alpha<1-d/p\).  Cover \(X\) by finitely many
smaller flow boxes \(B_0\times T_0\) compactly contained in flow boxes
\(B\times T\), with \(\overline{T_0}\) compact, and choose
\[
  B_0\Subset B_1\Subset B_2\Subset B.
\]
For \(\tau\in\overline{T_0}\), set
\[
  f_\tau(y)=f(y,\tau),
  \quad
  h_\tau(y)=(\ms Af)(y,\tau).
\]
Then \(f_\tau\in C^2(B)\) and
\(\ms A_\tau f_\tau=h_\tau\).  Uniform ellipticity and the uniform
coefficient bounds on
\(\overline{B_2}\times\overline{T_0}\) give
\[
  \|f_\tau\|_{W^{2,p}(B_1)}
  \leq
  C\bigl(\|f\|_\infty+\|\ms Af\|_\infty\bigr),
\]
with \(C\) independent of \(\tau\) (see
\cite{GilbargTrudinger2001}, Theorem~9.11, pp.~235--236).
The Sobolev--Morrey embedding gives the same bound for
\(\|f_\tau\|_{C^{1,\alpha}(\overline{B_1})}\), after decreasing
\(\alpha\) if necessary.
The equivalence of the plaque-coordinate and \(g_{\ms A}\)-norms on
the finite family of smaller boxes proves
\eqref{eq:classical-gradient-estimate}.

It remains to prove transverse continuity.  If
\(\tau_k\to\tau\) in \(T_0\), then continuity of \(f\) gives
\[
  f_{\tau_k}\longrightarrow f_\tau
  \quad\text{uniformly on }\overline{B_1}.
\]
The uniform \(C^{1,\alpha}\)-bound and the compact embedding into
\(C^1(\overline{B_0})\) show that every subsequence has a further
subsequence converging in \(C^1(\overline{B_0})\).  Its limit must be
\(f_\tau\), so the full sequence converges.  Thus the first leafwise
derivatives of \(f\) are continuous in \((y,\tau)\).  Since the
coefficients of \(g_{\ms A}\) are continuous, so is
\(\nabla^{g_{\ms A}}f\).
\end{proof}

Let \(\ms A^*\) denote the formal adjoint on a leaf relative to
\(dV_{g_{\ms A}}\).

\begin{definition}
\label{def:maximal-realization}
A function \(f\in C(X)\) belongs to
\(\mc D(\ms A_{\max})\) if there is an \(h\in C(X)\) such that, on every
leaf \(L\),
\begin{equation}\label{eq:maximal-distributional-identity}
  \int_L f\,\ms A^*\varphi\,dV_{g_{\ms A}}
  =
  \int_L h\varphi\,dV_{g_{\ms A}}
  \quad
  \bigl(\varphi\in C_c^\infty(L)\bigr).
\end{equation}
We then set \(\ms A_{\max}f=h\).
\end{definition}

The function \(h\) is unique, because a continuous function whose
distributional pairing with every compactly supported test function
vanishes must vanish on every leaf.  Leafwise integration by parts gives
\[
  \mc D_0(\ms A)\subset\mc D(\ms A_{\max}),
  \quad
  \ms A_{\max}f=\ms Af
  \quad\bigl(f\in\mc D_0(\ms A)\bigr).
\]

\begin{lemma}\label{lem:maximal-closed}
The operator
\[
  \ms A_{\max}\colon
  \mc D(\ms A_{\max})\subset C(X)\longrightarrow C(X)
\]
is closed and densely defined.
\end{lemma}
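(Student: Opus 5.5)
Density is immediate: Definition~\ref{def:maximal-realization} and the integration-by-parts remark give \(\mc D_0(\ms A)\subset\mc D(\ms A_{\max})\), and Lemma~\ref{lem:C20-density} shows that \(\mc D_0(\ms A)\) is uniformly dense in \(C(X)\). So \(\ms A_{\max}\) is densely defined, and only closedness needs an argument.

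For closedness, take \(f_n\in\mc D(\ms A_{\max})\) with \(f_n\to f\) and \(\ms A_{\max}f_n=h_n\to h\) uniformly on \(X\), where \(f,h\in C(X)\). We must show that \(f\in\mc D(\ms A_{\max})\) and \(\ms A_{\max}f=h\), that is, that \eqref{eq:maximal-distributional-identity} holds for the pair \((f,h)\) on every leaf. Fix a leaf \(L\) and a test function \(\varphi\in C_c^\infty(L)\). For each \(n\) the identity
\[
  \int_L f_n\,\ms A^*\varphi\,dV_{g_{\ms A}}=\int_L h_n\varphi\,dV_{g_{\ms A}}
\]
holds. The set \(K=\supp\varphi\) is compact in the intrinsic topology of \(L\), so it has finite \(g_{\ms A}\)-volume. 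The function \(\ms A^*\varphi\) is supported in \(K\) and bounded there; this needs \(\ms A^*\varphi\) to make sense classically, which is the one point to check. On a plaque the formal adjoint is
\[
  \ms A^*\varphi=\rho^{-1}\partial_i\partial_j(\rho a^{ij}\varphi)-\rho^{-1}\partial_j(\rho b^j\varphi),
\]
where \(\rho=\sqrt{\det g_{\ms A}}\). This involves two leafwise derivatives of \(a^{ij}\) and \(\rho\), and one derivative of \(b^j\). These exist and are continuous on each plaque because \(a^{ij}\in C^{2,0}\) (so \(\rho\in C^{2,0}\)) and \(b^j\in C^{1,0}\). Hence \(\ms A^*\varphi\) is a continuous, compactly supported function on \(L\).

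Since \(f_n|_L\to f|_L\) and \(h_n|_L\to h|_L\) uniformly, both sides of the displayed identity converge, with errors bounded by \(\|f_n-f\|_\infty\|\ms A^*\varphi\|_{L^1(L)}\) and \(\|h_n-h\|_\infty\|\varphi\|_{L^1(L)}\). In the limit we obtain \eqref{eq:maximal-distributional-identity} for \((f,h)\). As \(L\) and \(\varphi\) were arbitrary, \(f\in\mc D(\ms A_{\max})\) and, by uniqueness of \(h\), \(\ms A_{\max}f=h\).

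The only potential obstacle is the regularity of \(\ms A^*\varphi\), which the \(C^{2,0}\) and \(C^{1,0}\) hypotheses on the coefficients settle. Non-properness of the leaf is harmless, since every integral is taken intrinsically over the compact set \(K\) in \(L\), and no transverse information is used.
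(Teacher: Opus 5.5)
Your proof is correct and follows the paper's argument. Density comes from Lemma~\ref{lem:C20-density} via \(\mc D_0(\ms A)\subset\mc D(\ms A_{\max})\), and closedness comes from passing to the uniform limit in the leafwise distributional identity, using that \(\supp\varphi\) is compact in \(L\). Your explicit check that \(\ms A^*\varphi\) is continuous and compactly supported, using the \(C^{2,0}\) and \(C^{1,0}\) coefficient regularity, is a detail the paper leaves implicit.
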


\begin{proof}
Density follows from Lemma~\ref{lem:C20-density}.  To prove closedness,
suppose that \(f_n\to f\) and \(\ms A_{\max}f_n\to h\) uniformly on
\(X\).  For every leaf \(L\) and every
\(\varphi\in C_c^\infty(L)\), the defining identity gives
\[
  \int_L f_n\,\ms A^*\varphi\,dV_{g_{\ms A}}
  =
  \int_L(\ms A_{\max}f_n)\varphi\,dV_{g_{\ms A}}.
\]
The support of \(\varphi\) is compact.  Passing to the limit proves
\eqref{eq:maximal-distributional-identity} for \(f\) and \(h\).
Therefore \(f\in\mc D(\ms A_{\max})\) and
\(\ms A_{\max}f=h\).
\end{proof}

The following theorem is the main analytic result of this section.  It
is the form of Candel's construction needed here and combines his
principal results (see \cite{Candel2003},
Propositions~4.7--4.13, Theorem~4.14, and
Propositions~4.15--4.16).

\begin{theorem}\label{thm:candel-diffusion}
Let \((X,\mc F)\) and \(\ms A\) satisfy the hypotheses above.
\begin{enumerate}[label=\textup{(\roman*)}]
\item For every \(\lambda>0\), the map
\[
  \lambda I-\ms A_{\max}\colon
  \mc D(\ms A_{\max})\longrightarrow C(X)
\]
is bijective.  Its resolvent
\[
  R_\lambda=(\lambda I-\ms A_{\max})^{-1}
\]
is positive and satisfies
\begin{equation}\label{eq:resolvent-properties}
  \|R_\lambda\|_{C(X)\to C(X)}\leq\frac1\lambda,
  \quad
  R_\lambda1=\frac1\lambda1.
\end{equation}
If \(q\in C^{2,0}(X)\), then
\(R_\lambda q\in\mc D_0(\ms A)\).

\item The operator \(\ms A_{\max}\) generates a unique strongly
continuous semigroup \(\{P_t\}_{t\geq0}\) on \(C(X)\) such that
\begin{equation}\label{eq:Markov-properties}
  P_t\geq0,\quad
  \|P_t\|\leq1,\quad
  P_t1=1.
\end{equation}
Thus \(\{P_t\}_{t\geq0}\) is a conservative Feller Markov semigroup.

\item Suppose in addition that the coefficients of \(\ms A|_L\) are
smooth on every leaf \(L\).  If \(x\in L\), then
\begin{equation}\label{eq:leafwise-kernel-formula}
  (P_tf)\bigl(\iota_L(x)\bigr)
  =
  \int_L
  k_L(t,x,y)f\bigl(\iota_L(y)\bigr)\,
  dV_{g_{\ms A}}(y)
  \quad
  \bigl(t>0,\ f\in C(X)\bigr),
\end{equation}
where \(k_L\) is the smooth minimal heat kernel of
\(\partial_t-\ms A|_L\).  Moreover,
\begin{equation}\label{eq:kernel-positive-conservative}
  k_L(t,x,y)>0,
  \quad
  \int_L k_L(t,x,y)\,dV_{g_{\ms A}}(y)=1.
\end{equation}
Equivalently, the transition probability from \(\iota_L(x)\) is the
pushforward under the leaf inclusion \(\iota_L\colon L\to X\) of
\[
  k_L(t,x,y)\,dV_{g_{\ms A}}(y).
\]
\end{enumerate}
\end{theorem}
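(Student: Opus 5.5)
The plan follows Candel's Hille--Yosida scheme, in three stages: resolvent, then semigroup, then leafwise kernel.

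\emph{Part (i).} I would first prove the maximum principle for \(\ms A_{\max}\): if \(f\in\mc D(\ms A_{\max})\) and \(\lambda f-\ms A_{\max}f=q\), then \(\lambda\|f\|_\infty\leq\|q\|_\infty\), and \(q\geq0\) implies \(f\geq0\). For \(f\in\mc D_0(\ms A)\) this is elementary. Compactness of \(X\) gives a global maximum point \(z\), which is a leafwise local maximum, so \(\ms Af(z)\leq0\) by the sign convention. For general elements of the maximal domain I would first use leafwise elliptic regularity. The identity \eqref{eq:maximal-distributional-identity} with continuous right side gives \(f|_L\in W^{2,p}_{\mathrm{loc}}\), and Bony's maximum principle then applies. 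This yields injectivity of \(\lambda I-\ms A_{\max}\) and the bound \(\|R_\lambda\|\leq1/\lambda\). Since \(\ms A1=0\), we get \(R_\lambda1=1/\lambda\).

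For surjectivity, take \(q\in C^{2,0}(X)\), which is dense by Lemma~\ref{lem:C20-density}. I would solve \(\lambda u-\ms Au=q\) leafwise by exhaustion. On each leaf \(L\), solve Dirichlet problems on an exhaustion by relatively compact domains. The maximum principle bounds these solutions uniformly by \(\|q\|_\infty/\lambda\), and interior Schauder estimates, made uniform by the \(C^{2,0}\) coefficients and a finite atlas, give a \(C^{2,\alpha}_{\mathrm{loc}}\) limit \(u_L\). The limit is the minimal bounded solution, and it is unique among bounded solutions by the Omori--Yau/Khas'minskii maximum principle. That principle is available because \(g_{\ms A}\) is complete with Ricci curvature bounded below and \(W\) is bounded, as established above.

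The main obstacle is transverse continuity of the glued function \(u\). I would handle it as in Lemma~\ref{lem:classical-gradient-control}. Take \(\tau_k\to\tau\) in a flow box. The plaque restrictions are bounded in \(C^{2,\alpha}\), so a subsequence converges. The limit solves the equation on the plaque, but the plaque is only local, so identifying the limit with \(u\) on the leaf of \(\tau\) requires a global argument. For this I would use the localization/duplicated-leaf device: the difference of two bounded solutions is bounded and \(\lambda\)-harmonic, and a barrier built from the distance function (whose Laplacian is bounded by comparison) forces it to decay. This gives \(u\in\mc D_0(\ms A)\) for \(q\in C^{2,0}\). Closedness of \(\ms A_{\max}\) (Lemma~\ref{lem:maximal-closed}) together with the bound \(\|R_\lambda\|\leq1/\lambda\) then extends surjectivity to all of \(C(X)\), and positivity follows from the maximum principle.

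\emph{Part (ii).} Given (i) and density, the contraction form of the Hille--Yosida theorem produces a unique contraction semigroup. Positivity and \(P_t1=1\) follow from the exponential formula \(P_t=\lim_n(\tfrac nt R_{n/t})^n\), since each factor is positive and fixes \(1\).

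\emph{Part (iii).} I would build the minimal heat kernel \(k_L\) on each leaf by Dirichlet exhaustion. It is smooth, and it is strictly positive by the parabolic strong maximum principle on the connected leaf. It is conservative by Grigor'yan's volume criterion, since the lower Ricci bound gives at most exponential volume growth and \(W\) is bounded. Define \(Q_tf\) by the right side of \eqref{eq:leafwise-kernel-formula}. Taking the Laplace transform, \(\int_0^\infty e^{-\lambda t}Q_tq\,dt\) is a bounded leafwise solution of \(\lambda u-\ms Au=q\). By the uniqueness of bounded solutions established in (i), it equals \(R_\lambda q\). Injectivity of the Laplace transform on continuous bounded functions of \(t\) then gives \(Q_t=P_t\), first for \(q\in C^{2,0}\) and then for all \(f\in C(X)\) by density.
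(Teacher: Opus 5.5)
\paragraph{The gap: transverse continuity in part (i).}
You correctly flag this step as the crux, but you do not carry it out. Along \(\tau_k\to\tau\) you extract a \(C^2\) limit of the plaque restrictions. That limit is only known on a single plaque. Neither a Liouville theorem nor a barrier estimate can identify a function known only on a set of bounded leafwise size.

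The duplicated-leaf space does not help here either. It adjoins an intrinsically topologized copy of one fixed leaf, and it never relates a leaf to its neighbours.

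What is missing is a way to transport the equations on the nearby leaves \(L_{\tau_k}\) onto one fixed leafwise domain of arbitrarily large size. Since \(L_\tau\) may have nontrivial holonomy, large compact pieces of \(L_\tau\) itself need not admit a product neighbourhood. You must pass to the holonomy cover \(\widehat L_\tau\). There, every compact \(K\ni\widehat x\) has a foliated thickening \(\Psi\colon K\times T_0\to X\); this is the device the paper uses in Step~2 of its proof of (i).

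With it, your idea closes:
\begin{enumerate}
\item Pull \(u\) back by \(\Psi(\cdot,\tau_k)\) over an exhaustion of \(\widehat L_\tau\).
\item Use the uniform Schauder bounds and a diagonal argument to get a bounded solution of \(\lambda v-\ms Av=q\) on all of \(\widehat L_\tau\).
\item Identify \(v\) with the lift of \(u|_{L_\tau}\) by uniqueness of bounded solutions on \(\widehat L_\tau\). This cover is again complete, with Ricci curvature bounded below and bounded drift, so your uniqueness argument applies.
\end{enumerate}
Equivalently, your distance barrier gives an \(O(\ee^{-\kappa R})\) localization estimate on thickened balls of radius \(R\). Continuity then follows from continuous dependence of the Dirichlet problem on \(\tau\). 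Either way, the thickening over the holonomy cover is the essential missing idea.

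\paragraph{Comparison with the paper once the gap is filled.}
The paper proves no leafwise Liouville theorem in (i). It builds extremal solutions \(u_-\leq u_+\) from Dirichlet problems with constant boundary values \(\pm\|q\|_\infty/\lambda\). Using the same thickenings, it shows that \(u_-\) is lower and \(u_+\) upper semicontinuous. The upper semicontinuous function \(u_+-u_-\) then attains its maximum on the compact \(X\), where the maximum principle forces it to vanish. Leaf completeness and the Ricci bound enter only in Step~6 of (iii).

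You front-load the Liouville property instead, and this pays off in (iii). Both \((R_\lambda q)\circ\iota_L\) and \(\int_0^\infty\ee^{-\lambda t}Q_tq\,dt\) are bounded solutions on the intrinsic leaf, so they coincide. Laplace inversion then gives \(P_t=Q_t\) on \(C(X)\), and \(\int_Lk_L=Q_t1=P_t1=1\) follows immediately. This bypasses the duplicated-leaf space, the \(C_0(\bar L)\) identification, and the Riesz-measure extension entirely. Nonproperness of \(L\) is harmless, because you only ever restrict global functions to \(L\).

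One correction: Grigor'yan's volume criterion concerns symmetric (weighted) Laplacians. It does not apply directly to \(\Delta_{g_{\ms A}}+W\) with a non-gradient drift. Conservation should instead come from a Khas'minskii-type exhaustion, as in the paper's Step~6, or, in your scheme, simply from \(P_t1=1\).
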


\medskip
\noindent\textbf{Roadmap and analytic difficulty.}
The remaining subsections separate standard analytic tools from the
two points that are specific to the foliated setting.
\begin{itemize}[leftmargin=2em]
\item In Subsection~\ref{subsec:candel-resolvent-feller} we prove
parts~\textup{(i)}--\textup{(ii)}.  The resolvent estimate and the
Hille--Yosida argument are standard.  The first essential point is
transverse continuity: leafwise Dirichlet exhaustions initially give
only lower and upper semicontinuous solutions, and Candel's argument
uses compactness of \(X\) to show that they agree.

\item In Subsection~\ref{subsec:candel-nonproper-heat} we prove
part~\textup{(iii)}.  Standard heat-kernel theory on a smooth
Riemannian manifold is taken as background.  The main difficulty is
to identify the global semigroup with intrinsic heat diffusion on a
nonproper leaf; the duplicated-leaf construction and its intertwining
relations resolve this issue.

\item In Subsection~\ref{subsec:candel-core} we prove that the
classical leafwise domain is a core for the generator.  Once the
resolvent regularity is known, this is a standard graph-norm
approximation.
\end{itemize}

\medskip
\noindent\textbf{Relation with Candel's original proof.}
Theorem~\ref{thm:candel-diffusion} is due to Candel; its construction
is developed in detail in his paper (see \cite{Candel2003},
Propositions~4.7--4.16).  The proof
below follows the same structure but gives the additional details
needed in the present finite-regularity setting.  For
parts~\textup{(i)}--\textup{(ii)}, we describe the maximal
distributional realization, establish the resolvent estimates, and
apply the Hille--Yosida theorem.  For part~\textup{(iii)}, we expand
Candel's duplicated-leaf argument from Propositions~4.15--4.16: we
verify the topology of the enlarged foliated space, define the two
natural embeddings of function spaces, prove the resolvent and
semigroup intertwining identities, and identify the induced
\(C_0\)-semigroup with the intrinsic minimal heat semigroup.  We then
prove conservation of the leafwise heat-kernel mass directly from
completeness, a Ricci lower bound, bounded drift, and the parabolic
maximum principle.  The same proof applies to other leafwise elliptic
operators on compact foliated spaces with noncompact, nonproper leaves.

\subsection{The resolvent and the Feller semigroup}
\label{subsec:candel-resolvent-feller}

We use the Hille--Yosida theorem in the following form
(see \cite{Brezis2011}, Theorems~7.8--7.9).

\begin{hilleyosida}
\leavevmode\par\smallskip
Let \(E\) be a Banach space and let
\(\ms B\colon\mc D(\ms B)\subset E\to E\) be a densely defined linear
operator.  Suppose that, for every \(\mu>0\), the operator
\[
  I+\mu\ms B\colon\mc D(\ms B)\longrightarrow E
\]
is bijective and
\[
  \bigl\|(I+\mu\ms B)^{-1}\bigr\|_{E\to E}\leq1.
\]
Then \(-\ms B\) generates a unique strongly continuous contraction
semigroup \(\{P_t\}_{t\geq0}\) on \(E\).  Moreover,
\[
  P_tf
  =
  \lim_{n\to\infty}
  \left(I+\frac tn\ms B\right)^{-n}f
  \quad(t>0,\ f\in E),
\]
where the limit is taken in \(E\).
\end{hilleyosida}

We first construct the resolvent of \(\ms A_{\max}\).  Once surjectivity
and the norm estimate are established, the Feller semigroup follows
directly from the theorem above.

\begin{proof}[Proof of parts~\textup{(i)} and~\textup{(ii)} of
Theorem~\ref{thm:candel-diffusion}]
\leavevmode\par\smallskip

\noindent\emph{1. The resolvent estimate.}
Fix \(\lambda>0\) and \(u\in\mc D_0(\ms A)\).  Choose \(x_0\in X\) such
that \(|u(x_0)|=\|u\|_\infty\).  If
\(u(x_0)=\|u\|_\infty\), then \(x_0\) is a local maximum of \(u\) on its
leaf, so \(\ms Au(x_0)\leq0\).  Hence
\[
  \bigl|(\lambda I-\ms A)u(x_0)\bigr|
  =
  \lambda\|u\|_\infty-\ms Au(x_0)
  \geq\lambda\|u\|_\infty.
\]
If \(u(x_0)=-\|u\|_\infty\), apply the same argument to \(-u\).
Therefore
\begin{equation}\label{eq:classical-resolvent-estimate}
  \|\lambda u-\ms Au\|_\infty
  \geq\lambda\|u\|_\infty.
\end{equation}
In particular, a classical solution of the resolvent equation is unique,
and any such solution satisfies
\[
  \|u\|_\infty\leq\lambda^{-1}
  \|(\lambda I-\ms A)u\|_\infty.
\]

\smallskip
\noindent\emph{2. Existence for \(C^{2,0}\) data.}
Let \(q\in C^{2,0}(X)\), and set
\[
  c_-=-\frac{\|q\|_\infty}{\lambda},
  \quad
  c_+=\frac{\|q\|_\infty}{\lambda}.
\]
On every relatively compact regular domain \(D\) in a leaf, solve
\[
  \lambda u_D^\pm-\ms Au_D^\pm=q\quad\text{in }D,
  \quad
  u_D^\pm=c_\pm\quad\text{on }\partial D.
\]
Since
\[
  (\lambda I-\ms A)c_-=-\|q\|_\infty\leq q
  \leq\|q\|_\infty=(\lambda I-\ms A)c_+,
\]
comparison gives \(c_-\leq u_D^\pm\leq c_+\).  If \(D\Subset E\), apply
comparison on \(D\) to \(u_D^\pm\) and \(u_E^\pm|_D\).  This gives
\[
  c_-\leq u_D^-\leq u_E^-\leq u_E^+\leq u_D^+\leq c_+
  \quad(D\Subset E).
\]
Thus the lower solutions increase and the upper solutions decrease
under exhaustion.  For \(x\in X\), define
\[
  u_-(x)=\sup_{\substack{D\Subset L(x)\\x\in D}}u_D^-(x),
  \quad
  u_+(x)=\inf_{\substack{D\Subset L(x)\\x\in D}}u_D^+(x).
\]
Interior Schauder estimates and a diagonal argument show that these
bounded functions are \(C^2\) on every leaf and satisfy
\[
  \lambda u_\pm-\ms Au_\pm=q,
  \quad
  u_-\leq u_+.
\]
Comparison also shows that \(u_-\) is the smallest bounded solution
that is everywhere at least \(c_-\), whereas \(u_+\) is the largest
bounded solution that is everywhere at most \(c_+\).

We next verify the required semicontinuity across leaves.  Fix
\(x\in X\), choose a lift \(\widehat x\) to the holonomy cover
\(\widehat L\to L(x)\), and let \(\varepsilon>0\).  The extremal
construction is unchanged by passage to this cover.  Indeed, the lower
extremal solution on \(\widehat L\) is invariant under every deck
transformation, because its defining family of Dirichlet problems is
deck-invariant.  It therefore descends to a bounded solution on
\(L(x)\) that is at least \(c_-\).  Minimality of the lower extremal
solution on \(L(x)\) gives one inequality, while minimality on
\(\widehat L\), applied to the lift of \(u_-\), gives the reverse
inequality.  Thus the two solutions agree under pullback.  The same
argument, using maximality, applies to the upper solutions.

It follows that there is a bounded regular domain
\(D\Subset\widehat L\), containing \(\widehat x\), for which the lower
Dirichlet solution \(v_-\) satisfies
\[
  v_-(\widehat x)>u_-(x)-\varepsilon.
\]
Because the holonomy cover has trivial holonomy, a neighborhood of
\(\overline{D}\) has a foliated thickening.  More precisely, after
shrinking around \(\overline{D}\), there are a transversal \(T_0\), a
distinguished point \(\tau_0\in T_0\), and a leafwise local
diffeomorphism
\[
  \Psi\colon \overline{D}\times T_0\longrightarrow X
\]
such that \(\Psi(\,\cdot\,,\tau_0)\) is the covering map on
\(\overline{D}\).  Pull back \(\ms A\) and \(q\) by \(\Psi\).  On each
copy \(D\times\{\tau\}\), let \(v_\tau\) solve the Dirichlet problem
with boundary value \(c_-\); thus \(v_{\tau_0}=v_-\).

The coefficients, the right-hand side, and the induced boundary
geometry vary continuously with \(\tau\) in the norms required by the
Schauder estimates.  Hence the family \(\{v_\tau\}\) is locally
precompact in \(C^2(\overline{D})\).  If \(\tau_j\to\tau\), every
convergent subsequence solves the Dirichlet problem at \(\tau\);
uniqueness identifies its limit with \(v_\tau\).  The whole family
therefore depends continuously on \(\tau\).  Comparison on
\(D\times\{\tau\}\) gives
\[
  v_\tau\leq u_-\circ\Psi
\]
wherever this expression is defined.  Applying this inequality to
points converging to \((\widehat x,\tau_0)\) yields
\[
  \liminf_{y\to x}u_-(y)
  \geq v_-(\widehat x)>u_-(x)-\varepsilon.
\]
Thus \(u_-\) is lower semicontinuous.  The same argument, with boundary
value \(c_+\) and the inequalities reversed, shows that \(u_+\) is
upper semicontinuous.  This is Candel's transverse argument (see
\cite{Candel2003}, Lemma~4.8 and Proposition~4.9).

Consequently,
\[
  h:=u_+-u_-
\]
is a bounded nonnegative upper semicontinuous function, is \(C^2\) on
every leaf, and satisfies
\[
  \ms Ah=\lambda h.
\]
Compactness of \(X\) gives a point where \(h\) attains its maximum.  If
this maximum were positive, the leafwise maximum principle would give
\(\ms Ah\leq0\) there, contrary to \(\ms Ah=\lambda h>0\).  Therefore
\(u_-=u_+\).  Their common value \(u\) is both lower and upper
semicontinuous, hence continuous, and
\begin{equation}\label{eq:classical-resolvent-equation}
  \lambda u-\ms Au=q,
  \quad
  \|u\|_\infty\leq\lambda^{-1}\|q\|_\infty.
\end{equation}
Since \(\ms Au=\lambda u-q\) is continuous on \(X\), we have
\(u\in\mc D_0(\ms A)\).  This proves the resolvent statement for
\(C^{2,0}\) data (see \cite{Candel2003}, Proposition~4.10).

\smallskip
\noindent\emph{3. Continuous data and the maximal realization.}
For an arbitrary \(q\in C(X)\), choose
\(q_j\in C^{2,0}(X)\) converging uniformly to \(q\), and let
\(u_j\in\mc D_0(\ms A)\) solve
\[
  \lambda u_j-\ms Au_j=q_j.
\]
The estimate \eqref{eq:classical-resolvent-estimate} gives
\[
  \|u_j-u_k\|_\infty
  \leq\lambda^{-1}\|q_j-q_k\|_\infty.
\]
Thus \(u_j\to u\) uniformly for some \(u\in C(X)\), while
\[
  \ms Au_j=\lambda u_j-q_j
  \longrightarrow\lambda u-q
\]
uniformly.  Lemma~\ref{lem:maximal-closed} gives
\[
  u\in\mc D(\ms A_{\max}),
  \quad
  (\lambda I-\ms A_{\max})u=q.
\]
Thus \(\lambda I-\ms A_{\max}\) is onto \(C(X)\).

If \(f\in\mc D(\ms A_{\max})\) satisfies
\((\lambda I-\ms A_{\max})f=0\), then, on each plaque,
\(\ms Af=\lambda f\) in the distributional sense.  Interior elliptic
regularity first gives \(f\in W_{\mathrm{loc}}^{2,p}\) for every finite
\(p\).  Taking \(p>d\) and then applying local Schauder regularity
shows that \(f\) is \(C^2\) on each plaque.  A positive global maximum
of \(f\), or a negative global minimum, now contradicts the pointwise
maximum principle for
\(\lambda f-\ms Af=0\).  Compactness of \(X\) therefore implies
\(f=0\).  Hence the resolvent equation is injective as well as
surjective.  This is Candel's argument (see \cite{Candel2003},
Propositions~4.12--4.13), with the domain written as
in Definition~\ref{def:maximal-realization}.

We may now set
\[
  R_\lambda=(\lambda I-\ms A_{\max})^{-1}.
\]
The estimate for the approximating solutions passes to the uniform limit:
\begin{equation}\label{eq:maximal-resolvent-estimate}
  \|R_\lambda q\|_\infty
  \leq\lambda^{-1}\|q\|_\infty.
\end{equation}
If \(q\geq0\), choose \(\varepsilon_j\downarrow0\) and
\(p_j\in C^{2,0}(X)\) such that
\(\|p_j-q\|_\infty<\varepsilon_j\), and set
\(q_j=p_j+\varepsilon_j\).  Then \(q_j\geq0\) and \(q_j\to q\)
uniformly.  The minimum principle applied to the classical solutions
\(R_\lambda q_j\) gives \(R_\lambda q_j\geq0\).  Passing to the uniform
limit shows that \(R_\lambda q\geq0\).  Thus \(R_\lambda\) is positive.
Finally,
\[
  (\lambda I-\ms A_{\max})(\lambda^{-1}1)=1,
\]
so uniqueness gives \(R_\lambda1=\lambda^{-1}1\).  This proves
part~\textup{(i)}.

\smallskip
\noindent\emph{4. Verification of the Hille--Yosida hypotheses.}
Put
\[
  E=C(X),
  \quad
  \ms B=-\ms A_{\max},
  \quad
  \mc D(\ms B)=\mc D(\ms A_{\max}).
\]
The domain of \(\ms B\) is dense by Lemma~\ref{lem:maximal-closed}.  For
every \(\mu>0\),
\[
  I+\mu\ms B
  =
  I-\mu\ms A_{\max}
  =
  \mu\bigl(\mu^{-1}I-\ms A_{\max}\bigr)
\]
is bijective, and
\begin{equation}\label{eq:accretive-resolvent}
  (I+\mu\ms B)^{-1}
  =
  \mu^{-1}R_{\mu^{-1}},
  \quad
  \|(I+\mu\ms B)^{-1}\|\leq1
\end{equation}
by \eqref{eq:maximal-resolvent-estimate}.  Thus \(\ms B\) is
\(m\)-accretive in the standard terminology (see
\cite{Brezis2011}, Theorem~7.8).  The Hille--Yosida theorem
therefore gives a unique strongly continuous contraction semigroup
\(\{P_t\}_{t\geq0}\) on \(E\) solving
\[
  \frac{d}{dt}P_tf+\ms B P_tf=0
  \quad
  \bigl(f\in\mc D(\ms B)\bigr).
\]
Equivalently, the generator of this semigroup is
\(-\ms B=\ms A_{\max}\).

The Hille--Yosida exponential formula gives
\begin{equation}\label{eq:Yosida-formula}
  P_tf
  =
  \lim_{n\to\infty}
  \left(I-\frac tn\ms A_{\max}\right)^{-n}f
  =
  \lim_{n\to\infty}
  \left(\frac ntR_{n/t}\right)^nf
  \quad(t>0)
\end{equation}
with convergence in \(C(X)\).  By
\eqref{eq:accretive-resolvent}, each factor
\((n/t)R_{n/t}\) is a contraction.  Part~\textup{(i)} also gives
\[
  \frac ntR_{n/t}\geq0,
  \quad
  \frac ntR_{n/t}1=1.
\]
Passing to the limit in \eqref{eq:Yosida-formula} shows that every
\(P_t\) is positive, contractive, and fixes \(1\).  Therefore
\(\{P_t\}_{t\geq0}\) is a conservative Markov semigroup.  Since it is a
strongly continuous semigroup on \(C(X)\), it is Feller.  This proves
part~\textup{(ii)}.
\end{proof}

\subsection{The heat kernel on a nonproper leaf}
\label{subsec:candel-nonproper-heat}

We prove Theorem~\ref{thm:candel-diffusion}\textup{(iii)} by identifying
the global semigroup on \(C(X)\) with the intrinsic heat semigroup on a
possibly nonproper leaf.

We use standard heat-kernel theory, local elliptic and parabolic
regularity, and the strong maximum principle.  A smooth elliptic
Laplace operator without a zeroth-order term has a smooth minimal
nonnegative heat kernel obtained by Dirichlet exhaustion, and its heat
operators form a strongly continuous positive contraction semigroup
on \(C_0\).  For the Laplace--Beltrami operator this is standard
(see \cite{Li2012}, Chapters~10--11).  The same arguments apply to the
smooth drift operator \(\ms A_L=\Delta_{g_{\ms A,L}}+W\).
It remains to prove that the leafwise heat semigroup agrees with the
global semigroup.

\begin{proof}[Proof of part~\textup{(iii)} of
Theorem~\ref{thm:candel-diffusion}]

Fix a leaf \(L\), let
\(\iota_L\colon L\to X\) be its inclusion, and fix \(x\in L\).  The
argument has six steps.

\smallskip
\noindent\textbf{Step 1: construction of the duplicated-leaf space.}
The difficulty is that the intrinsic topology of a leaf
may be strictly finer than its subspace topology in \(X\).  Thus a
function in \(C_0(L)\) need not extend continuously to \(X\), and one
cannot define a semigroup on \(C_0(L)\) by choosing an extension and
restricting \(P_t\).  Indeed, the result would be independent of the
extension only if the transition probability from a point of \(L\)
were already known to be concentrated on \(L\), which is the conclusion
we are proving.

We use Candel's duplicated-leaf construction (see
\cite{Candel2003}, Proposition~4.15).  Let \(\bar{L}\) be a second copy
of \(L\), and let
\[
  s\colon\bar{L}\longrightarrow L
\]
be the canonical diffeomorphism.  Set
\[
  \bar{X}_L:=X\sqcup\bar{L}
\]
as a disjoint union of sets.

\smallskip
\noindent\emph{The topology.}
We define a topology on \(\bar{X}_L\) by declaring the following subsets to
form a basis:
\begin{equation}\label{eq:duplicated-leaf-basis}
  s^{-1}(V),
  \quad
  \mc N(U,K)
  :=
  U\cup s^{-1}\bigl(\iota_L^{-1}(U)\setminus K\bigr),
\end{equation}
where \(V\subset L\) is intrinsically open, \(U\subset X\) is open, and
\(K\subset L\) is compact.  For example,
\[
  s^{-1}(V)\cap\mc N(U,K)
  =
  s^{-1}\bigl(V\cap\iota_L^{-1}(U)\setminus K\bigr)
\]
and
\[
  \mc N(U_1,K_1)\cap\mc N(U_2,K_2)
  =
  \mc N(U_1\cap U_2,K_1\cup K_2),
\]
which verifies the basis intersection condition.

The induced topology on \(X\subset \bar{X}_L\) is the original topology of
\(X\), while the induced topology on \(\bar{L}\subset \bar{X}_L\) is its
intrinsic manifold topology.

\smallskip
\noindent\emph{Topological properties.}
The required properties may be checked separately.
\begin{itemize}[leftmargin=2em]
\item \emph{Compactness.}
Let an open cover of \(\bar{X}_L\) be given.  Since \(X\) is compact, finitely
many basic sets \(\mc N(U_i,K_i)\) cover \(X\).  If
\(K=K_1\cup\cdots\cup K_N\), these same sets cover
\[
  X\cup s^{-1}(L\setminus K).
\]
The remaining set \(s^{-1}(K)\) is compact in \(\bar{L}\), so it has a
finite subcover.  Hence \(\bar{X}_L\) is compact.

\item \emph{Hausdorff property.}
For two distinct points of \(X\), choose disjoint open neighborhoods
\(U_1,U_2\subset X\).  Then
\(\mc N(U_1,\varnothing)\) and \(\mc N(U_2,\varnothing)\) separate
them in \(\bar{X}_L\).  Two points of \(\bar{L}\) are separated by disjoint
intrinsic open subsets of \(\bar{L}\).  If \(z\in X\) and
\(\bar{y}\in\bar{L}\), choose a relatively compact intrinsic neighborhood
\(V\) of \(s(\bar{y})\).  Then \(s^{-1}(V)\) and
\(\mc N(X,\overline V)\) are disjoint neighborhoods of \(\bar{y}\) and
\(z\), respectively.  Thus \(\bar{X}_L\) is Hausdorff.

\item \emph{Second countability and metrizability.}
Let \(\{U_i\}\) and \(\{V_j\}\) be countable bases of \(X\) and \(L\),
respectively, and choose a compact exhaustion
\[
  K_1\subset\operatorname{int}K_2\subset K_2
  \subset\operatorname{int}K_3\subset\cdots,
  \quad
  \bigcup_nK_n=L.
 \]
The sets
\[
  s^{-1}(V_j),
  \quad
  \mc N(U_i,K_n)
\]
form a countable basis for \(\bar{X}_L\).  Since \(\bar{X}_L\) is
compact, Hausdorff, and second countable, it is metrizable.

\item \emph{Foliated structure.}
Candel's duplicated-plaque construction equips \(\bar{X}_L\) with a foliated
structure.  Its original leaves are the leaves of \(X\), while \(\bar{L}\),
with its intrinsic manifold structure, is one additional leaf.
Concretely, shrink a flow box \(U\cong B\times T\) in \(X\) and choose
the compact set \(K\) so that the components of
\(\iota_L^{-1}(U)\setminus K\) meeting the smaller flow box are
plaques.  Adjoin copies of these plaques to the original flow box and
topologize their transverse parameters by
\eqref{eq:duplicated-leaf-basis}.  This gives another local product
\(B\times T'\).  Its plaque coordinate changes are the original
coordinate changes on \(X\); hence the resulting atlas is \(C^{r,0}\).
\end{itemize}

Because the space is metrizable, its topology may be tested by
sequences.  Directly from \eqref{eq:duplicated-leaf-basis}, a sequence
\(\bar{y}_n\in\bar{L}\) converges to \(z\in X\) if and only if
\begin{equation}\label{eq:duplicated-leaf-convergence}
  \iota_L(s(\bar{y}_n))\longrightarrow z\quad\text{in }X,
  \quad
  s(\bar{y}_n)\longrightarrow\infty\quad\text{in }L.
\end{equation}
Here convergence to infinity means that the sequence is eventually
outside every compact subset of \(L\).

\smallskip
\noindent\textbf{Step 2: function spaces and the closed leaf subspace.}
The copy \(\bar L\) has the intrinsic topology of \(L\) and is
topologically embedded as an open leaf of \(\bar X_L\).  The
construction therefore changes no leafwise geometry while making zero
extension available: \(C_0(\bar L)\) becomes a closed subspace of
\(C(\bar X_L)\).  This is precisely what fails when the intrinsic
topology of \(L\) is finer than its subspace topology in \(X\), and it
is the functional-analytic reason for passing to the duplicated space.

We regard \(X\) and \(\bar{L}\) as the two disjoint summands of \(\bar{X}_L\).
For \(F\in C(\bar{X}_L)\), the symbols \(F|_X\) and \(F|_{\bar{L}}\) denote
the corresponding restrictions.  The space \(C_0(\bar{L})\) consists of
the continuous functions on the intrinsic manifold \(\bar{L}\) that
vanish at infinity; equivalently, \(\varphi\in C_0(\bar L)\) if and
only if
\[
  \{\bar{y}\in\bar{L}:|\varphi(\bar{y})|\geq\varepsilon\}
  \quad\text{is compact for every }\varepsilon>0.
\]

\smallskip
\noindent\emph{The two embeddings.}
Define

\begin{equation}\label{eq:duplicated-leaf-embeddings}
  \begin{aligned}
  &J\colon C(X)\longrightarrow C(\bar{X}_L),
  &&Jf|_X=f,
  &&Jf(\bar{y})=f\bigl(\iota_L(s(\bar{y}))\bigr),\\
  &I\colon C_0(\bar{L})\longrightarrow C(\bar{X}_L),
  &&I\varphi|_X=0,
  &&I\varphi|_{\bar{L}}=\varphi.
  \end{aligned}
\end{equation}
Thus \(J\) copies \(f\) to the new leaf through
\(\iota_L\circ s\), whereas \(I\) extends a function on \(\bar{L}\) by
zero on \(X\).  Both maps are linear isometries:
\[
  \|Jf\|_{C(\bar{X}_L)}=\|f\|_{C(X)},
  \quad
  \|I\varphi\|_{C(\bar{X}_L)}=\|\varphi\|_{C_0(\bar{L})}.
\]
Continuity of \(Jf\) at a point of the original space \(X\) follows from
\eqref{eq:duplicated-leaf-convergence}.  If
\(\varphi\in C_0(\bar{L})\) and \(\varepsilon>0\), choose a compact
\(K\subset L\) such that
\(|\varphi(\bar{y})|<\varepsilon\) whenever \(s(\bar{y})\notin K\).  The zero
extension \(I\varphi\) then has absolute value less than
\(\varepsilon\) on \(\mc N(X,K)\cap\bar{L}\), which proves its continuity
at every point of \(X\).

\smallskip
\noindent\emph{The closed subspace associated with the copied leaf.}
Since \(I\) is an isometry, its range is closed.  We claim that
\[
  Y:=IC_0(\bar{L})
  =
  \{F\in C(\bar{X}_L):F|_X=0\}.
\]
Only the reverse inclusion needs verification.  If \(F|_X=0\) but
\(F|_{\bar{L}}\) did not vanish at infinity, there would be
\(\varepsilon>0\) and a sequence \(\bar{y}_n\) escaping every compact
subset of \(\bar{L}\) such that
\(|F(\bar{y}_n)|\geq\varepsilon\).  After taking a
subsequence, compactness of \(X\) gives
\(\iota_L(s(\bar{y}_n))\to z\in X\).  Equation
\eqref{eq:duplicated-leaf-convergence} then gives \(\bar{y}_n\to z\) in
\(\bar{X}_L\), contradicting \(F(\bar{y}_n)\to F(z)=0\).
Thus the needed closed subspace exists in \(C(\bar X_L)\), although
the analogous zero-extension subspace generally does not exist in
\(C(X)\).

\smallskip
\noindent\textbf{Step 3: the copied metric, operator, and semigroup.}
Write \(g_{\ms A,L}\) and \(\ms A_L\) for the restrictions to \(L\) of
the principal metric and the leafwise operator, respectively.  Define
their copies on \(\bar{L}\) by
\[
  g_{\ms A,\bar{L}}=s^*g_{\ms A,L},
  \quad
  \ms A_{\bar{L}}(\varphi\circ s)
  =(\ms A_L\varphi)\circ s
  \quad\bigl(\varphi\in C^\infty(L)\bigr).
\]
Then \(s\) is an isometry and
\begin{equation}\label{eq:copied-volume-form}
  s^*dV_{g_{\ms A,L}}=dV_{g_{\ms A,\bar{L}}}.
\end{equation}
On leafwise smooth functions \(F\) on \(\bar{X}_L\), define
\(\bar{\ms A}\) by
\[
  (\bar{\ms A}F)|_X=\ms A(F|_X),
  \quad
  (\bar{\ms A}F)|_{\bar{L}}
  =\ms A_{\bar{L}}(F|_{\bar{L}}).
\]
The copied local formulas have the same transverse regularity as the
original coefficients.  Hence parts~\textup{(i)}--\textup{(ii)}, applied
to \((\bar{X}_L,\bar{\ms A})\), give
\[
  \bar{R}_\lambda
  :=
  (\lambda I-\bar{\ms A}_{\max})^{-1}
  \quad(\lambda>0)
\]
and a conservative Feller semigroup
\(\{\bar{P}_t\}_{t\geq0}\) on \(C(\bar{X}_L)\) with generator
\(\bar{\ms A}_{\max}\).  Here \(\bar{\ms A}_{\max}\) is the maximal
continuous realization on \(C(\bar{X}_L)\) defined as in
Definition~\ref{def:maximal-realization}.

Finally, \(\ms A_{\bar{L},\max}\) denotes the maximal distributional
realization of \(\ms A_{\bar{L}}\) on \(C_0(\bar{L})\).  Explicitly,
\(v\in\mc D(\ms A_{\bar{L},\max})\) if there is an
\(h\in C_0(\bar{L})\) such that
\[
  \int_{\bar{L}}v\,\ms A_{\bar{L}}^*\psi\,
  dV_{g_{\ms A,\bar{L}}}
  =
  \int_{\bar{L}}h\psi\,dV_{g_{\ms A,\bar{L}}}
  \quad
  \bigl(\psi\in C_c^\infty(\bar{L})\bigr),
\]
where \(\ms A_{\bar{L}}^*\) is the formal adjoint relative to
\(dV_{g_{\ms A,\bar{L}}}\).  We then set
\(\ms A_{\bar{L},\max}v=h\).  The proof of
Lemma~\ref{lem:maximal-closed} applies verbatim on the locally compact
manifold \(\bar L\): the operator \(\ms A_{\bar L,\max}\) is closed,
and it is densely defined because
\(C_c^\infty(\bar L)\subset\mc D(\ms A_{\bar L,\max})\) is dense in
\(C_0(\bar L)\).

\smallskip
\noindent\textbf{Step 4: the two resolvent intertwining identities.}
We show that the resolvent on the duplicated space respects both
canonical embeddings: \(J\) identifies the original global problem
with its copy on \(\bar{X}_L\), while \(I\) identifies the intrinsic
problem on \(\bar{L}\) with the closed subspace \(Y\).

\smallskip
\noindent\emph{The \(J\)-intertwining.}
Let \(f\in C(X)\), and put \(u=R_\lambda f\).  On every original leaf,
\[
  (\lambda I-\bar{\ms A}_{\max})Ju=Jf.
\]
The same identity holds distributionally on \(\bar{L}\), because both the
operator and the restrictions of \(u\) and \(f\) have been copied
through \(s\).  Thus \(Ju\in\mc D(\bar{\ms A}_{\max})\) and the
identity holds on all of \(\bar{X}_L\).  Uniqueness of the resolvent
equation gives
\begin{equation}\label{eq:J-resolvent-intertwining}
  \bar{R}_\lambda J=JR_\lambda.
\end{equation}

\smallskip
\noindent\emph{The \(I\)-intertwining.}
Take \(\varphi\in C_0(\bar{L})\) and set
\[
  F=\bar{R}_\lambda I\varphi.
\]
Restriction of the resolvent equation to the original space gives
\[
  (\lambda I-\ms A_{\max})(F|_X)=0.
\]
Here \(F|_X\in\mc D(\ms A_{\max})\), because the distributional
identity defining \(\bar{\ms A}_{\max}\) restricts to every original
leaf.
The injectivity proved in part~\textup{(i)} implies \(F|_X=0\).
Therefore \(F\in Y\), so \(F=Iv\) for a unique \(v\in C_0(\bar{L})\).
Restricting the distributional equation to \(\bar{L}\) gives
\begin{equation}\label{eq:copied-leaf-resolvent-equation}
  (\lambda I-\ms A_{\bar{L},\max})v=\varphi.
\end{equation}
This proves surjectivity of
\(\lambda I-\ms A_{\bar L,\max}\).  If
\((\lambda I-\ms A_{\bar L,\max})v=0\), then \(Iv\) solves the
homogeneous resolvent equation on \(\bar X_L\), so \(v=0\).
Thus \(\lambda I-\ms A_{\bar{L},\max}\) is bijective on
\(C_0(\bar{L})\), and, with
\[
  R_\lambda^{\bar{L}}
  :=
  (\lambda I-\ms A_{\bar{L},\max})^{-1},
\]
we have
\begin{equation}\label{eq:I-resolvent-intertwining}
  \bar{R}_\lambda I=IR_\lambda^{\bar{L}}.
\end{equation}

The identities \eqref{eq:J-resolvent-intertwining} and
\eqref{eq:I-resolvent-intertwining} show that the following
diagram commutes:
\[
\begin{tikzcd}[column sep=large,row sep=large]
C(X)
  \arrow[r,"J",color=blue!70!black]
  \arrow[d,"R_\lambda"',color=blue!70!black]
&
C(\bar{X}_L)
  \arrow[d,"\bar{R}_\lambda"]
&
C_0(\bar{L})
  \arrow[l,"I"',color=red!70!black]
  \arrow[d,"R_\lambda^{\bar{L}}",color=red!70!black]
\\
C(X)
  \arrow[r,"J"',color=blue!70!black]
&
C(\bar{X}_L)
&
C_0(\bar{L})
  \arrow[l,"I",color=red!70!black]
\end{tikzcd}
\]
The left blue square is the \(J\)-intertwining, and the right red
square is the \(I\)-intertwining.

\smallskip
\noindent\emph{Passage to the semigroups.}
Applying the Euler--Yosida formula to
\eqref{eq:J-resolvent-intertwining} gives
\begin{equation}\label{eq:J-semigroup-intertwining}
  \bar{P}_tJ
  =
  \lim_{n\to\infty}
  \left(\frac nt\bar{R}_{n/t}\right)^nJ
  =
  J\lim_{n\to\infty}
  \left(\frac ntR_{n/t}\right)^n
  =
  JP_t.
\end{equation}
Equation \eqref{eq:I-resolvent-intertwining} shows that
\(Y=IC_0(\bar{L})\) is invariant under every resolvent
\(\bar{R}_\lambda\), hence under every Euler--Yosida approximation and
therefore under \(\bar{P}_t\).  The restricted semigroup
\[
  Q_t^{\bar{L}}:=I^{-1}\bar{P}_tI
\]
has resolvent \(R_\lambda^{\bar L}\), by
\eqref{eq:I-resolvent-intertwining}.  Its generator is therefore
\(\ms A_{\bar{L},\max}\), and
\begin{equation}\label{eq:I-semigroup-intertwining}
  \bar{P}_tI=IQ_t^{\bar{L}}.
\end{equation}
Consequently, for every \(t\geq0\), the corresponding semigroup
diagram also commutes:
\[
\begin{tikzcd}[column sep=large,row sep=large]
C(X)
  \arrow[r,"J",color=blue!70!black]
  \arrow[d,"P_t"',color=blue!70!black]
&
C(\bar{X}_L)
  \arrow[d,"\bar{P}_t"]
&
C_0(\bar{L})
  \arrow[l,"I"',color=red!70!black]
  \arrow[d,"Q_t^{\bar{L}}",color=red!70!black]
\\
C(X)
  \arrow[r,"J"',color=blue!70!black]
&
C(\bar{X}_L)
&
C_0(\bar{L})
  \arrow[l,"I",color=red!70!black]
\end{tikzcd}
\]
Evaluating the left square gives the pointwise comparison that will be
used in Step~6.  Namely, if \(x\in L\) and
\(\bar{x}=s^{-1}(x)\in\bar{L}\), then
\begin{equation}\label{eq:pointwise-J-semigroup}
  \bar{P}_t(Jf)(\bar{x})
  =
  (JP_tf)(\bar{x})
  =
  (P_tf)\bigl(\iota_L(x)\bigr).
\end{equation}
Thus \(P_t\) and \(\bar{P}_t\) are not being identified as operators
on the same function space.  Rather, \(J\) intertwines them, and
\eqref{eq:pointwise-J-semigroup} compares their values on the original
leaf \(L\) and its copy \(\bar{L}\).

\smallskip
\noindent\textbf{Step 5: identification with the intrinsic heat semigroup.}
Step~4 produced \(Q_t^{\bar{L}}\) abstractly by restricting the global
semigroup \(\bar{P}_t\) to \(Y\).  We prove that
\(Q_t^{\bar{L}}\) is the minimal heat semigroup of
\(\ms A_{\bar{L}}\) on the intrinsic manifold \(\bar{L}\).  This gives
the heat-kernel formula for initial data in \(C_0(\bar{L})\) and the
strict positivity of its kernel.  The extension to the bounded initial
data arising from \(C(X)\) is deferred to Step~6.

\smallskip
\noindent\emph{The intrinsic heat semigroup.}
Let \(k_{\bar{L}}\) be the smooth minimal heat kernel of
\(\partial_t-\ms A_{\bar{L}}\).  By the standard heat-kernel theory recalled
above, together with the uniform local geometry and coefficient bounds
inherited from the compact foliated space \(\bar X_L\), the operators
\[
  H_t^{\bar{L}}\varphi(\bar{x})
  =
  \int_{\bar{L}}
  k_{\bar{L}}(t,\bar{x},\bar{y})\varphi(\bar{y})\,
  dV_{g_{\ms A,\bar{L}}}(\bar{y})
\]
map \(C_0(\bar{L})\) into itself, are positive contractions, satisfy the
semigroup identity, and converge strongly to the identity as
\(t\downarrow0\).  Thus they form a strongly continuous semigroup on
\(C_0(\bar{L})\).  Their sub-Markov property gives
\[
  \int_{\bar{L}}k_{\bar{L}}(t,\bar{x},\bar{y})\,
  dV_{g_{\ms A,\bar{L}}}(\bar{y})\leq1.
\]

\smallskip
\noindent\emph{Identification through the resolvent.}
The resolvent of this semigroup,
\[
  S_\lambda\varphi
  =
  \int_0^\infty e^{-\lambda t}H_t^{\bar{L}}\varphi\,dt
\]
solves
\[
  (\lambda I-\ms A_{\bar{L},\max})S_\lambda\varphi=\varphi
\]
distributionally.  Indeed, for smooth compactly supported \(\varphi\),
integration of
\[
  -\frac{d}{dt}\bigl(e^{-\lambda t}H_t^{\bar{L}}\varphi\bigr)
  =
  e^{-\lambda t}
  (\lambda I-\ms A_{\bar{L}})H_t^{\bar{L}}\varphi
\]
from \(0\) to \(\infty\) gives the resolvent equation; approximation and
closedness give it for every \(\varphi\in C_0(\bar{L})\).

The solution is unique in \(C_0(\bar{L})\).  Indeed, suppose that
\(w\in\mc D(\ms A_{\bar{L},\max})\) satisfies
\[
  (\lambda I-\ms A_{\bar{L},\max})w=0.
\]
Leafwise elliptic regularity gives \(w\in C^2(\bar{L})\).  If
\(\sup_{\bar{L}}w>0\), then \(w\in C_0(\bar{L})\) attains a positive
maximum at some point \(\bar{x}\).  At this point,
\(\ms A_{\bar{L}}w(\bar{x})\leq0\), whereas the equation gives
\(\ms A_{\bar{L}}w(\bar{x})=\lambda w(\bar{x})>0\), a contradiction.
Applying the same argument to \(-w\) gives \(w=0\).  Consequently,
\[
  S_\lambda=R_\lambda^{\bar{L}}
  \qquad(\lambda>0).
\]
Since the resolvents agree, so do the semigroups.  The
semigroup--resolvent formula for \(Q_t^{\bar{L}}\) and the definition of
\(S_\lambda\) give, for every
\(\varphi\in C_0(\bar{L})\),
\[
  \int_0^\infty e^{-\lambda t}H_t^{\bar{L}}\varphi\,dt
  =
  S_\lambda\varphi
  =
  R_\lambda^{\bar{L}}\varphi
  =
  \int_0^\infty e^{-\lambda t}Q_t^{\bar{L}}\varphi\,dt
  \qquad(\lambda>0).
\]
Fix \(\bar{x}\in\bar{L}\).  The scalar function
\[
  t\longmapsto
  \bigl(H_t^{\bar{L}}\varphi-Q_t^{\bar{L}}\varphi\bigr)(\bar{x})
\]
is continuous and bounded, since both semigroups are strongly
continuous contractions.  The preceding identity says that its
Laplace transform vanishes for every \(\lambda>0\).  Uniqueness of the
Laplace transform therefore implies that this function vanishes for
all \(t\geq0\).  Since \(\varphi\) and \(\bar{x}\) were arbitrary,
\[
  H_t^{\bar{L}}=Q_t^{\bar{L}}
  \qquad(t\geq0).
\]
In particular,
\[
  Q_t^{\bar{L}}\varphi(\bar{x})
  =
  \int_{\bar{L}}
  k_{\bar{L}}(t,\bar{x},\bar{y})\varphi(\bar{y})\,
  dV_{g_{\ms A,\bar{L}}}(\bar{y})
  \quad
  \bigl(\varphi\in C_0(\bar{L})\bigr).
\]

\smallskip
\noindent\emph{Positivity and transport to \(L\).}
Since \(\bar{L}\) is connected, the parabolic strong maximum principle gives
\[
  k_{\bar{L}}(t,\bar{x},\bar{y})>0
  \quad(t>0,\ \bar{x},\bar{y}\in \bar{L}).
\]

Transport the kernel to \(L\) by
\begin{equation}\label{eq:transported-leaf-kernel}
  k_L(t,x,y)
  :=
  k_{\bar{L}}
  \bigl(t,s^{-1}(x),s^{-1}(y)\bigr).
\end{equation}
Because \(s\) is an isometry, this is the smooth minimal heat kernel of
\(\partial_t-\ms A_L\), no Jacobian occurs under the change of
variables, and the strict positivity just proved transfers to \(k_L\).

\smallskip
\noindent\textbf{Step 6: conservation of heat-kernel mass and bounded
global data.}
If \(f\in C(X)\), then
\[
  Jf|_{\bar L}=f\circ\iota_L\circ s
\]
is bounded and continuous on \(\bar L\), but it need not vanish at
infinity.  The \(C_0(\bar L)\)-formula from Step~5 therefore does not
yet apply.  We first prove, by a heat-kernel argument, that the minimal
heat kernel on \(\bar L\) has total mass one.

By \eqref{eq:A-metric-drift-decomposition}, completeness of the
principal metric, the uniform lower Ricci bound, and the boundedness of
the drift,
\[
  \ms A_{\bar L}=\Delta_{g_{\ms A,\bar L}}+W,
  \quad
  |W|_{g_{\ms A,\bar L}}\leq C.
\]
Fix
\(\bar o\in\bar L\) and set
\(r(\bar y)=d_{g_{\ms A,\bar L}}(\bar o,\bar y)\).  Away from the cut
locus, Laplacian comparison and the bound for \(W\) give
\[
  \ms A_{\bar L}r^2
  =
  2r\Delta_{g_{\ms A,\bar L}}r+2
  +2r\langle W,\nabla r\rangle
  \leq C(1+r^2).
\]
The same inequality holds in the barrier sense at the cut locus.
The standard smoothed-distance lemma therefore gives a smooth proper
function \(\widetilde r\colon\bar L\to[0,\infty)\) and a constant
\(C\) such that
\[
  |\widetilde r-r|\leq1,\quad
  |\nabla\widetilde r|\leq C,\quad
  \Delta_{g_{\ms A,\bar L}}\widetilde r
  \leq C(1+\widetilde r).
\]
Set \(\rho=1+\widetilde r^{\,2}\).  Then \(\rho\) is smooth and
proper, is comparable to \(1+r^2\), and the boundedness of \(W\) gives
a constant \(C_0\) such that
\begin{equation}\label{eq:heat-kernel-exhaustion}
  \ms A_{\bar L}\rho\leq C_0\rho.
\end{equation}

To apply \eqref{eq:heat-kernel-exhaustion} to the heat equation, define
\[
  m(t,\bar x)
  :=
  \int_{\bar L}
  k_{\bar L}(t,\bar x,\bar y)\,
  dV_{g_{\ms A,\bar L}}(\bar y)
  \quad(t>0).
\]
The minimal heat kernel is obtained as the increasing limit of the
Dirichlet heat kernels on a smooth exhaustion of \(\bar L\).  The
parabolic maximum principle on each exhaustion domain therefore gives
\[
  0\leq m(t,\bar x)\leq1.
\]
Moreover,
\begin{equation}\label{eq:mass-heat-equation}
  (\partial_t-\ms A_{\bar L})m=0
  \quad\text{on }(0,\infty)\times\bar L,
  \quad
  m(t,\cdot)\longrightarrow1
  \quad\text{locally uniformly as }t\downarrow0.
\end{equation}
For clarity, the initial limit follows without assuming conservation.
If \(K\Subset\bar L\), choose a relatively compact smooth domain
\(\Omega\) containing \(K\).  If \(k_\Omega\) is its Dirichlet heat
kernel, minimality gives
\[
  m(t,\bar x)
  \geq
  \int_\Omega k_\Omega(t,\bar x,\bar y)\,
  dV_{g_{\ms A,\bar L}}(\bar y).
\]
The expression on the right tends uniformly to \(1\) on \(K\) as
\(t\downarrow0\), whereas \(m\leq1\).  This proves the second assertion
in \eqref{eq:mass-heat-equation}.  The first follows by taking the
increasing limit of the Dirichlet solutions and applying local
parabolic regularity.

Set
\[
  u(t,\bar x):=1-m(t,\bar x).
\]
Extend \(u\) to \(\{0\}\times\bar L\) by setting
\(u(0,\bar x)=0\).  Then \(0\leq u\leq1\),
\[
  (\partial_t-\ms A_{\bar L})u=0,
  \quad
  u(t,\cdot)\longrightarrow0
  \quad\text{locally uniformly as }t\downarrow0.
\]
Choose \(C_1>C_0\) and put
\[
  \Phi(t,\bar x):=e^{C_1t}\rho(\bar x).
\]
By \eqref{eq:heat-kernel-exhaustion},
\[
  (\partial_t-\ms A_{\bar L})\Phi
  =
  e^{C_1t}\bigl(C_1\rho-\ms A_{\bar L}\rho\bigr)
  \geq
  (C_1-C_0)e^{C_1t}\rho>0.
\]
Fix \(T>0\) and \(\varepsilon>0\).  Since \(\rho\) is proper and
\(u\leq1\),
\[
  u(t,\bar x)-\varepsilon\Phi(t,\bar x)
  \longrightarrow-\infty
\]
as \(\bar x\) tends to infinity, uniformly for \(0\leq t\leq T\).
At \(t=0\) this function equals
\(-\varepsilon\rho<0\).  If it had a positive maximum on
\([0,T]\times\bar L\), properness would ensure that the maximum was
attained at some \((t_0,\bar x_0)\) with \(t_0>0\).  At this point the
parabolic maximum principle would give
\[
  (\partial_t-\ms A_{\bar L})
  (u-\varepsilon\Phi)(t_0,\bar x_0)\geq0.
\]
On the other hand,
\[
  (\partial_t-\ms A_{\bar L})(u-\varepsilon\Phi)
  =
  -\varepsilon
  (\partial_t-\ms A_{\bar L})\Phi<0,
\]
a contradiction.  Hence \(u\leq\varepsilon\Phi\) on
\([0,T]\times\bar L\).  Letting \(\varepsilon\downarrow0\) gives
\(u=0\).  Since \(T\) was arbitrary, \(m\equiv1\); that is,
\begin{equation}\label{eq:copied-kernel-conservative}
  \int_{\bar L}
  k_{\bar L}(t,\bar x,\bar y)\,
  dV_{g_{\ms A,\bar L}}(\bar y)=1
  \quad(t>0).
\end{equation}

It remains to pass from \(C_0(\bar L)\) to the bounded functions copied
from \(X\).  Since \(\bar P_t\) is positive and fixes \(1\), the Riesz
representation theorem gives a probability measure
\(\bar\pi_t(\bar x,\cdot)\) on \(\bar X_L\) such that
\[
  \bar P_tF(\bar x)
  =
  \int_{\bar X_L}F\,d\bar\pi_t(\bar x,\cdot)
  \quad\bigl(F\in C(\bar X_L)\bigr).
\]
For every \(\varphi\in C_0(\bar L)\), Steps~4--5 give
\begin{align*}
  \int_{\bar X_L}I\varphi\,d\bar\pi_t(\bar x,\cdot)
  &=
  \bar P_t(I\varphi)(\bar x)\\
  &=
  Q_t^{\bar L}\varphi(\bar x)\\
  &=
  \int_{\bar L}
  k_{\bar L}(t,\bar x,\bar y)\varphi(\bar y)\,
  dV_{g_{\ms A,\bar L}}(\bar y).
\end{align*}
Because \(\bar L\) is open in \(\bar X_L\) and \(I\) is zero extension,
uniqueness in the Riesz representation theorem on the locally compact
space \(\bar L\) yields
\[
  \bar\pi_t(\bar x,\cdot)|_{\bar L}
  =
  k_{\bar L}(t,\bar x,\bar y)\,
  dV_{g_{\ms A,\bar L}}(\bar y).
\]
The right-hand side has total mass one by
\eqref{eq:copied-kernel-conservative}; hence
\(\bar\pi_t(\bar x,X)=0\).  It follows that, for every \(f\in C(X)\),
\begin{equation}\label{eq:bounded-copied-leaf-formula}
  \bar{P}_t(Jf)(\bar{x})
  =
  \int_{\bar{L}}
  k_{\bar{L}}(t,\bar{x},\bar{y})Jf(\bar{y})\,
  dV_{g_{\ms A,\bar{L}}}(\bar{y}).
\end{equation}
This proves the required extension from \(C_0(\bar L)\) to the
canonical bounded initial values coming from \(C(X)\).

Set \(\bar{x}=s^{-1}(x)\).  Each equality in the following computation
is justified by the indicated identity:
\begin{align*}
  (P_tf)\bigl(\iota_L(x)\bigr)
  &=
  \bar{P}_t(Jf)(\bar{x})
  &&\text{by \eqref{eq:pointwise-J-semigroup}},\\
  &=
  \int_{\bar{L}}
  k_{\bar{L}}(t,\bar{x},\bar{y})
  f\bigl(\iota_L(s(\bar{y}))\bigr)\,
  dV_{g_{\ms A,\bar{L}}}(\bar{y})
  &&\text{by \eqref{eq:bounded-copied-leaf-formula} and
  \eqref{eq:duplicated-leaf-embeddings}},\\
  &=
  \int_L k_L(t,x,y)f\bigl(\iota_L(y)\bigr)\,
  dV_{g_{\ms A}}(y)
  &&\text{by \(y=s(\bar{y})\),
  \eqref{eq:copied-volume-form}, and
  \eqref{eq:transported-leaf-kernel}}.
\end{align*}
Identifying \(L\) with its image under \(\iota_L\), this proves
\eqref{eq:leafwise-kernel-formula}.  Transporting
\eqref{eq:copied-kernel-conservative} through \(s\), and combining it
with the strict positivity proved in Step~5, gives
\eqref{eq:kernel-positive-conservative}.

Finally, let \(\pi_t(\iota_L(x),\cdot)\) be the transition probability
of \(P_t\) from \(\iota_L(x)\).  The formula just proved says that, for
every \(f\in C(X)\),
\[
  \int_X f\,d\pi_t(\iota_L(x),\cdot)
  =
  \int_X f\,d(\iota_L)_*
  \bigl(k_L(t,x,y)\,dV_{g_{\ms A}}(y)\bigr).
\]
Uniqueness in the Riesz representation theorem gives the last assertion
of part~\textup{(iii)}.
\end{proof}

\begin{remark}\label{rem:compactness-versus-leaves}
There are two distinct conservation statements in the proof.  The
identity \(P_t1=1\) concerns the global semigroup on the compact
foliated space \(X\); it follows from the resolvent construction in
part~\textup{(ii)}.  It does not by itself show that the minimal heat
kernel on a noncompact leaf has total mass one.  Indeed, Step~5
identifies the two semigroups only on \(C_0(\bar L)\), and the constant
function \(1\) does not belong to \(C_0(\bar L)\) when \(\bar L\) is
noncompact.

Step~6 proves the second conservation statement directly on the leaf.
Compactness of \(X\) gives uniform coefficient bounds; these imply
completeness of the principal leafwise metric, a uniform lower Ricci
bound, and a uniformly bounded drift.  Laplacian comparison then gives
a proper exhaustion \(\rho\) satisfying
\(\ms A_{\bar L}\rho\leq C\rho\), and the parabolic maximum principle
shows that
\[
  \int_{\bar L}k_{\bar L}(t,\bar x,\bar y)\,
  dV_{g_{\ms A,\bar L}}(\bar y)=1.
\]
Thus the leafwise conclusion is obtained from the geometry already
available in the foliated setting; no compactness or proper embedding
of the individual leaf is required.  Once this is known, the
\(C_0(\bar L)\)-identity extends to the bounded functions copied from
\(X\), proving that the global transition probability remains in the
initial intrinsic leaf.
\end{remark}

\subsection{The classical domain as a core}
\label{subsec:candel-core}

Let \(G\) be a closed operator on a Banach space.  A linear subspace
\(\mc C\subset\mc D(G)\) is a \emph{core} for \(G\) if it is dense in
\(\mc D(G)\) for the graph norm
\[
  \|u\|_G:=\|u\|+\|Gu\|.
\]
Equivalently, the closure of \(G|_{\mc C}\) is \(G\).  We need this
notion because infinitesimal stationarity is initially known only on
the classical leafwise domain, whereas semigroup invariance is governed
by the full generator.

\begin{lemma}\label{lem:classical-core}
The classical domain \(\mc D_0(\ms A)\) is a core for
\(\ms A_{\max}\).
\end{lemma}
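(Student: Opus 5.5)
The plan is to approximate an arbitrary element of \(\mc D(\ms A_{\max})\) in the graph norm by resolvents of \(C^{2,0}\) data. Let \(f\in\mc D(\ms A_{\max})\), fix \(\lambda>0\), and set
\[
  q:=(\lambda I-\ms A_{\max})f\in C(X),
\]
so that \(f=R_\lambda q\) by Theorem~\ref{thm:candel-diffusion}\textup{(i)}. Lemma~\ref{lem:C20-density} supplies \(q_n\in C^{2,0}(X)\) with \(q_n\to q\) uniformly. Put \(f_n:=R_\lambda q_n\). The final clause of Theorem~\ref{thm:candel-diffusion}\textup{(i)} gives \(f_n\in\mc D_0(\ms A)\), and on this domain \(\ms A_{\max}f_n=\ms Af_n\).

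It then remains to check the two components of the graph norm. First, by \eqref{eq:maximal-resolvent-estimate},
\[
  \|f_n-f\|_\infty=\|R_\lambda(q_n-q)\|_\infty\leq\lambda^{-1}\|q_n-q\|_\infty\longrightarrow0 .
\]
Second, the resolvent equation gives \(\ms A_{\max}f_n=\lambda f_n-q_n\) and \(\ms A_{\max}f=\lambda f-q\), so
\[
  \|\ms A_{\max}f_n-\ms A_{\max}f\|_\infty\leq\lambda\|f_n-f\|_\infty+\|q_n-q\|_\infty\leq2\|q_n-q\|_\infty\longrightarrow0 .
\]
Hence \(f_n\to f\) in the graph norm, so \(\mc D_0(\ms A)\) is dense in \(\mc D(\ms A_{\max})\) for that norm. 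Since \(\ms A_{\max}\) is closed by Lemma~\ref{lem:maximal-closed} and \(\mc D_0(\ms A)\subset\mc D(\ms A_{\max})\), the closure of \(\ms A_{\max}|_{\mc D_0(\ms A)}\) is exactly \(\ms A_{\max}\).

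There is no genuine obstacle left at this stage. The only substantive input is the regularity statement \(R_\lambda(C^{2,0})\subset\mc D_0(\ms A)\), which was obtained in Step~2 of the proof of parts~\textup{(i)}--\textup{(ii)} through Candel's semicontinuity argument; the present lemma just combines it with the contraction estimate and the density of \(C^{2,0}(X)\).
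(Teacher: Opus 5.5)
Your proof is correct and follows the paper's argument essentially step for step. You write \(f=R_\lambda q\), approximate \(q\) by \(C^{2,0}\) data, and use \(R_\lambda(C^{2,0}(X))\subset\mc D_0(\ms A)\), the contraction estimate, and the identity \(\ms A_{\max}f_n=\lambda f_n-q_n\) to get convergence in the graph norm.
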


\begin{proof}
Fix \(\lambda>0\) and \(u\in\mc D(\ms A_{\max})\), and put
\[
  q=(\lambda I-\ms A_{\max})u.
\]
Choose \(q_j\in C^{2,0}(X)\) with \(q_j\to q\) uniformly and define
\[
  u_j=R_\lambda q_j.
\]
Theorem~\ref{thm:candel-diffusion}\textup{(i)} gives
\(u_j\in\mc D_0(\ms A)\).  Since \(u=R_\lambda q\), the resolvent
estimate gives
\[
  \|u_j-u\|_\infty
  \leq
  \frac1\lambda\|q_j-q\|_\infty
  \longrightarrow0.
\]
Moreover,
\[
  \ms A_{\max}u_j
  =
  \ms Au_j
  =
  \lambda u_j-q_j,
\]
and therefore
\[
  \|\ms A_{\max}u_j-\ms A_{\max}u\|_\infty
  \leq
  \lambda\|u_j-u\|_\infty+\|q_j-q\|_\infty
  \longrightarrow0.
\]
Thus \(u_j\to u\) in the graph norm of \(\ms A_{\max}\).
\end{proof}

Uniform density of \(C^{2,0}(X)\) alone controls only the functions.
Resolvent approximation simultaneously controls their images under the
generator.

\subsection{Stationarity and saturation of support}
\label{subsec:candel-stationarity}

The core property relates the infinitesimal and semigroup formulations
of stationarity.  Strict positivity of the leafwise heat kernel then
determines the support of a stationary measure.

A Borel probability measure \(\nu\) on \(X\) is
\emph{stationary} for \(\{P_t\}_{t\geq0}\) if
\[
  \int_XP_tf\,d\nu=\int_Xf\,d\nu
  \quad
  \bigl(f\in C(X),\ t\geq0\bigr).
\]

\begin{proposition}
\label{prop:stationarity-equivalence}
For a Borel probability measure \(\nu\) on \(X\), the following
conditions are equivalent:
\begin{enumerate}[label=\textup{(\roman*)}]
\item
\[
  \int_X\ms Af\,d\nu=0
  \quad\text{for every }f\in\mc D_0(\ms A);
\]
\item \(\nu\) is stationary for \(\{P_t\}_{t\geq0}\).
\end{enumerate}
\end{proposition}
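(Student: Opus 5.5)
The proof will pass through the full generator \(\ms A_{\max}\) of the Feller semigroup from Theorem~\ref{thm:candel-diffusion}(ii). Condition (i) is a statement about the classical domain only. Lemma~\ref{lem:classical-core} upgrades it to \(\ms A_{\max}\), and standard semigroup calculus then connects \(\ms A_{\max}\) with invariance under \(P_t\).

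\emph{(ii)\(\Rightarrow\)(i).} Let \(f\in\mc D_0(\ms A)\subset\mc D(\ms A_{\max})\). Since \(\ms A_{\max}\) generates \(P_t\), the difference quotients \(t^{-1}(P_tf-f)\) converge uniformly to \(\ms A_{\max}f=\ms Af\) as \(t\downarrow0\). Integrating against the probability measure \(\nu\) and using stationarity, each quotient has zero integral. Uniform convergence lets us pass to the limit, so \(\int\ms Af\,d\nu=0\).

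\emph{(i)\(\Rightarrow\)(ii).} First extend (i) to every \(u\in\mc D(\ms A_{\max})\). By Lemma~\ref{lem:classical-core} there are \(u_j\in\mc D_0(\ms A)\) with \(\ms A_{\max}u_j\to\ms A_{\max}u\) uniformly, and \(\nu\) is a finite measure, so \(\int\ms A_{\max}u\,d\nu=\lim\int\ms Au_j\,d\nu=0\). Next fix \(f\in\mc D(\ms A_{\max})\) and set \(\phi(t)=\int P_tf\,d\nu\). The semigroup maps the domain into itself, and \(\frac{d}{dt}P_tf=\ms A_{\max}P_tf\) in the uniform norm, with a continuous derivative. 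Hence \(\phi\) is \(C^1\) with \(\phi'(t)=\int\ms A_{\max}(P_tf)\,d\nu=0\), and therefore \(\phi(t)=\phi(0)\). Finally, \(\mc D(\ms A_{\max})\) is dense in \(C(X)\) by Lemma~\ref{lem:maximal-closed}, and each \(P_t\) is a contraction. Both sides of \(\int P_tf\,d\nu=\int f\,d\nu\) are continuous in \(f\) for the sup norm, so the identity extends to all \(f\in C(X)\).

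There is no real obstacle, because the heavy lifting is already done: the generator identification in Theorem~\ref{thm:candel-diffusion}(ii) and the core property in Lemma~\ref{lem:classical-core}. The one point to state carefully is that \(\ms A_{\max}\) is exactly the generator of \(P_t\), rather than merely an extension of it. Without this, the domain-invariance and differentiation step in (i)\(\Rightarrow\)(ii) would not follow.
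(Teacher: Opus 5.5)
Your proposal is correct and follows the paper's proof essentially step for step. In one direction, Lemma~\ref{lem:classical-core} upgrades (i) to all of \(\mc D(\ms A_{\max})\), the derivative of \(t\mapsto\int P_tu\,d\nu\) vanishes, and density plus contractivity extends invariance to \(C(X)\); the converse passes to the limit in the uniformly convergent difference quotients \(t^{-1}(P_tf-f)\) for \(f\in\mc D_0(\ms A)\).
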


\begin{proof}
Assume \textup{(i)}.  If \(u\in\mc D(\ms A_{\max})\), choose
\(u_j\in\mc D_0(\ms A)\) such that
\[
  u_j\longrightarrow u,
  \quad
  \ms Au_j\longrightarrow\ms A_{\max}u
\]
uniformly.  Since \(\nu\) is finite,
\[
  \int_X\ms A_{\max}u\,d\nu
  =
  \lim_{j\to\infty}\int_X\ms Au_j\,d\nu
  =
  0.
\]

Because \(\ms A_{\max}\) is the generator of \(\{P_t\}\), every
\(u\in\mc D(\ms A_{\max})\) satisfies
\[
  P_tu\in\mc D(\ms A_{\max}),
  \quad
  \frac{d}{dt}P_tu=\ms A_{\max}P_tu
  \quad(t\geq0).
\]
Consequently,
\[
  \frac{d}{dt}\int_XP_tu\,d\nu
  =
  \int_X\ms A_{\max}P_tu\,d\nu=0.
\]
Hence
\[
  \int_XP_tu\,d\nu=\int_Xu\,d\nu
  \quad
  \bigl(u\in\mc D(\ms A_{\max})\bigr).
\]
The generator domain is dense in \(C(X)\).  Given \(f\in C(X)\), choose
\(u_j\in\mc D(\ms A_{\max})\) with \(u_j\to f\) uniformly.  Since
\(\|P_t\|\leq1\),
\[
  \left|
    \int_XP_tf\,d\nu-\int_Xf\,d\nu
  \right|
  \leq
  2\|f-u_j\|_\infty
  \longrightarrow0.
\]
Thus \(\nu\) is stationary.

Conversely, suppose \(\nu\) is stationary.  If
\(f\in\mc D_0(\ms A)\), then
\[
  \frac{P_tf-f}{t}\longrightarrow\ms Af
  \quad\text{uniformly as }t\downarrow0.
\]
Integrating and using stationarity proves \textup{(i)}.
\end{proof}

This is Candel's classical infinitesimal stationarity criterion (see
\cite{Candel2003}, Proposition~5.1).  The core argument above allows the
test functions to range over the full classical domain
\(\mc D_0(\ms A)\).

A subset \(E\subset X\) is \emph{leaf-saturated} if it contains the
entire leaf through each of its points.

\begin{proposition}
\label{prop:stationary-support-saturated}
The support of every stationary probability measure is leaf-saturated.
More precisely, if \(\nu\) is stationary and \(x\in\supp\nu\), then the
leaf \(L(x)\) through \(x\) is contained in \(\supp\nu\).
\end{proposition}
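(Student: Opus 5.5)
The argument is by contradiction: stationarity combined with strict positivity of the leafwise heat kernel from Theorem~\ref{thm:candel-diffusion}\textup{(iii)} should propagate mass along the leaf. Let \(\nu\) be stationary, \(x\in\supp\nu\), and suppose some \(y\in L(x)\) lies outside the closed set \(\supp\nu\). Choose an open neighbourhood \(U\) of \(y\) in \(X\) with \(\nu(U)=0\), and a function \(f\in C(X)\) with \(0\leq f\leq1\), \(f(y)=1\) and \(\supp f\subset U\). Then \(\int_Xf\,d\nu=0\), and stationarity gives
\[
  \int_XP_tf\,d\nu=0\qquad(t>0).
\]
Since \(P_tf\geq0\), the continuous function \(P_tf\) must vanish on \(\supp\nu\), and in particular \((P_tf)(x)=0\).

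The contradiction comes from the kernel formula \eqref{eq:leafwise-kernel-formula}. Fix \(t>0\), identify \(x\) and \(y\) with points of the intrinsic leaf \(L\), and write
\[
  (P_tf)(x)=\int_Lk_L(t,x,y')\,f(\iota_L(y'))\,dV_{g_{\ms A}}(y').
\]
The map \(\iota_L\) is continuous from the intrinsic topology of \(L\) to \(X\), because the intrinsic topology is finer. Hence \(\iota_L^{-1}(\{f>1/2\})\) is an intrinsically open subset of \(L\) containing \(y\), so it has positive Riemannian volume. On this set the integrand is at least \(\tfrac12k_L(t,x,\cdot)\), and \(k_L(t,x,\cdot)>0\) by \eqref{eq:kernel-positive-conservative}. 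Therefore \((P_tf)(x)>0\), which contradicts the previous paragraph. Consequently \(L(x)\subset\supp\nu\).

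The only delicate point is the passage from ``\(P_tf\) integrates to zero'' to ``\(P_tf(x)=0\) at the specific point \(x\)''. This uses two facts: \(P_tf\) is continuous (the Feller property in part~\textup{(ii)}), and a nonnegative continuous function with zero \(\nu\)-integral vanishes on \(\supp\nu\). If it were positive at a point of the support, it would be positive on a neighbourhood of that point, and that neighbourhood has positive \(\nu\)-measure. A second point to check is that the transition measure really charges the intrinsic neighbourhood of \(y\) inside the leaf, even when \(L\) is nonproper. This is exactly what the pushforward description in Theorem~\ref{thm:candel-diffusion}\textup{(iii)} provides, so no further analysis is needed.
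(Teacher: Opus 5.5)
Your proof is correct and is essentially the paper's argument, run by contradiction rather than directly. Both take an Urysohn bump \(f\) supported near \(y\) and use strict positivity of \(k_L\) to get \(P_tf(x)>0\). Both then combine continuity of \(P_tf\) near \(x\in\supp\nu\) with stationarity: the paper concludes directly that \(\nu(U)\geq c\,\nu(O)>0\), where \(O\) is a neighbourhood of \(x\) on which \(P_tf\geq c\), while you derive a contradiction from \(\nu(U)=0\).
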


\begin{proof}
Let \(y\in L(x)\), and let \(U\subset X\) be an open neighborhood of
\(y\).  Choose open sets \(V,W\subset X\) such that
\[
  y\in V\subset\overline V\subset W
  \subset\overline W\subset U.
\]
Since \(X\) is compact and metrizable, Urysohn's lemma gives
\(\varphi\in C(X)\) such that
\[
  0\leq\varphi\leq1,\quad
  \varphi=1\quad\text{on }\overline V,\quad
  \varphi=0\quad\text{on }X\setminus W.
\]
In particular, \(\supp\varphi\subset\overline W\subset U\).
The restriction of \(\varphi\) to the intrinsic leaf \(L(x)\) is
positive on a nonempty leafwise open set.  Strict positivity of the
heat kernel in
Theorem~\ref{thm:candel-diffusion}\textup{(iii)} therefore gives
\[
  P_t\varphi(x)>0
  \quad(t>0).
\]
Because \(P_t\varphi\) is continuous, there are a neighborhood
\(O\) of \(x\) and a constant \(c>0\) such that
\(P_t\varphi\geq c\) on \(O\).  Since \(x\in\supp\nu\),
\(\nu(O)>0\).  Stationarity now gives
\[
  \nu(U)
  \geq\int_X\varphi\,d\nu
  =\int_XP_t\varphi\,d\nu
  \geq c\,\nu(O)>0.
\]
Every neighborhood of \(y\) has positive measure, so
\(y\in\supp\nu\).  Since \(y\) was arbitrary,
\(L(x)\subset\supp\nu\).
\end{proof}

\begin{remark}
The proof uses the intrinsic leaf topology to propagate positivity and
the ambient topology of \(X\) to use the Feller property and define
\(\supp\nu\).  No properness of the leaf inclusion is needed.  In
particular, if \(L(x)\) is dense in \(X\) and
\(x\in\supp\nu\), then the closedness of the support forces
\(\supp\nu=X\).
\end{remark}

\appendix

\section{The \texorpdfstring{\(p\)}{p}-Laplacian rigidity theorem}
\label{app:p-rigidity}

This appendix proves Theorem~\ref{thm:p-rigidity}.  We treated the case
\(p=2\) in the main text because the quadratic defect displays the
geometry of the argument most clearly.  For general \(p\), its role is
played by the Bregman divergence associated with \(y\mapsto |y|^p\).

In the definition of \(\lambda_{1,p}(X)\), it suffices to use
nonnegative test functions.  Indeed, for \(v\in C_c^\infty(X)\), the
functions
\[
  F_\delta(v)=\sqrt{v^2+\delta^2}-\delta
\]
are smooth, nonnegative, and supported in \(\supp v\).  Moreover, they
converge to \(|v|\) in \(L^p\) and in \(p\)-energy as
\(\delta\downarrow0\), by dominated convergence and the bounds
\[
  |F_\delta(v)|\leq |v|,
  \qquad |\nabla F_\delta(v)|\leq |\nabla v|.
\]

An exact \(p\)-defect identity gives the lower bound, while equality
forces the defect to vanish.  For \(\rho=u^p\), this yields the same
linear transport equation as in the quadratic case, with coefficient
\(m-1\).  The leafwise-diffusion and support-saturation arguments
therefore apply without invoking a nonlinear \(p\)-heat flow.

\subsection{The nonlinear defect and critical sequences}

Fix \(p\in(1,\infty)\).  For two vectors \(x,y\) in the same
finite-dimensional inner-product space, define the convexity defect
\begin{equation}\label{eq:p-convexity-defect}
  \mc G_p(x,y)
  :=|x|^p+(p-1)|y|^p
    -p\langle |y|^{p-2}y,x\rangle,
\end{equation}
where \(|0|^{p-2}0\) is understood to be zero.
For \(p=2\), this reduces to
\(\mc G_2(x,y)=|x-y|^2\), the quadratic defect used in the main
argument.

\begin{lemma}\label{lem:p-uniform-convexity}
There is a constant \(c_p>0\), depending only on \(p\), such that
\(\mc G_p(x,y)\geq0\) for all \(x,y\), with equality if and only if
\(x=y\).  Moreover,
\[
  \mc G_p(x,y)\geq
  \begin{cases}
    c_p|x-y|^p,
      &p\geq2,\\[2mm]
    c_p(|x|+|y|)^{p-2}|x-y|^2,
      &1<p<2.
  \end{cases}
\]
The expression in the second line is defined to be zero when \(x=y=0\).
One may take
\[
  c_p=
  \begin{cases}
    p/4^p, & p\geq2,\\[1mm]
    p(p-1)/2, & 1<p<2.
  \end{cases}
\]
\end{lemma}

\begin{proof}
Let \(\Phi(z)=|z|^p\).  Then
\[
  \mc G_p(x,y)
  =\Phi(x)-\Phi(y)-D\Phi(y)[x-y],
\]
so the strict convexity of \(\Phi\) gives
\(\mc G_p(x,y)\geq0\), with equality precisely when \(x=y\).

Set \(e=x-y\).  There is nothing more to prove when \(e=0\), so assume
that \(e\neq0\).  Taylor's formula with integral remainder gives
\begin{equation}\label{eq:p-convexity-remainder}
  \mc G_p(x,y)
  =\int_0^1(1-t)
    D^2\Phi(y+te)[e,e]\,dt.
\end{equation}
For \(z\neq0\), decompose a vector \(v\) as
\[
  v=v_\perp+v_\parallel,
  \qquad
  v_\parallel=\frac{\langle v,z\rangle}{|z|^2}z,
  \qquad
  v_\perp\perp z.
\]
A direct computation gives
\[
  D^2\Phi(z)[v,v]
  =p|z|^{p-2}
    \bigl(|v_\perp|^2+(p-1)|v_\parallel|^2\bigr).
\]
If \(1<p<2\) and \(y+te=0\) at \(t=t_0\), the integrand in
\eqref{eq:p-convexity-remainder} grows at most like
\(|t-t_0|^{p-2}\).  Since \(p>1\), this singularity is integrable, and
we interpret \eqref{eq:p-convexity-remainder} as an improper integral.

\begin{enumerate}[label=\textup{(\roman*)}]
\item Suppose that \(p\geq2\).  Let
\(I=\{t\in[0,1]:|y+te|<|e|/4\}\).  This is an interval, and for
\(s,t\in I\),
\[
  |t-s||e|
  \leq |y+te|+|y+se|
  <\frac{|e|}{2},
\]
so \(I\) has length at most \(1/2\).  Hence
\(J=[0,3/4]\setminus I\) has measure at least \(1/4\), and every
\(t\in J\) satisfies \(1-t\geq1/4\) and
\(|y+te|\geq|e|/4\).
Since \(p-1\geq1\), the Hessian formula gives
\[
  D^2\Phi(z)[v,v]\geq p|z|^{p-2}|v|^2.
\]
Therefore \eqref{eq:p-convexity-remainder} yields
\[
  \mc G_p(x,y)
  \geq \int_J(1-t)p|y+te|^{p-2}|e|^2\,dt
  \geq \frac{p}{4^p}|e|^p.
\]

\item Suppose that \(1<p<2\).  Since \(p-1<1\), the Hessian formula
gives
\[
  D^2\Phi(z)[v,v]\geq p(p-1)|z|^{p-2}|v|^2.
\]
Moreover,
\[
  |y+te|
  =|(1-t)y+tx|
  \leq(1-t)|y|+t|x|
  \leq|x|+|y|.
\]
Since \(p-2<0\), it follows that
\[
  D^2\Phi(y+te)[e,e]
  \geq p(p-1)(|x|+|y|)^{p-2}|e|^2
\]
whenever \(y+te\neq0\).  Using the improper-integral interpretation at
a possible crossing, \eqref{eq:p-convexity-remainder} gives
\[
  \mc G_p(x,y)
  \geq\frac{p(p-1)}{2}(|x|+|y|)^{p-2}|e|^2,
\]
which proves the second estimate.
\end{enumerate}
\end{proof}

At the critical McKean scale, set
\begin{equation}\label{eq:p-critical-parameter}
  \alpha=\frac{m-1}{p}.
\end{equation}
Then \(p\alpha=m-1\), so the transport equation for the density
\(\rho=u^p\) has coefficient \(m-1\), independent of \(p\).

\begin{lemma}\label{lem:p-mckean-defect}
Let \((X^m,g)\) be a Riemannian manifold without boundary.  If
\(B\in C^2(X)\) satisfies \(|\nabla B|=1\), then every nonnegative
\(u\in C_c^\infty(X)\) satisfies the exact identity
\begin{equation}\label{eq:p-mckean-defect}
\begin{aligned}
  \int_X\bigl(|\nabla u|^p-\alpha^pu^p\bigr)\,dV
  ={}&\int_X\mc G_p(\nabla u,-\alpha u\nabla B)\,dV\\
     &+\alpha^{p-1}\int_X
       \bigl(\Delta B-(m-1)\bigr)u^p\,dV.
\end{aligned}
\end{equation}
\end{lemma}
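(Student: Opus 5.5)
Expand the defect pointwise with $x=\nabla u$ and $y=-\alpha u\nabla B$. Since $u\ge0$ and $|\nabla B|=1$, we have $|y|=\alpha u$. Where $u>0$, $|y|^{p-2}y=-\alpha^{p-1}u^{p-1}\nabla B$; where $u=0$, both sides vanish by the convention $|0|^{p-2}0=0$. Hence, everywhere,
\[
  \mc G_p(\nabla u,-\alpha u\nabla B)
  =|\nabla u|^p+(p-1)\alpha^pu^p
   +p\alpha^{p-1}u^{p-1}\langle\nabla u,\nabla B\rangle .
\]

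The last term is $\alpha^{p-1}\langle\nabla(u^p),\nabla B\rangle$. For $p>1$ the function $u^p$ is $C^1$ with compact support and $\nabla(u^p)=pu^{p-1}\nabla u$, also at zeros of $u$. Since $B\in C^2$, integrating by parts with no boundary term gives
\[
  \int_X\alpha^{p-1}\langle\nabla(u^p),\nabla B\rangle\,dV
  =-\alpha^{p-1}\int_X(\Delta B)u^p\,dV .
\]

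Integrating the pointwise identity and substituting this gives
\[
  \int_X\mc G_p\,dV
  =\int_X|\nabla u|^p\,dV+(p-1)\alpha^p\int_Xu^p\,dV
   -\alpha^{p-1}\int_X(\Delta B)u^p\,dV .
\]
Now write $\Delta B=(\Delta B-(m-1))+(m-1)$ and use $m-1=p\alpha$. Then
\[
  -\alpha^{p-1}(m-1)\int_X u^p\,dV=-p\alpha^p\int_X u^p\,dV,
\]
and $(p-1)\alpha^p-p\alpha^p=-\alpha^p$. Rearranging yields \eqref{eq:p-mckean-defect}.

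There is no serious obstacle. The only care needed is the regularity of $u^p$ and of $|y|^{p-2}y$ near the zero set of $u$ when $1<p<2$. This is handled by the convention in \eqref{eq:p-convexity-defect} together with the fact that $u^{p-1}\nabla u$ is continuous (it tends to $0$ where $u\to0$), so the integration by parts is legitimate. Alternatively, one could prove the identity for $u_\delta=u+\delta\chi$ with $\chi$ a cutoff, then let $\delta\downarrow0$, but the direct $C^1$ argument suffices.
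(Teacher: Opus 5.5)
Your proof is correct and follows the paper's argument essentially step for step. Both proofs make the same pointwise expansion of \(\mc G_p(\nabla u,-\alpha u\nabla B)\) using \(|y|=\alpha u\), integrate \(\alpha^{p-1}\langle\nabla(u^p),\nabla B\rangle\) by parts with \(u^p\in C_c^1\), and finish with the algebraic cancellation \((p-1)\alpha^p-\alpha^{p-1}(m-1)=-\alpha^p\) coming from \(m-1=p\alpha\).
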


\begin{proof}
Because \(u\geq0\), the function \(u^p\) belongs to \(C_c^1(X)\) and
\(\nabla(u^p)=pu^{p-1}\nabla u\).  Using \(|\nabla B|=1\), we obtain
the pointwise identity
\[
  \mc G_p(\nabla u,-\alpha u\nabla B)
  =|\nabla u|^p+(p-1)\alpha^pu^p
   +\alpha^{p-1}\langle\nabla B,\nabla(u^p)\rangle.
\]
Since \(u\) is compactly supported, integration by parts gives
\[
  \int_X\langle\nabla B,\nabla(u^p)\rangle\,dV
  =-\int_X(\Delta B)u^p\,dV.
\]
Consequently,
\[
\begin{aligned}
 &\int_X\mc G_p(\nabla u,-\alpha u\nabla B)\,dV
  +\alpha^{p-1}\int_X
    \bigl(\Delta B-(m-1)\bigr)u^p\,dV\\
 &\qquad
  =\int_X|\nabla u|^p\,dV
   +\bigl((p-1)\alpha^p-\alpha^{p-1}(m-1)\bigr)
      \int_Xu^p\,dV\\
 &\qquad
  =\int_X\bigl(|\nabla u|^p-\alpha^pu^p\bigr)\,dV,
\end{aligned}
\]
where the last equality uses \(m-1=p\alpha\).
\end{proof}

Assume now that \(\Delta B\geq m-1\).  Then
\eqref{eq:p-mckean-defect} expresses the spectral excess as the sum of
two nonnegative defects.  At the critical value
\(\lambda_{1,p}(X)=\alpha^p\), both defects must vanish along every
normalized minimizing sequence.  Lemma~\ref{lem:p-uniform-convexity}
turns the first vanishing defect into strong \(L^p\) control, and the
change of variables \(\rho=u^p\) converts this control into the linear
\(L^1\) transport equation used on the suspension.

\begin{proposition}\label{prop:p-critical-sequence}
Let \((X^m,g)\), \(m\geq2\), be complete, and let \(B\in C^2(X)\)
satisfy
\[
  |\nabla B|=1,
  \quad
  \Delta B\geq m-1,
\]
and suppose that
\[
  \lambda_{1,p}(X)=\alpha^p.
\]
Then there exists a nonnegative normalized minimizing sequence
\(\{u_j\}\subset C_c^\infty(X)\) satisfying
\[
  \int_Xu_j^p\,dV=1,
  \quad
  \int_X|\nabla u_j|^p\,dV\longrightarrow\alpha^p.
\]
Every such sequence has the following properties.
\begin{enumerate}[label=\textup{(\roman*)}]
\item Both defects vanish:
\begin{equation}\label{eq:p-two-defects}
\begin{split}
  \int_X\mc G_p(\nabla u_j,-\alpha u_j\nabla B)\,dV
  &\longrightarrow0,\\
  \int_X\bigl(\Delta B-(m-1)\bigr)u_j^p\,dV
  &\longrightarrow0.
\end{split}
\end{equation}
\item The first-order defect vanishes strongly in \(L^p(X)\):
\begin{equation}\label{eq:p-first-order-convergence}
  \|\nabla u_j+\alpha u_j\nabla B\|_{L^p(X)}
  \longrightarrow0.
\end{equation}
\item The probability densities \(\rho_j=u_j^p\) satisfy the
asymptotic transport equation
\begin{equation}\label{eq:p-density-transport}
  \|\nabla\rho_j+(m-1)\rho_j\nabla B\|_{L^1(X)}
  \longrightarrow0.
\end{equation}
\end{enumerate}
\end{proposition}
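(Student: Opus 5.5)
The plan has four steps: build the sequence, split its excess by Lemma~\ref{lem:p-mckean-defect}, upgrade the first defect with Lemma~\ref{lem:p-uniform-convexity}, and convert to a transport equation for \(\rho_j\).

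\emph{Existence of the sequence.} By the remark at the start of the appendix, nonnegative test functions suffice in the definition of \(\lambda_{1,p}(X)\). The smoothing \(F_\delta(v)\) gives nonnegative smooth functions, so I choose \(u_j\geq0\) in \(C_c^\infty(X)\) with \(\int u_j^p=1\) and \(\int|\nabla u_j|^p\leq\alpha^p+1/j\). One subtlety is that \(F_\delta(v)\) is smooth, since \(\sqrt{v^2+\delta^2}\) is smooth, so no further regularization is needed.

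\emph{Part (i).} Apply Lemma~\ref{lem:p-mckean-defect} to each \(u_j\). The left side is \(\varepsilon_j:=\int|\nabla u_j|^p-\alpha^p\), which satisfies \(0\leq\varepsilon_j\to0\). Both terms on the right are nonnegative, since \(\mc G_p\geq0\) and \(\Delta B\geq m-1\). Each term is therefore at most \(\varepsilon_j\), the second up to the factor \(\alpha^{1-p}\), and this gives \eqref{eq:p-two-defects}.

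\emph{Part (ii).} Put \(x=\nabla u_j\), \(y=-\alpha u_j\nabla B\) and \(e=x-y=\nabla u_j+\alpha u_j\nabla B\).
\begin{itemize}
\item For \(p\geq2\), Lemma~\ref{lem:p-uniform-convexity} gives \(\int|e|^p\leq c_p^{-1}\int\mc G_p\to0\) directly.
\item For \(1<p<2\), I use Hölder's inequality with exponents \(2/p\) and \(2/(2-p)\):
\[
  \int|e|^p
  =\int\bigl((|x|+|y|)^{p-2}|e|^2\bigr)^{p/2}(|x|+|y|)^{p(2-p)/2}
  \leq\Bigl(\int(|x|+|y|)^{p-2}|e|^2\Bigr)^{p/2}
      \Bigl(\int(|x|+|y|)^p\Bigr)^{(2-p)/2}.
\]
On the set where \(x=y=0\) we have \(e=0\), so the convention in the lemma is harmless. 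The first factor is at most \(c_p^{-1}\int\mc G_p\to0\). The second factor is bounded, because \(\|x\|_{L^p}^p\to\alpha^p\) and \(\|y\|_{L^p}=\alpha\|u_j\|_{L^p}=\alpha\), using \(|\nabla B|=1\).
\end{itemize}
This gives \eqref{eq:p-first-order-convergence}. This exponent bookkeeping for \(p<2\) is the only real obstacle.

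\emph{Part (iii).} Since \(u_j\geq0\), \(\rho_j=u_j^p\in C^1_c\) and \(\nabla\rho_j=pu_j^{p-1}\nabla u_j\). Using \(p\alpha=m-1\),
\[
  \nabla\rho_j+(m-1)\rho_j\nabla B
  =pu_j^{p-1}\bigl(\nabla u_j+\alpha u_j\nabla B\bigr).
\]
Hölder's inequality with exponents \(p/(p-1)\) and \(p\) then gives
\[
  \|\nabla\rho_j+(m-1)\rho_j\nabla B\|_{L^1}
  \leq p\|u_j\|_{L^p}^{p-1}\|e\|_{L^p}
  =p\|e\|_{L^p}\to0,
\]
which is \eqref{eq:p-density-transport}.
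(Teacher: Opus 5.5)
Your proposal is correct and follows the paper's proof essentially step for step. You use the same existence argument via the reduction to nonnegative test functions, the same splitting by Lemma~\ref{lem:p-mckean-defect}, the same H\"older argument with exponents $2/p$ and $2/(2-p)$ and weight $A_j=|\nabla u_j|+\alpha u_j$ for $1<p<2$, and the same chain-rule identity $\nabla\rho_j+(m-1)\rho_j\nabla B=pu_j^{p-1}R_j$. One small imprecision: the first H\"older factor is bounded by $\bigl(c_p^{-1}\int\mc G_p\bigr)^{p/2}$, not by $c_p^{-1}\int\mc G_p$, which of course still tends to zero.
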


\begin{proof}
Existence follows from the reduction to nonnegative test functions at
the beginning of the appendix.  Fix any such sequence.  Applying
Lemma~\ref{lem:p-mckean-defect} and using the normalization, we obtain
\[
\begin{aligned}
  \int_X|\nabla u_j|^p\,dV-\alpha^p
  ={}&\int_X\mc G_p(\nabla u_j,-\alpha u_j\nabla B)\,dV\\
     &+\alpha^{p-1}\int_X
       \bigl(\Delta B-(m-1)\bigr)u_j^p\,dV.
\end{aligned}
\]
The left-hand side tends to zero, while both terms on the right are
nonnegative.  Since \(m\geq2\), we have \(\alpha>0\); hence both defect
integrals tend to zero.  This proves \textup{(i)}.

To prove \textup{(ii)}, set
\[
  R_j=\nabla u_j+\alpha u_j\nabla B,
  \quad
  A_j=|\nabla u_j|+\alpha u_j.
\]
\begin{enumerate}[label=\textup{(\arabic*)}]
\item If \(p\geq2\), the first estimate in
Lemma~\ref{lem:p-uniform-convexity} gives
\[
  c_p\|R_j\|_{L^p(X)}^p
  \leq
  \int_X\mc G_p(\nabla u_j,-\alpha u_j\nabla B)\,dV
  \longrightarrow0.
\]

\item If \(1<p<2\), the second estimate in
Lemma~\ref{lem:p-uniform-convexity} and \textup{(i)} give
\[
  I_j:=\int_XA_j^{p-2}|R_j|^2\,dV,
  \qquad
  c_pI_j
  \leq\int_X\mc G_p(\nabla u_j,-\alpha u_j\nabla B)\,dV
  \longrightarrow0.
\]
The integrand is interpreted as zero where \(A_j=0\), since then
\(R_j=0\).  Moreover,
\[
  \int_XA_j^p\,dV
  \leq 2^{p-1}
    \left(\int_X|\nabla u_j|^p\,dV+\alpha^p\right),
\]
so the sequence \(\{A_j\}\) is bounded in \(L^p(X)\).  H\"older's
inequality, with exponents \(2/p\) and \(2/(2-p)\), now gives
\[
\begin{aligned}
  \int_X|R_j|^p\,dV
  &=\int_X
    \bigl(A_j^{p-2}|R_j|^2\bigr)^{p/2}
    A_j^{p(2-p)/2}\,dV\\
  &\leq I_j^{p/2}
    \left(\int_XA_j^p\,dV\right)^{(2-p)/2}
  \longrightarrow0.
\end{aligned}
\]
\end{enumerate}
Thus \(R_j\to0\) in \(L^p(X)\) in both cases, proving \textup{(ii)}.

To prove \textup{(iii)}, set \(\rho_j=u_j^p\).  Since \(p>1\), the map
\(s\mapsto s^p\) is \(C^1\) on \([0,\infty)\), so
\(\rho_j\in C_c^1(X)\).  The chain rule and \(p\alpha=m-1\) give
\[
  \nabla\rho_j+(m-1)\rho_j\nabla B
  =pu_j^{p-1}R_j.
\]
Hence, by H\"older's inequality and \(\|u_j\|_{L^p}=1\),
\[
  \|\nabla\rho_j+(m-1)\rho_j\nabla B\|_{L^1(X)}
  \leq p\|u_j\|_{L^p(X)}^{p-1}\|R_j\|_{L^p(X)}
  =p\|R_j\|_{L^p(X)}
  \longrightarrow0.
\]
The last convergence follows from \textup{(ii)} and proves
\textup{(iii)}.
\end{proof}

\subsection{The sharp bound and the equality case}

\begin{proof}[Proof of Theorem~\ref{thm:p-rigidity}]
\emph{The lower bound.}
Fix \(p\in(1,\infty)\) and recall that \(\alpha=(m-1)/p\).
Since \(M\) is compact, its sectional curvature is also bounded below.
Thus there is \(a\geq1\) such that
\[
  -a^2\leq\sec_{\wti g}\leq-1.
\]
Proposition~\ref{prop:busemann-comparison} therefore gives, for every
\(\xi\in\partial_\infty\wti M\),
\[
  |\nabla B_\xi|=1,
  \quad
  \Delta B_\xi\geq m-1.
\]
Fix one such \(\xi\).  For every nonnegative
\(u\in C_c^\infty(\wti M)\), Lemma~\ref{lem:p-mckean-defect} and the
nonnegativity of both defects give
\[
  \int_{\wti M}|\nabla u|^p\,dV
  \geq\alpha^p\int_{\wti M}u^p\,dV.
\]
Taking the infimum and using the reduction to nonnegative test
functions yields
\[
  \lambda_{1,p}(\wti M)\geq\alpha^p.
\]

\emph{Rigidity in the equality case.}
Suppose that \(\lambda_{1,p}(\wti M)=\alpha^p\).
Fix \(\xi_0\in\partial_\infty\wti M\) and write
\(B_0=B_{\xi_0}\).  By
Proposition~\ref{prop:p-critical-sequence}, there is a nonnegative
normalized minimizing sequence \(u_j\).  Properties \textup{(i)} and
\textup{(iii)} of that proposition, with \(\rho_j=u_j^p\), give
\[
  \int_{\wti M}
    \bigl(\Delta B_0-(m-1)\bigr)\rho_j\,dV\longrightarrow0,
  \quad
  \|\nabla\rho_j+(m-1)\rho_j\nabla B_0\|_{L^1(\wti M)}
  \longrightarrow0.
\]
Let
\[
  Z=(\wti M\times\partial_\infty\wti M)/\Gamma,
  \quad
  \iota_{\xi_0}(x)=[x,\xi_0],
\]
and define probability measures on \(Z\) by
\[
  \mu_j=(\iota_{\xi_0})_*(\rho_jdV).
\]
Since \(Z\) is compact, a subsequence converges weakly to a probability
measure \(\mu\).  Because the Busemann defect \(D\) is continuous,
\[
  \int_ZD\,d\mu
  =\lim_{j\to\infty}\int_ZD\,d\mu_j
  =\lim_{j\to\infty}\int_{\wti M}
    \bigl(\Delta B_0-(m-1)\bigr)\rho_j\,dV
  =0.
\]
Since \(D\geq0\), it follows that
\begin{equation}\label{eq:p-limit-support}
  \supp\mu\subseteq D^{-1}(0).
\end{equation}

We next show that \(\supp\mu\) is leaf-saturated.  The transport
coefficient is \(m-1\), independent of \(p\), so the relevant operator
is the same linear leafwise operator used in
Section~\ref{sec:stationarity}:
\[
  L=\Delta^{\mathrm{leaf}}
    -(m-1)\langle V,\nabla^{\mathrm{leaf}}\,\cdot\,\rangle.
\]
Thus no nonlinear \(p\)-heat operator is needed.  For
\(f\in\mc D_0(A)\),
set \(f_0=f\circ\iota_{\xi_0}\).  Since
\(\rho_j\in C_c^1(\wti M)\) and \(f_0\in C^2(\wti M)\), integration by
parts for the compactly supported vector field
\(\rho_j\nabla f_0\) gives
\[
  \int_ZLf\,d\mu_j
  =-\int_{\wti M}
    \left\langle
      \nabla\rho_j+(m-1)\rho_j\nabla B_0,
      \nabla f_0
    \right\rangle dV.
\]
By Lemma~\ref{lem:classical-gradient-control},
\(\|\nabla f_0\|_\infty<\infty\).  Therefore
\[
  \left|\int_ZLf\,d\mu_j\right|
  \leq
  \|\nabla f_0\|_\infty
  \|\nabla\rho_j+(m-1)\rho_j\nabla B_0\|_{L^1(\wti M)}
  \longrightarrow0.
\]
Since \(Lf\) is continuous on \(Z\), weak convergence also gives
\[
  \int_ZLf\,d\mu=0
  \quad\text{for every }f\in\mc D_0(A).
\]
Lemma~\ref{lem:saturated-support}\textup{(ii)}--\textup{(iii)} now gives
stationarity of \(\mu\) and leaf saturation of its support.

Since \(\mu\) is a probability measure, its support is nonempty.  Choose
\(z_*=[x_*,\xi_*]\in\supp\mu\).  Leaf saturation implies that
\([x,\xi_*]\in\supp\mu\) for every \(x\in\wti M\).  Together with
\eqref{eq:p-limit-support}, this gives
\[
  [x,\xi_*]\in\supp\mu\subseteq D^{-1}(0).
\]
By the definition of \(D\), it follows that
\[
  \Delta B_{\xi_*}=m-1
\]
throughout \(\wti M\).  Moreover,
\(|\nabla B_{\xi_*}|=1\) by
Proposition~\ref{prop:busemann-comparison}.
Since \(M\) is closed, \(\sec_{\wti g}\) also has a uniform lower bound.
Lemma~\ref{lem:riemannian-rigidity} therefore gives
\[
  (\wti M,\wti g)\cong\bH^m(-1).
\]

\emph{The hyperbolic converse.}
Suppose that \(\wti M\cong\bH^m(-1)\).  The lower bound has already
been proved, so it remains only to establish the reverse inequality.
We use the radial-cutoff argument from the quadratic case, with
\(\alpha=(m-1)/p\).  Fix \(o\in\bH^m(-1)\), set \(r=d(o,\cdot)\), and
choose \(\eta\in C^\infty(\mathbb R)\) such that
\[
  0\leq\eta\leq1,\quad
  \eta(s)=0\ \text{for }s\leq0,\quad
  \eta(s)=1\ \text{for }s\geq1.
\]
For \(R>2\), define
\[
  \chi_R(r):=\eta(r-1)\eta(R+1-r),
  \quad
  u_R:=\chi_R(r)\ee^{-\alpha r}.
\]
Because \(\chi_R\) vanishes for \(r\leq1\), the nonsmoothness of the
distance function at \(o\) causes no problem, and
\(u_R\in C_c^\infty(\bH^m(-1))\).  Moreover,
\(\chi_R=1\) on \(2\leq r\leq R\), while its two transition regions
\([1,2]\) and \([R,R+1]\) have fixed width and
\(\|\chi_R'\|_\infty\) is independent of \(R\).

In polar coordinates,
\[
  dV=\sinh^{m-1}(r)\,dr\,d\omega,
\]
and \(p\alpha=m-1\) gives
\[
  \ee^{-p\alpha r}\sinh^{m-1}(r)
  =
  2^{-(m-1)}(1-\ee^{-2r})^{m-1}
  =
  2^{-(m-1)}+O(\ee^{-2r}).
\]
Hence, with
\(C_m=\vol(\mathbb S^{m-1})2^{-(m-1)}\),
\[
  \int_{\bH^m(-1)}u_R^p\,dV=C_mR+O(1).
\]
On \(2\leq r\leq R\), one has
\(|\nabla u_R|=\alpha u_R\).  On the transition regions,
\[
  |\nabla u_R|
  \leq(\|\chi_R'\|_\infty+\alpha)\ee^{-\alpha r},
\]
so their contribution to the \(p\)-energy is \(O(1)\), uniformly in
\(R\).  Therefore
\[
  \int_{\bH^m(-1)}|\nabla u_R|^p\,dV
  =\alpha^pC_mR+O(1).
\]
The Rayleigh quotients of \(u_R\) consequently tend to \(\alpha^p\), and
\[
  \lambda_{1,p}\bigl(\bH^m(-1)\bigr)\leq\alpha^p.
\]
Together with the lower bound, this proves
\[
  \lambda_{1,p}\bigl(\bH^m(-1)\bigr)
  =\left(\frac{m-1}{p}\right)^p
  \quad(1<p<\infty),
\]
and completes the proof.
\end{proof}

\begin{remark}\label{rem:p-spectrum-busemann}
A pointwise lower bound for \(\Delta B\) directly yields a lower bound
for the bottom of the \(p\)-spectrum.  More precisely, suppose that
\(X\) carries a function \(B\in C^2(X)\) such that \(|\nabla B|=1\)
and \(\Delta B\geq h>0\).  Applying
Lemma~\ref{lem:p-mckean-defect}, with \(m-1\) replaced by \(h\), gives
\[
  \lambda_{1,p}(X)\geq\left(\frac hp\right)^p.
\]
For the Busemann functions in Theorem~\ref{thm:p-rigidity},
Hessian comparison gives \(\Delta B_\xi\geq m-1\), and hence recovers
the McKean lower bound.
\end{remark}

\end{document}